\newcommand{\N}{{\mathbb N}}
\newcommand{\R}{{\mathbb R}}
\newcommand{\E}{\mathbb{E}\,}
\DeclareMathAlphabet\mathbfcal{OMS}{cmsy}{b}{n}
\mathchardef\mhyphen="2D 
\newtheorem{Thm}{Theorem}
\newtheorem{Lem}[Thm]{Lemma}
\newtheorem{Cor}[Thm]{Corollary}
\newtheorem{Rem}[Thm]{Remark}
\newtheorem{Exa}{Example}
\newtheorem{defin}{Definition}
\newtheorem{prop}{Proposition}
\begin{document}

\title{On Popoviciu's concept of convexity for functions of $d$ variables}

\author{Andrzej Komisarski}
\address[Andrzej Komisarski]{Faculty of Mathematics and Computer Science, University of Lodz, Banacha 22, 90--238 Łódź, Poland}
\email{\color{blue}andkom@math.uni.lodz.pl}

\author{Teresa Rajba}
\address[Teresa Rajba]{Department of Mathematics, University of Bielsko-Biala, ul.\ Wil\-lo\-wa~2, 43-309 Bielsko-Bia{\l}a, Poland}
\email{\color{blue}trajba@ubb.edu.pl}

\begin{abstract}
{We establish an integral representation for Popoviciu’s convex functions of $d$ variables. This representation serves as a~foundation for deriving several functional inequalities, analogous to those well-known for usual convex functions. Our results generalize and extend the results obtained by S.~Gal, C.~Niculescu, B.~Gavrea, T.~Popoviciu, and others, who considered only differentiable functions of two variables. In contrast to other authors, we do not impose any additional regularity assumptions on the studied functions.}

\keywords{divided differences \and inequalities related to stochastic convex orderings \and box-convex functions \and Hermite-Hadamard and Jensen inequalities \and functional inequalities related to convexity}
\subjclass{26D05 \and 39B62 \and 60E15}
\end{abstract}

\maketitle
\section{Introduction}\label{intro}
H. Hopf \cite{Hopf1926} and T. Popoviciu \cite{Popoviciu1934,Popoviciu1944} introduced the notion of higher order convex functions based on the so called divided differences. Let $n\geq1 $. 
Let  $I$ be a subinterval of $\R$. Given a function $f\colon I\to\mathbb R$ of one real variable and a system $x_0, x_1, \ldots, x_{n+1}$ of pairwise distinct points of $I$ the \emph{divided differences} of order $0,1, \ldots, n+1$ are respectively defined by the formulas 
\begin{align*}
\left[ x_0;f \right]&=f(x_0),\\
\left[ x_0, x_1;f \right]&=\frac{f(x_1)-f(x_0)}{x_1-x_0},\quad\ldots,\\
\left[ x_0, x_1,\ldots,x_{n+1};f \right]&=\frac{\left[ x_1, x_2,\ldots, x_{n+1};f \right]-\left[ x_0, x_1,\ldots, x_{n};f \right]}{x_{n+1}-x_0}.
\end{align*}
The function $f(x)$ is called \emph{$n$-convex (resp. $n$-concave, $n$-affine)}, if $[ x_0, x_1,\ldots,$ $x_{n+1};f]\geq 0$ (resp. $\leq0$, $=0$) for any pairwise distinct points $x_0, x_1,\ldots,x_{n+1}$. The function $f$ is convex when all divided differences of order two are nonnegative for all systems of~pairwise distinct points. If $f^{(n+1)} $ exists, then $f$ is \emph{$n$-convex ($n$-concave)} if, and only if, $f^{(n+1)} \geq 0$ ($f^{(n+1)} \leq 0$).
\begin{prop}[\cite{Popoviciu1934}]\label{prop:1.1}
A function $f(x)$ is $n$-convex if and only if its derivative $f^{(n-1)}(x)$  exists and is convex (with the convention $f^{(0)}(x)=f(x)$).
\end{prop}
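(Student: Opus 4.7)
The plan is to prove both implications of the biconditional: sufficiency via a Hermite-Genocchi representation combined with a mollification argument, and necessity by induction on $n$ coupled with a regularity bootstrap.

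For the \emph{sufficiency} $(\Leftarrow)$, suppose $g = f^{(n-1)}$ exists and is convex on $I$. In the smooth sub-case where additionally $g \in C^2$, one has $g'' = f^{(n+1)} \geq 0$, and the Hermite-Genocchi identity
\[
[x_0, x_1, \ldots, x_{n+1}; f] = \int_{\Delta_{n+1}} f^{(n+1)}\Bigl(\sum_{i=0}^{n+1} t_i x_i\Bigr) \, d\sigma(t),
\]
where $\Delta_{n+1}$ is the standard simplex, immediately shows the divided difference is nonnegative. For general convex $g$, I would mollify: convolve $g$ with a smooth nonnegative bump $\varphi_\epsilon$ of unit mass and shrinking support to obtain $g_\epsilon \in C^\infty$, which is again convex (convolution with a probability density preserves convexity) and converges to $g$ locally uniformly on the interior of $I$. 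Integrating $g_\epsilon$ successively $n-1$ times with initial data matching $f, f', \ldots, f^{(n-2)}$ at a fixed interior base point gives $f_\epsilon$ with $f_\epsilon^{(n-1)} = g_\epsilon$ and $f_\epsilon \to f$ locally uniformly. Since divided differences are fixed linear combinations of function values at the chosen points, they commute with this pointwise limit, yielding $[x_0, \ldots, x_{n+1}; f] \geq 0$.

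For the \emph{necessity} $(\Rightarrow)$, I would induct on $n$. The base $n = 1$ is just the definition of convexity. For the inductive step, the real content is the regularity claim: an $n$-convex function with $n \geq 2$ is differentiable on the interior of $I$, and $f'$ is $(n-1)$-convex. Granted this, the inductive hypothesis applied to $f'$ yields that $(f')^{(n-2)} = f^{(n-1)}$ exists and is convex. The regularity claim itself rests on the fact that $n$-convexity forces the map $x \mapsto [x, x_1, \ldots, x_n; f]$ to be monotone in $x$ when the remaining $n$ variables are fixed. Expanding this divided difference explicitly as a linear combination of values of $f$, and applying this monotonicity in two separate slots simultaneously, controls the difference quotients of $f$ finely enough to produce matching left and right derivatives.

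The main obstacle is precisely this regularity bootstrap in the necessity direction: upgrading a purely combinatorial nonnegativity condition on order-$(n+1)$ divided differences to the existence of a classical $(n-1)$-st derivative without any \emph{a priori} smoothness on $f$. The mollification argument for sufficiency and the inductive machinery for necessity are routine; the delicate part is ruling out jumps in the lower-order derivatives using only the $n$-convexity hypothesis.
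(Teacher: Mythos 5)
First, a point of comparison: the paper does not prove Proposition~\ref{prop:1.1} at all --- it is quoted from Popoviciu's 1934 memoir --- so there is no in-paper argument to measure yours against. Your sufficiency direction is essentially complete and correct: mollify the convex function $g=f^{(n-1)}$, integrate back up $n-1$ times from a fixed interior base point, apply Hermite--Genocchi to the smooth approximants, and pass to the limit in the finite linear combination \eqref{eq:expanded}. The only details worth recording are that $g$, being convex, is continuous on the interior of $I$, so the iterated fundamental theorem of calculus really does recover $f$ from $g$ and the initial data; and that one should work on compact subintervals of the interior so that the mollification and the endpoint behaviour cause no trouble.

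The gap is in the necessity direction, and it sits exactly where you locate it --- but locating the difficulty is not the same as resolving it. The entire content of that implication is the regularity claim that an $n$-convex function with $n\geq2$ has a genuine \emph{two-sided} derivative on the interior of $I$ (note that $2$-convexity must rule out examples like $x_+$, whose derivative would fail to exist at $0$); your one sentence about ``applying this monotonicity in two separate slots simultaneously'' is a gesture, not an argument. What is actually needed is the classical bootstrap. From $[x',x_1,\dots,x_n;f]-[x,x_1,\dots,x_n;f]=(x'-x)\,[x,x',x_1,\dots,x_n;f]$ and $n$-convexity one gets that order-$n$ divided differences are nondecreasing in each slot, hence locally bounded on compact subsets of the interior; this makes the order-$(n-1)$ divided differences locally Lipschitz in each slot, hence locally bounded; descending, the order-$2$ divided differences are locally bounded by some $M$, whence $h\mapsto[x_0,x_0+h;f]$ satisfies $\bigl|[x_0,x_0+h;f]-[x_0,x_0+h';f]\bigr|=|h-h'|\cdot\bigl|[x_0,x_0+h,x_0+h';f]\bigr|\leq M|h-h'|$ on a punctured neighbourhood of $0$, and the Cauchy criterion yields a single limit of the difference quotient from both sides, i.e.\ $f'(x_0)$ exists. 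Only after this does your induction close: $f'$ is then $(n-1)$-convex by the limit formula of Lemma~\ref{lem:pochodna} applied to $n+2$ points (each summand on the right is a limit of nonnegative order-$(n+1)$ divided differences), and the inductive hypothesis applies to $f'$. As written, your proposal assumes the theorem's hardest step rather than proving it.
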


If $f=f(x,y)$ is a function defined on a rectangle $I \times J$ and $x_0, x_1,\ldots,x_{m}$ are pairwise distinct points of $I$ and $y_0,y_1,\ldots,y_n$ are pairwise distinct points of $J$, one defines the \emph{divided double difference} of order $(m,n)$ by the formula 
\begin{align}
\left[\LARGE{\substack{{x_0,\quad x_1, \quad \ldots \quad x_m}\\\\{y_0, \quad y_1, \quad \ldots \quad y_n}}};f \right]&= \left[ x_0, x_1,\ldots, x_{m};\left[ y_0, y_1,\ldots,y_{n};f(x, \cdot) \right]\right]\\
&= \left[  y_0, y_1,\ldots,y_{n};\left[ x_0, x_1,\ldots, x_{m};f(\cdot,y) \right]\right]
\end{align}

Drawing a parallel to the one dimensional case, T. Popoviciu \cite{Popoviciu1934}, p.78, has called a function $f\colon I \times J  \to\mathbb R$ \emph{convex of order $(m,n)$} (\emph{box-$(m,n)$-convex} in our terminology) if all divided differences 
\begin{equation*}
\left[\LARGE{\substack{{x_0,\quad x_1, \quad \ldots \quad x_m}\\\\{y_0, \quad y_1, \quad \ldots \quad y_n}}};f \right]
\end{equation*}
are nonnegative for all pairwise distinct points $x_0, x_1,\ldots, x_{m}$  and $y_0, y_1,\ldots, y_n$. 
The related notions of \emph{box-$(m,n)$-concave} function and \emph{box-$(m,n)$-affine} fun\-cti\-on can be introduced in the standard way.

In \cite{KomRaj2023}, we obtained the integral representation, the Ra\c{s}a, Jensen and Hermite-Hadamard inequalities for box-$(m,n) $-convex functions.

Let  $\textbf{n}=(n_1,\ldots,n_d)$, $d\geq 1$, $n_1,\ldots,n_d\in\mathbb N$. In this paper, we define and study  box-$\textbf{n}$-convex functions of $d$ real variables, such that in the case $d=2$, $n_1=m $, $n_2=n $, the class of box-$\textbf{n}$-convex functions coincides with the class of  box-$(m,n)$-convex functions. We give the integral representation and several characterizations of box-$\textbf{n}$-convex fun\-cti\-ons. Using the integral representation, we obtain the Ra\c{s}a, Jensen and Hermite-Hadamard inequalities for box-$\textbf{n} $-convex functions.

The results known in the literature regarding box-convex functions used differential methods to study these functions. They required assumptions about high regularity (the existence of derivatives of high orders).
In this article, we do not make any assumptions about regularity of the studied functions (they may even be non-measurable). However, we still use differential methods. This is possible thanks to the notion of $\mathbf n$-regularity introduced in Section 5 and the regularization of box-$\mathbf n$-convex functions, based on Proposition~\ref{prop:regularize}.
We first used this idea in \cite{KomRaj2023} for functions of two variables. However, the general case (with $d>2$) is much more complicated and introduces some phenomena that do not occur for $d=2$.

In Section 2, we recall the selected properties of divided differences of order $n$ given in \cite{KomRaj2023}.

In Section 3, we give the definitions and selected properties of multiple divided differences of order $n$.

In Section 4, we prove that the set of box-$\textbf{n}$-affine  functions coincides with the set of pseudopolynomials of degree  $(n_1-1, \ldots, n_d-1)$.

In Section 5, we introduce the notion of $\textbf{n}$-regularity of functions and we give properties of $\textbf{n}$-regular functions.

In Section 6, we study properties of integration and differentation  of $\textbf{n}$-regular functions.

In Section 7, we obtain the integral representation of box-monotone function, i.e. box-$(1,\ldots, 1)$-convex functions. 

In Section 8, we obtain the integral representation of box-$\textbf{n}$-convex function.

In Section 9, we give properties of box-$\textbf{n}$-convex orders.

In Section 10, we obtain the Hermite-Hadamard, Jensen and Ra\c{s}a inequali\-ties for box-$\textbf{n}$-convex functions.

Throughout this article, we assume that $\mathbb N=\{0,1,2,\dots\}$ unless otherwise stated.

\section{Selected properties of divided differences of order $n$}

The following expanded form of divided difference is well known.
\begin{prop}\label{prop:1.2}
\begin{equation}\label{eq:expanded}
\left[x_0,x_1,\ldots,x_n;f \right]
=\sum_{j=0}^n\frac{f(x_j)}{\prod\limits_{\substack{l=0\\l\neq j}}^n(x_j-x_l)}.
\end{equation}
\end{prop}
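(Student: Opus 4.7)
The natural approach is induction on $n$. The base case $n=0$ gives $[x_0;f]=f(x_0)$, matching the right-hand side with the usual convention that the empty product equals $1$; alternatively one can verify $n=1$ directly from the definition $[x_0,x_1;f] = \frac{f(x_1)-f(x_0)}{x_1-x_0} = \frac{f(x_0)}{x_0-x_1}+\frac{f(x_1)}{x_1-x_0}$.

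For the inductive step, I would assume the formula holds for all divided differences of $n$ points and apply it to both pieces appearing in the recursive definition
\[
\left[x_0,x_1,\ldots,x_n;f\right]=\frac{[x_1,\ldots,x_n;f]-[x_0,\ldots,x_{n-1};f]}{x_n-x_0}.
\]
The expansion on the right splits naturally by which of the $f(x_j)$ appears. The terms $f(x_0)$ (only from the second sum) and $f(x_n)$ (only from the first sum) immediately produce the correct coefficients $\frac{1}{\prod_{l\neq 0}(x_0-x_l)}$ and $\frac{1}{\prod_{l\neq n}(x_n-x_l)}$ after absorbing the factor $1/(x_n-x_0)$.

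The only part that requires a short calculation is the coefficient of $f(x_j)$ for an intermediate index $1\le j\le n-1$. Writing $P_j=\prod_{\substack{l=1\\ l\neq j}}^{n-1}(x_j-x_l)$, these two terms combine to
\[
\frac{1}{x_n-x_0}\cdot\frac{1}{P_j}\left(\frac{1}{x_j-x_n}-\frac{1}{x_j-x_0}\right)
=\frac{1}{P_j}\cdot\frac{1}{(x_j-x_0)(x_j-x_n)}
=\frac{1}{\prod_{\substack{l=0\\ l\neq j}}^n(x_j-x_l)},
\]
where the factor $x_n-x_0$ cancels between the prefactor and the numerator of the combined fraction. This is the only nontrivial manipulation; the rest is bookkeeping. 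There is no real obstacle, since the identity is essentially algebraic and the induction closes on the nose once this cancellation is noted.
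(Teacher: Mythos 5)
Your proof is correct: the base case, the splitting of the two inductive expansions by which $f(x_j)$ appears, and the cancellation of the factor $x_n-x_0$ in the coefficient of $f(x_j)$ for $1\le j\le n-1$ are all handled properly. The paper states this expanded form as well known and gives no proof, so there is nothing to compare against; your induction is the standard argument one would supply.
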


\begin{Lem}
\label{lem:il.podz.}
Let $n\geq1$, let $I\subset \mathbb R$ be an~interval, $\alpha\in I$ and let $f\colon I\to\mathbb R$ be an~integrable function.
For pairwise distinct points $x_0,x_1,\dots,x_n\in I$, we have
\begin{equation*}
\left[x_0,x_1,\dots,x_n;\int_\alpha^\cdot f(t)dt\right]=\int_0^1t^{n-1}[x_{1,t},\dots,x_{n,t};f]dt,
\end{equation*}
where $x_{i,t}=tx_i+(1-t)x_0$ for $t\in[0,1]$ and $i=1,2,\dots,n$.
\end{Lem}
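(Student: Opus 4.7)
The plan is to apply the expanded form of the divided difference (Proposition~\ref{prop:1.2}) on both sides of the claimed identity and match terms, without invoking any differentiability of $F(x):=\int_\alpha^x f(t)\,dt$ beyond $F(x_j)-F(x_0)=\int_{x_0}^{x_j}f(s)\,ds$. Induction on $n$ is a tempting alternative, but the recursive definition of $[x_0,\dots,x_n;F]$ does not interact cleanly with the asymmetric role played by $x_0$ in the substitution $x_{i,t}=tx_i+(1-t)x_0$, so the direct approach is cleaner.

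First I would rewrite the left-hand side. By Proposition~\ref{prop:1.2},
$$\left[x_0,\dots,x_n;F\right]=\sum_{j=0}^n\frac{F(x_j)}{\prod_{l\neq j}(x_j-x_l)}.$$
Write $F(x_j)=F(x_0)+\int_{x_0}^{x_j}f(s)\,ds$. The constant contribution $F(x_0)$ drops out because divided differences of order $n\geq 1$ of a constant vanish (an identity that follows from Proposition~\ref{prop:1.2} itself). The $j=0$ term also vanishes since $\int_{x_0}^{x_0}f=0$. In each surviving integral perform the linear substitution $s=tx_j+(1-t)x_0=x_{j,t}$, giving $ds=(x_j-x_0)\,dt$ and absorbing the factor $(x_j-x_0)$ into the denominator $\prod_{l\neq j}(x_j-x_l)$. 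This yields
$$[x_0,\dots,x_n;F]=\sum_{j=1}^n\frac{\int_0^1 f(x_{j,t})\,dt}{\prod_{l\in\{0,\dots,n\}\setminus\{0,j\}}(x_j-x_l)}.$$

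Next I would expand the right-hand side in the same way. Applying Proposition~\ref{prop:1.2} to the divided difference of order $n-1$ at the points $x_{1,t},\dots,x_{n,t}$, and using the crucial identity $x_{j,t}-x_{l,t}=t(x_j-x_l)$, I pull out a factor $t^{n-1}$ from the denominator:
$$[x_{1,t},\dots,x_{n,t};f]=\frac{1}{t^{n-1}}\sum_{j=1}^n\frac{f(x_{j,t})}{\prod_{l\in\{1,\dots,n\}\setminus\{j\}}(x_j-x_l)}.$$
Multiplying by $t^{n-1}$ cancels the singular factor (so integrability in $t$ is automatic), and integrating in $t$ reproduces exactly the sum obtained for the left-hand side.

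The main obstacle I anticipate is ensuring the Fubini-type exchange of summation and integration is legitimate and, more importantly, handling the $t^{n-1}/t^{n-1}$ cancellation transparently so no artificial singularity at $t=0$ is introduced; once one writes both sides through Proposition~\ref{prop:1.2}, this cancellation is algebraic and does not require any regularity of $f$ (only Lebesgue integrability, which is already assumed and ensures each $\int_0^1 f(x_{j,t})\,dt$ is finite for pairwise distinct $x_j$). Everything else reduces to bookkeeping of the two matching sums.
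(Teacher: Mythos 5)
Your proof is correct, but it follows a genuinely different route from the paper. The paper argues by induction on $n$: the base case $n=1$ is the substitution $s=x_{1,t}$ in $\frac{1}{x_1-x_0}\int_{x_0}^{x_1}f$, and the induction step applies the recursive definition of the divided difference after permuting $x_0$ and $x_1$ (so that the two order-$n$ differences appearing in the recursion both contain $x_0$ and hence both fall under the induction hypothesis), then uses $x_{n+1,t}-x_{1,t}=t(x_{n+1}-x_1)$ to upgrade $t^{n-1}$ to $t^n$. You instead compute both sides in closed form via Proposition~\ref{prop:1.2}: on the left you strip off the constant $F(x_0)$ (using $\sum_j \prod_{l\neq j}(x_j-x_l)^{-1}=0$ for $n\geq1$) and substitute $s=x_{j,t}$ in each term, letting the Jacobian $(x_j-x_0)$ cancel the $l=0$ factor of the denominator; on the right the homogeneity $x_{j,t}-x_{l,t}=t(x_j-x_l)$ extracts exactly $t^{n-1}$, and the two finite sums coincide term by term. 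Your computation is self-contained and makes the role of the weight $t^{n-1}$ completely transparent (it is precisely the Jacobian of the dilation of the node system toward $x_0$), at the cost of invoking the partial-fraction identity for constants; the paper's induction is shorter to write but leans on the permutation-invariance of divided differences. One cosmetic remark: the divided difference $[x_{1,t},\dots,x_{n,t};f]$ is undefined at $t=0$ for $n\geq2$ since all nodes collapse to $x_0$; this is harmless (a null set, and after multiplication by $t^{n-1}$ your closed form extends continuously), but it is worth stating explicitly that the identity with the integrand is meant for $t\in(0,1]$. The interchange of the finite sum with $\int_0^1$ needs no Fubini argument, and integrability of each $\int_0^1 f(x_{j,t})\,dt$ follows from the integrability of $f$ on the segment $[x_0,x_j]$, exactly as you say.
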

\begin{proof}
The proof of the lemma is by induction on $n$. For $n=1$ we have
$$\left[x_0,x_1;\int_\alpha^\cdot f(t)dt\right]=\frac{\int_{x_0}^{x_1} f(t)dt}{x_1-x_0}=\int_0^1f(x_{1,t})dt=\int_0^1t^{1-1}[x_{1,t};f]dt.$$
In the induction step, we use the definition of the divided difference and the fact that it does not depend on the permutation of the points $x_0,\dots,x_{n+1}$.
\begin{multline*}
\left[x_0,x_1,\dots,x_{n+1};\int_\alpha^\cdot f(t)dt\right]
=\left[x_1,x_0,x_2,\dots,x_{n+1};\int_\alpha^\cdot f(t)dt\right]\\
=\frac{\left[x_0,x_2,\dots,x_{n+1};\int_\alpha^\cdot f(t)dt\right]-\left[x_0,x_1,\dots,x_n;\int_\alpha^\cdot f(t)dt\right]}{x_{n+1}-x_1}\\
=\int_0^1t^{n-1}\frac{[x_{2,t},\dots,x_{n+1,t};f]-[x_{1,t},\dots,x_{n,t};f]}{x_{n+1}-x_1}dt\\
=\int_0^1t^n\frac{[x_{2,t},\dots,x_{n+1,t};f]-[x_{1,t},\dots,x_{n,t};f]}{x_{n+1,t}-x_{1,t}}dt\\
=\int_0^1t^{(n+1)-1}[x_{1,t},\dots,x_{n+1,t};f]dt.
\end{multline*}
\end{proof}

\begin{Lem}
\label{lem:pochodna}
Let $n\geq2$, let $I\subset\mathbb R$ be an interval, and let $f\colon I\to\mathbb R$ be a right-differentiable function.
Let $f'_R\colon I\setminus\{\sup I\}\to\mathbb R$ be a~right-derivative of $f$.
For pairwise distinct points $x_1,\dots,x_n\in I\setminus\{\sup I\}$ and $k=1,\dots,n$, the limit $\lim_{x_0\downarrow x_k}[x_0,x_1,\dots,x_n;f]$ exists and
\begin{equation}\label{eq:poch}
\lim_{x_0\downarrow x_k}[x_0,x_1,\dots,x_n;f]
=\frac{f'_R(x_k)}{\prod\limits_{\substack{i=1\\i\neq k}}^n(x_k-x_i)}+\sum_{\substack{j=1\\j\neq k}}^n\frac1{x_j-x_k}\left(\frac{f(x_j)}{\prod\limits_{\substack{i=1\\i\neq j}}^n(x_j-x_i)}+\frac{f(x_k)}{\prod\limits_{\substack{i=1\\i\neq k}}^n(x_k-x_i)}\right).
\end{equation}
Moreover,
$$[x_1,\dots,x_n;f'_R]=
\sum_{k=1}^n\lim_{x_0\downarrow x_k}[x_0,x_1,\dots,x_n;f].$$
\end{Lem}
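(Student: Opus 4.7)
The plan is to start from the Cauchy-style expanded form of the divided difference given in Proposition~\ref{prop:1.2}, namely
$[x_0,x_1,\dots,x_n;f]=\sum_{j=0}^n f(x_j)/\prod_{l\neq j}(x_j-x_l)$.
As $x_0\downarrow x_k$, only two summands become singular (those for $j=0$ and $j=k$); every other summand has an obvious finite limit obtained by formally substituting $x_0=x_k$. I would isolate the pair of singular terms, factor out $1/(x_0-x_k)$, and rewrite their sum as the right-difference quotient
$\tfrac{1}{x_0-x_k}\bigl(f(x_0)/P(x_0)-f(x_k)/P(x_k)\bigr)$,
where $P(x)=\prod_{l=1,\,l\neq k}^{n}(x-x_l)$. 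Since $P$ is a polynomial with $P(x_k)\neq 0$, the quotient rule for right-derivatives applies, giving $(f/P)'_R(x_k)=\bigl(f'_R(x_k)P(x_k)-f(x_k)P'(x_k)\bigr)/P(x_k)^2$. Combining this with the elementary identity $P'(x_k)/P(x_k)=\sum_{j\neq k}(x_k-x_j)^{-1}$ and adding the limits of the remaining non-singular terms, one regroups the pieces to recover precisely formula~\eqref{eq:poch}.

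For the second identity, I would sum \eqref{eq:poch} over $k=1,\dots,n$. The diagonal part $\sum_{k=1}^{n} f'_R(x_k)/\prod_{l\neq k}(x_k-x_l)$ is, by Proposition~\ref{prop:1.2} applied now to $f'_R$, exactly $[x_1,\dots,x_n;f'_R]$. The remaining double sum splits into two pieces $A$ and $B$ according to whether the numerator involves $f(x_j)$ or $f(x_k)$; relabeling $j\leftrightarrow k$ in $B$ converts $1/(x_j-x_k)$ into $1/(x_k-x_j)=-1/(x_j-x_k)$, so $B=-A$ and the entire contribution vanishes, leaving only $[x_1,\dots,x_n;f'_R]$.

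The only delicate point is justifying that the right-derivative of $f/P$ at $x_k$ exists and admits the quotient-rule expression when $f$ is assumed merely right-differentiable. This is routine, because $P$ is smooth and $P(x_k)\neq 0$, so the standard elementary proof of the quotient rule goes through with one-sided limits without modification. All other steps are symbolic bookkeeping with the Cauchy expansion together with a symmetry/cancellation argument.
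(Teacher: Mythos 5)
Your proof is correct, and for the first identity it takes a genuinely different (and more explicit) route than the paper. The paper reduces to the case $k=1$ by symmetry and then declares the verification of \eqref{eq:poch} to be ``an easy proof by induction on $n$'' via the recursive definition of the divided difference, without writing it out. You instead work directly from the expanded form \eqref{eq:expanded}: the two singular summands ($j=0$ and $j=k$) combine into the one-sided difference quotient of $f/P$ with $P(x)=\prod_{l\neq k}(x-x_l)$, the quotient rule for right-derivatives (valid because $P$ is a polynomial with $P(x_k)\neq0$) gives $(f/P)'_R(x_k)=f'_R(x_k)/P(x_k)-f(x_k)P'(x_k)/P(x_k)^2$, and the logarithmic-derivative identity $P'(x_k)/P(x_k)=\sum_{j\neq k}(x_k-x_j)^{-1}$ regroups everything into \eqref{eq:poch}; I checked the bookkeeping and it matches term by term, including the limits of the non-singular summands. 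Your argument has the advantage of being a single closed computation that simultaneously proves existence of the limit and the formula for every $k$, whereas the paper's induction would lean on the recursion and a separate symmetry reduction. For the ``Moreover'' part the two arguments coincide: summing \eqref{eq:poch} over $k$, the off-diagonal double sum vanishes by the antisymmetry of $(c_j+c_k)/(x_j-x_k)$ under $j\leftrightarrow k$ (your $B=-A$ relabelling), and the surviving diagonal is $[x_1,\dots,x_n;f'_R]$ by Proposition~\ref{prop:1.2} applied to $f'_R$, which is exactly the chain of equalities the paper displays.
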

\begin{proof}
Due to symmetry, it is enough to show that \eqref{eq:poch} holds for $k=1$. We skip an easy proof by induction on $n$.
Using \eqref{eq:poch}, we obtain $$\sum_{k=1}^n\lim_{x_0\downarrow x_k}[x_0,x_1,\dots,x_n;f]=\sum_{k=1}^n\frac{f'_R(x_k)}{\prod\limits_{\substack{i=1\\i\neq k}}^n(x_k-x_i)}=[x_1,\dots,x_n;f'_R].$$
\end{proof}

\section{Divided differences of order $(n_1,\ldots, n_d)$ and box-$(n_1,\ldots, n_d)$-convex functions}

Let $d\in\mathbb N$. For every $A\subset\{1,2,\dots,d\}$ let $A'=\{1,2,\dots,d\}\setminus A$ be the complement of the set $A$.

For $\mathbf x\in\mathbb R^d$, $\mathbf n\in\mathbb N^d$ and $A\subset\{1,2,\dots,d\}$ let $\mathbf x_A=(x_{a_1},x_{a_2},\dots,x_{a_{|A|}})$, and $\mathbf n_A=(n_{a_1},n_{a_2},\dots,n_{a_{|A|}})$, where $(a_1,a_2,\dots,a_{|A|})$ is the ordered sequence of the elements of $A$ (i.e., $A=\{a_1,a_2,\dots,a_{|A|}\}$ and $a_1<a_2<\dots<a_{|A|}$). In particular $\mathbf x_{\{i\}}=(x_i)$ and $\mathbf x_{\{i\}'}=(x_1,\dots,x_{i-1},$ $x_{i+1},\dots,x_d)$.

Let $I_1,I_2,\dots,I_d\subset\mathbb R$ be intervals (bounded or unbounded). We denote $\mathbf I=\prod_{i=1}^dI_i\subset\mathbb R^d$. If $A\subset\{1,2,\dots,d\}$, then we put $\mathbf I_A=\prod_{i\in A}I_i\subset\mathbb R^{|A|}$.
Clearly, if $\mathbf x\in\mathbf I$, then $\mathbf x_A\in\mathbf I_A$.

For $f\colon\mathbf I\to\mathbb R$, $A\subset\{1,2,\dots,d\}$ and $\mathbf z\in\mathbf I_{A'}$ we denote by $f_A^{\mathbf z}=f_A\colon\mathbf I_A\to\mathbb R$ the function given by
$$f_A^{\mathbf z}(\mathbf y)=f(\mathbf x),$$
where
$$x_i=
\begin{cases}
y_j&\text{if $i\in A$ and } a_j=i,\\
z_j&\text{if $i\in A'$ and }a'_j=i,
\end{cases}$$
$(a_1,a_2,\dots,a_{|A|})$ is the ordered sequence of the elements of $A$, and $(a'_1,a'_2,$ $\dots,a'_{|A'|})$ is the ordered sequence of the elements of $A'$
(i.e., $\mathbf x$ is the unique element of $\mathbf I$ satisfying $\mathbf x_A=\mathbf y$ and $\mathbf x_{A'}=\mathbf z$).
In particular, $f_A^{\mathbf x_{A'}}(\mathbf x_A)=f(\mathbf x)$.
The idea behind the notation $f_A^\cdot(\cdot)$ is to reduce the number of variables of $f$ by treating some of them as parameters (those that are standing at the positions described by the set $A'$).

It is convenient to consider the special case $d=0$. Then $f\colon\mathbf I\to\mathbb R$ is a~constant (a function of $0$ variables).


\begin{defin}\label{def:multdif}
Let $f\colon\mathbf I\to\mathbb R$ and $\mathbf n=(n_1,n_2,\dots,n_d)\in\mathbb N^d$. For $i=1,2,\dots,d$ let $\mathbf x_i=(x_{i0},x_{i1},\dots,x_{in_i})\in I_i^{n_i+1}$ be a vector with pairwise distinct coordina\-tes. We inductively define the multiple divided difference of order $\mathbf n$ as follows.

If $d=0$ (i.e. $f$ is a constant), then $[;f]=f$.

If $d=1$, then $[\mathbf x_1;f]=[x_{10},x_{11},\dots,x_{1n_1};f]$ is the divided difference defined in the Introduction (Section~\ref{intro}).

If $d>1$, then
$$\left[\ \LARGE{\substack{\mathbf x_1\\\\\mathbf x_2\\\\\dots\\\\\mathbf x_d}};f\right]=\left[\LARGE{\substack{x_{10},\ x_{11},\ldots,\ x_{1n_1}\\\\x_{20},\ x_{21},\ldots,\ x_{2n_2}\\\\\dots\\\\x_{d0},\ x_{d1},\ldots,\ x_{dn_d}}};f\right]=
[x_{d0},x_{d1},\ldots,\ x_{dn_d};g],$$
where $g\colon I_d\to\mathbb R$ is given by
$$g(x)=\left[\LARGE{\substack{x_{10},\ x_{11},\ldots,\ x_{1n_1}\\\\x_{20},\ x_{21},\ldots,\ x_{2n_2}\\\\\dots\\\\x_{d-1,0},\ x_{d-1,1},\ldots,\ x_{d-1,n_{d-1}}}};f_{\{1,2,\dots,d-1\}}^x\right].$$
\end{defin}

In other words, we apply the divided differences to successive arguments of $f$.

In the following proposition, we give the expanded form of divided difference of order $\textbf{n}$, which is a generalization of \eqref{eq:expanded}.
\begin{prop}\label{prop:prop5}
If $d>0$, then
\begin{equation}\label{eq:prop5}
\left[\LARGE{\substack{x_{10},\ x_{11},\ldots,\ x_{1n_1}\\\\x_{20},\ x_{21},\ldots,\ x_{2n_2}\\\\\dots\\\\x_{d0},\ x_{d1},\ldots,\ x_{dn_d}}};f\right]=\sum_{j_1=0}^{n_1}\sum_{j_2=0}^{n_2}\dots\sum_{j_d=0}^{n_d}\frac{f(x_{1j_1},x_{2j_2},\dots,x_{dj_d})}{\prod\limits_{i=1}^d\prod\limits_{\substack{l_i=0\\l_i\neq j_i}}^{n_i}(x_{ij_i}-x_{il_i})}.
\end{equation}
\end{prop}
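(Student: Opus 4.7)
The plan is to prove Proposition~\ref{prop:prop5} by induction on $d$, using Proposition~\ref{prop:1.2} (the one-dimensional expanded form) together with the recursive Definition~\ref{def:multdif}.

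\textbf{Base case.} When $d=1$, the formula reduces to
$$[x_{10},x_{11},\dots,x_{1n_1};f]=\sum_{j_1=0}^{n_1}\frac{f(x_{1j_1})}{\prod\limits_{\substack{l_1=0\\l_1\ne j_1}}^{n_1}(x_{1j_1}-x_{1l_1})},$$
which is exactly Proposition~\ref{prop:1.2}.

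\textbf{Inductive step.} Assume the formula holds for $d-1\ge 1$. By Definition~\ref{def:multdif},
$$\left[\LARGE{\substack{\mathbf x_1\\\\\dots\\\\\mathbf x_d}};f\right]=[x_{d0},x_{d1},\dots,x_{dn_d};g],\qquad g(x)=\left[\LARGE{\substack{\mathbf x_1\\\\\dots\\\\\mathbf x_{d-1}}};f_{\{1,\dots,d-1\}}^{x}\right].$$
I first apply Proposition~\ref{prop:1.2} in the single variable $x_d$ to rewrite the outer divided difference as
$$[x_{d0},\dots,x_{dn_d};g]=\sum_{j_d=0}^{n_d}\frac{g(x_{dj_d})}{\prod\limits_{\substack{l_d=0\\l_d\ne j_d}}^{n_d}(x_{dj_d}-x_{dl_d})}.$$
Then I invoke the inductive hypothesis on each $g(x_{dj_d})$, applied to the $(d-1)$-variable function $f_{\{1,\dots,d-1\}}^{x_{dj_d}}$, to obtain
$$g(x_{dj_d})=\sum_{j_1=0}^{n_1}\cdots\sum_{j_{d-1}=0}^{n_{d-1}}\frac{f(x_{1j_1},\dots,x_{d-1,j_{d-1}},x_{dj_d})}{\prod\limits_{i=1}^{d-1}\prod\limits_{\substack{l_i=0\\l_i\ne j_i}}^{n_i}(x_{ij_i}-x_{il_i})}.$$
Substituting this into the previous display and merging the $j_d$ summation with the other $d-1$ sums, while combining the single product in the denominator with the double product from the inductive hypothesis into one double product over $i=1,\dots,d$, yields precisely \eqref{eq:prop5}.

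\textbf{Main obstacle.} There is no conceptual difficulty here; the argument is a routine unwinding of the recursive definition. The only thing to watch is notational bookkeeping: one must verify that the denominator after combining the two steps is exactly $\prod_{i=1}^d\prod_{l_i\ne j_i}(x_{ij_i}-x_{il_i})$, and that each cross-section $f_{\{1,\dots,d-1\}}^{x_{dj_d}}$ has the form required to apply the induction hypothesis with the pairwise distinct nodes $\mathbf x_1,\dots,\mathbf x_{d-1}$.
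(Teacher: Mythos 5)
Your proof is correct and is essentially the same as the paper's own argument: the paper also proceeds by induction on the number of coordinates, combining Proposition~\ref{prop:1.2} with the recursive Definition~\ref{def:multdif} (the paper only sketches this, inducting on $m=1,\dots,d$ over the cross-sections $f_{\{1,\dots,m\}}^{\mathbf y}$, which is the same computation organized from the inside out). Your version, peeling off the last coordinate and invoking the inductive hypothesis on each $f_{\{1,\dots,d-1\}}^{x_{dj_d}}$, is a valid and slightly more explicit rendering of that sketch.
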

\begin{proof}[Sketch of the proof]
By induction on $m=1,2,\dots,d$ and taking into account  Proposition~\ref{prop:1.2}, we obtain
$$
\left[\LARGE{\substack{x_{10},\ x_{11},\ldots,\ x_{1n_1}\\\\x_{20},\ x_{21},\ldots,\ x_{2n_2}\\\\\dots\\\\x_{m0},\ x_{m1},\ldots,\ x_{mn_m}}};g\right]=\sum_{j_1=0}^{n_1}\sum_{j_2=0}^{n_2}\dots\sum_{j_m=0}^{n_m}\frac{g(x_{1j_1},x_{2j_2},\dots,x_{mj_m})}{\prod\limits_{i=1}^m\prod\limits_{\substack{l_i=0\\l_i\neq j_i}}^{n_i}(x_{ij_i}-x_{il_i})},
$$
for every function $g\colon\mathbf I_{\{1,2,\dots,m\}}\to\mathbb R$ such that
$g=f_{\{1,2,\dots,m\}}^{\mathbf y}$ for some $\mathbf y\in I_{\{m+1,\dots,d\}}$. For $m=d$ we obtain \eqref{eq:prop5}.
\end{proof}

By symmetry of formula \eqref{eq:prop5}, we see that the order in which the divided differences are applied in Definition~\ref{def:multdif} is not important
(similarly, as in the two-dimensional case, cf. \cite{GalNic2019,KomRaj2023,Popoviciu1934}). In particular, the following observation holds.

\begin{Rem}\label{rem:rem1}
If $A\subset\{1,2,\dots,d\}$, $A=\{a_1,a_2,\dots,a_{|A|}\}$,  $a_1<a_2<\dots<a_{|A|}$, and $A'=\{a'_1,a'_2,\dots,a'_{|A'|}\}$,  $a'_1<a'_2<\dots<a'_{|A'|}$, then
$$\left[\ \LARGE{\substack{\mathbf x_1\\\\\mathbf x_2\\\\\dots\\\\\mathbf x_d}};f\right]=
\left[\ \LARGE{\substack{\mathbf x_{a'_1}\\\\\mathbf x_{a'_2}\\\\\dots\\\\\mathbf x_{a'_{|A'|}}}};g\right],$$
where $g\colon\mathbf I_{A'}\to\mathbb R$ is given by
$$g(\mathbf x)=
\left[\LARGE{\substack{\mathbf x_{a_1}\\\\\mathbf x_{a_2}\\\\\dots\\\\\mathbf x_{a_{|A|}}}};f_A^{\mathbf x}\right].$$
\end{Rem}
An immediate consequence of Remark \ref{rem:rem1} is the following observation.

\begin{Rem}\label{rem:rem2}
Let $A\subset\{1,2,\dots,d\}$, $A=\{a_1,a_2,\dots,a_{|A|}\}$,  $a_1<a_2<\dots<a_{|A|}$, and $A'=\{a'_1,a'_2,\dots,a'_{|A'|}\}$,  $a'_1<a'_2<\dots<a'_{|A'|}$.
Let $f\colon\mathbf I\to\mathbb R$ be the function given by the formula $f(\mathbf x)=g(\mathbf x_A)\cdot h(\mathbf x_{A'})$, where $g\colon\mathbf I_A\to\mathbb R$ and $h\colon\mathbf I_{A'}\to\mathbb R$. Then
$$\left[\ \LARGE{\substack{\mathbf x_1\\\\\mathbf x_2\\\\\dots\\\\\mathbf x_d}};f\right]=
\left[\ \LARGE{\substack{\mathbf x_{a_1}\\\\\mathbf x_{a_2}\\\\\dots\\\\\mathbf x_{a_{|A|}}}};g\right]\cdot\left[\ \LARGE{\substack{\mathbf x_{a'_1}\\\\\mathbf x_{a'_2}\\\\\dots\\\\\mathbf x_{a'_{|A'|}}}};h\right].$$
\end{Rem}

\begin{defin}
Let $\mathbf n\in\mathbb N^d$. We say that the function $f\colon\mathbf I\to\mathbb R$ is \emph{box-$\mathbf n$-convex} if
\begin{equation}\label{eq:boxdef}
\left[\ \LARGE{\substack{\mathbf x_1\\\\\mathbf x_2\\\\\dots\\\\\mathbf x_d}};f\right]\geq0
\end{equation}
for every $\mathbf x_1,\mathbf x_2,\dots,\mathbf x_d$, where $\mathbf x_i=(x_{i0},x_{i1},\dots,x_{in_i})\in I_i^{n_i+1}$ is a vector with pairwise distinct coordinates ($i=1,2,\dots,d$).

We say that $f$ is \emph{box-$\mathbf n$-concave} if it satisfies \eqref{eq:boxdef} with $\geq$ replaced by $\leq$.

We say that $f$ is \emph{box-$\mathbf n$-affine} if \eqref{eq:boxdef} becomes the equality.
\end{defin}

\begin{Rem}
Observe that for every $\mathbf n\in\mathbb N^d$ the sets of all box-$\mathbf n$-convex functions and all box-$\mathbf n$-concave functions are convex cones in the space of all real functions on $\mathbf I$.
The set of all box-$\mathbf n$-affine functions is a~linear subspace of that space.
\end{Rem}

\begin{Rem}\label{rem:onedim}
The case $d=2$ was investigated in \cite{KomRaj2023}.

If $f\colon I\to\mathbb R$ is a function of one variable ($d=1$), then $f$ is box-$(n)$-convex if and only if it is $(n-1)$-convex in the classical sense.
In particular, box-$(0)$-convexity of a function $f$ means that $f$ is non-negative.
Box-$(1)$-convexity of $f$ means that $f$ is non-decreasing.
Box-$(2)$-convexity of a function is equivalent to its convexity.
If $f\colon I\to\mathbb R$ is box-$(2)$-convex and $I\subset\mathbb R$ is open, then $f$ is continuous and right-differentiable. For $n>2$ and open $I$ every box-$(n)$-convex function has a derivative of order $n-2$, which is a convex function.
\end{Rem}

By Remark \ref{rem:rem2} we obtain the following remark.
\begin{Rem}\label{rem:rem4}
Let $\mathbf n\in\mathbb N$ and $A\subset\{1,2,\dots,d\}$.
Let $f\colon\mathbf I\to\mathbb R$ be the function given by the formula $f(\mathbf x)=g(\mathbf x_A)\cdot h(\mathbf x_{A'})$, where $g\colon\mathbf I_A\to\mathbb R$  and $h\colon\mathbf I_{A'}\to\mathbb R$. Then
\begin{itemize}
\item[(a)]
if $g$ is box-$\mathbf n_A$-convex and $h$ is box-$\mathbf n_{A'}$-convex, then $f$ is box-$\mathbf n$-convex.
\item[(b)]
if $g$ is a linear combination of box-$\mathbf n_A$-convex functions and $h$ is a linear combination of box-$\mathbf n_{A'}$-convex functions, then $f$ is a linear combination of box-$\mathbf n$-convex functions.
\item[(c)]
if $g$ is box-$\mathbf n_A$-affine or $h$ is box-$\mathbf n_{A'}$-affine, then $f$ is box-$\mathbf n$-affine.
\end{itemize}
\end{Rem}

\section{Pseudo-polynomials and box-$\mathbf n$-affine functions}

\begin{defin}
Let $\mathbf n=(n_1,n_2,\dots,n_d)\in\{-1,0,\dots\}^d$. We say that the function $W\colon\mathbf I\to\mathbb R$ is a \emph{pseudo-polynomial of degree} $\mathbf n$ if it is of the form
$$W(x_1,x_2,\dots,x_d)=\sum_{i=1}^d\sum_{k=0}^{n_i}A_{ik}(\mathbf x_{\{i\}'})x_i^k,$$
where $\mathbf x=(x_1,x_2,\dots,x_d)$, and $A_{ik}\colon\mathbf I_{\{i\}'}\to\mathbb R$ ($i=1,2,\dots,d$ and $k=0,1,\dots,n_i$) are arbitrary functions (as usually, $\sum_{k=0}^{-1}A_{ik}(\mathbf x_{\{i\}'})x_i^k=0$ if $n_i=-1$).
\end{defin}

\begin{Exa}
Let $d=3 $ and $\mathbf n=(n_1,n_2,n_3)\in\mathbb N^3$. Then, the function $W\colon I_1 \times I_2 \times I_3\to\mathbb R$ is a \emph{pseudo-polynomial} of degree $\mathbf n$ if it is of the form
$$W(x_1,x_2,x_3)=\sum_{k=0}^{n_1}A_{1k}(x_2,x_3)x_1^k
+\sum_{k=0}^{n_2}A_{2k}(x_1,x_3)x_2^k
+\sum_{k=0}^{n_3}A_{3k}(x_1,x_2)x_3^k,$$
where $A_{1k}\colon I_2 \times I_3\to\mathbb R$  
 $(k=0,1,\dots,n_1)$, $A_{2k}\colon I_1 \times I_3\to\mathbb R$ $(k=0,1,\dots,n_2)$ and $A_{3k}\colon I_1 \times I_2\to\mathbb R$ 
 $(k=0,1,\dots,n_3)$ are arbitrary functions.
\end{Exa}

\begin{Rem}
Note that, in general, the degree of a pseudo-polynomial is not uniquely determined. For example $W(x,y)=e^xy+e^yx+x^2y^2$ is a pseudo-polynomial of both degrees $(1,2)$ and $(2,1)$, but it is not a pseudo-polynomial of degree $(1,1)$.
\end{Rem}

\begin{Lem}\label{lem:pseudo1}
Let $f\colon\mathbf I\to\mathbb R$. We fix $i\in\{1,2,\dots,d\}$. Let $n_i\in\mathbb N$, and let $u_{i1}, u_{i2},\dots,u_{in_i}$ be pairwise distinct elements of $I_i$.
There exists a pseudo-polynomial $W(x_1,x_2,\dots,x_d)=\sum_{k=0}^{n_i-1}A_{ik}(\mathbf x_{\{i\}'})x_i^k$ such that
$$f(x_1,x_2,\dots,x_d)=W(x_1,x_2,\dots,x_d),$$ whenever $x_i\in\{u_{i1}, u_{i2},\dots,u_{in_i}\}$.

Moreover, for each $k=0,1,\dots,n_i-1$, the function
$A_{ik}$ is a linear combina\-tion of the functions $f_{\{i\}'}^{u_{i1}}$, $f_{\{i\}'}^{u_{i2}}$, \dots, $f_{\{i\}'}^{u_{in_i}}$, where
$$
f_{\{i\}'}^{u_{ij}}(x_1,\dots,x_{i-1},x_{i+1},\dots,x_d)=f(x_1,\dots,x_{i-1},u_{ij},x_{i+1},\dots,x_d)
$$
for $j=1,2,\dots,n_i$.
\end{Lem}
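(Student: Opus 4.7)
The plan is to interpret the problem as univariate Lagrange interpolation in the variable $x_i$, performed pointwise in the remaining variables $\mathbf{x}_{\{i\}'}$, which play the role of parameters. For each fixed $\mathbf{z}\in\mathbf{I}_{\{i\}'}$, the one-variable function $t\mapsto f_{\{i\}'}^{\mathbf{z}}(t)$ on $I_i$ takes prescribed values at the $n_i$ pairwise distinct nodes $u_{i1},\ldots,u_{in_i}$, and a polynomial of degree at most $n_i-1$ in $t$ matching these values is unique — it is the classical Lagrange interpolant.

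First, I would introduce the Lagrange basis polynomials
$$L_j(t)=\prod_{\substack{l=1\\l\neq j}}^{n_i}\frac{t-u_{il}}{u_{ij}-u_{il}},\qquad j=1,2,\ldots,n_i,$$
which are polynomials of degree $n_i-1$ in the single variable $t$ whose coefficients depend only on the fixed nodes $u_{i1},\ldots,u_{in_i}$ (in particular, they do not involve $\mathbf{x}_{\{i\}'}$), and which satisfy $L_j(u_{il})=\delta_{jl}$.

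Next, I would define
$$W(\mathbf{x})=\sum_{j=1}^{n_i}f_{\{i\}'}^{u_{ij}}(\mathbf{x}_{\{i\}'})\,L_j(x_i).$$
For any $\mathbf{x}\in\mathbf{I}$ with $x_i=u_{im}$, the interpolating property of $L_j$ yields $W(\mathbf{x})=f_{\{i\}'}^{u_{im}}(\mathbf{x}_{\{i\}'})=f(\mathbf{x})$, which is precisely the required matching condition. Finally, I would expand each Lagrange basis polynomial as $L_j(t)=\sum_{k=0}^{n_i-1}c_{jk}t^k$ for scalar constants $c_{jk}$ independent of $\mathbf{x}_{\{i\}'}$, and exchange summation orders to obtain
$$W(\mathbf{x})=\sum_{k=0}^{n_i-1}\left(\sum_{j=1}^{n_i}c_{jk}f_{\{i\}'}^{u_{ij}}(\mathbf{x}_{\{i\}'})\right)x_i^k,$$
so that setting $A_{ik}(\mathbf{x}_{\{i\}'})=\sum_{j=1}^{n_i}c_{jk}f_{\{i\}'}^{u_{ij}}(\mathbf{x}_{\{i\}'})$ gives both the pseudo-polynomial representation of the stated form and the assertion that each $A_{ik}$ is a linear combination of the section functions $f_{\{i\}'}^{u_{i1}},\ldots,f_{\{i\}'}^{u_{in_i}}$.

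There is no genuine obstacle here; the argument is standard Lagrange interpolation executed with parameters. The only step requiring care is bookkeeping: the notation $f_{\{i\}'}^{u_{ij}}$ for the restriction of $f$ obtained by fixing the $i$-th coordinate to $u_{ij}$ is exactly what converts the interpolant's coefficients into functions of the remaining variables, and one must simply respect the positional conventions set up at the beginning of Section 3.
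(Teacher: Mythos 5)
Your proposal is correct and follows essentially the same route as the paper: the paper's proof also takes, for each fixed $\mathbf y\in\mathbf I_{\{i\}'}$, the Lagrange interpolation polynomial of $f_{\{i\}}^{\mathbf y}$ at the nodes $u_{i1},\dots,u_{in_i}$ and observes that its coefficients are linear combinations of the values $f_{\{i\}'}^{u_{ij}}(\mathbf y)$. You merely make the Lagrange basis expansion explicit, which the paper leaves implicit.
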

\begin{proof}
We fix $i\in\{1,2,\dots,d\}$. For every $\mathbf y\in\mathbf I_{\{i\}'} $, let $ A_{ik}(\mathbf y)\in\mathbb R$ $(k=0,1,\ldots, n_i-1)$ be such that $L(x)=L_{\mathbf y}(x)=\sum_{k=0}^{n_i-1}A_{ik}(\mathbf y)x^k$ is the Lagrange interpolation polynomial of the function $f_{\{i\}}^{\mathbf y}$ for $n_i$ nodes $u_{i1}, u_{i2},\dots,u_{in_i}$. Then $L_{\mathbf y}(x)=f(y_1,\dots,y_{i-1}$, $x$, $y_{i+1},\dots,y_d)$ for $x\in\{u_{i1}, u_{i2},\dots,u_{in_i}\}$, and the coefficients $A_{ik}(\mathbf y)$ are linear combinations of the values
$f_{\{i\}}^{\mathbf y}(u_{i1})=f_{\{i\}'}^{u_{i1}}(\mathbf y),\dots,f_{\{i\}}^{\mathbf y}(u_{in_i})=f_{\{i\}'}^{u_{in_i}}(\mathbf y)$. We put $$W(x_1,x_2,\dots,x_d)=L_{\mathbf x_{\{i\}'}}(x_i)=\sum_{k=0}^{n_i-1}A_{ik}(\mathbf x_{\{i\}'})x_i^k.$$
The lemma is proved.
\end{proof}

\begin{Lem}\label{lem:pseudo2}
Let $f\colon\mathbf I\to\mathbb R$, $\mathbf n=(n_1,n_2,\dots,n_d)\in\mathbb N^d$. For each $i=1,2,\dots,d$, let $u_{i1}, u_{i2},\dots,u_{in_i}$ be pairwise distinct elements of $I_i$.
There exists a pseudo-polynomial $W$ of degree $(n_1-1,n_2-1,\dots,n_d-1)$, such that
\begin{equation}\label{eq:lem1}
    f(x_1,\dots,x_{i-1},u_{ij}, x_{i+1},\dots,x_d)=W(x_1,\dots,x_{i-1},u_{ij}, x_{i+1},\dots,x_d)
\end{equation}
for each $i=1,2,\dots,d$
and $j=1,2,\dots,n_i$.
\end{Lem}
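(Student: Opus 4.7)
The plan is to construct $W$ as a telescoping sum $W = W_1 + W_2 + \cdots + W_d$, where each $W_\ell$ comes from applying Lemma~\ref{lem:pseudo1} along the $\ell$-th coordinate to a suitably corrected version of $f$. Note first that any expression of the form $\sum_{k=0}^{n_\ell-1} A_{\ell k}(\mathbf{x}_{\{\ell\}'}) x_\ell^k$ qualifies as a pseudo-polynomial of degree $(n_1-1, \ldots, n_d-1)$ (set the other coefficients in the definition to zero), and a sum of such expressions over $\ell$ is again a pseudo-polynomial of that degree. Hence once the $W_\ell$ are produced, the degree bound on $W$ is immediate.

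For the construction, apply Lemma~\ref{lem:pseudo1} with $i=1$ to $f$ to obtain $W_1 = \sum_{k=0}^{n_1-1} A_{1k}(\mathbf{x}_{\{1\}'}) x_1^k$, which agrees with $f$ whenever $x_1 \in \{u_{11}, \ldots, u_{1n_1}\}$. Then, recursively for $\ell = 2, 3, \ldots, d$, set $g_{\ell-1} = f - (W_1 + \cdots + W_{\ell-1})$ and apply Lemma~\ref{lem:pseudo1} to $g_{\ell-1}$ with $i = \ell$ to produce $W_\ell = \sum_{k=0}^{n_\ell-1} A_{\ell k}(\mathbf{x}_{\{\ell\}'}) x_\ell^k$ agreeing with $g_{\ell-1}$ whenever $x_\ell \in \{u_{\ell 1}, \ldots, u_{\ell n_\ell}\}$. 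Finally set $W = W_1 + \cdots + W_d$.

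The critical point, and the one requiring the ``moreover'' clause of Lemma~\ref{lem:pseudo1}, is that adding $W_\ell$ must not destroy any equality already achieved. By that clause, the coefficient functions $A_{\ell k}$ are linear combinations of the slices $(g_{\ell-1})_{\{\ell\}'}^{u_{\ell j}}$ for $j = 1, \ldots, n_\ell$. If $i < \ell$ and $x_i = u_{i j_i}$, then by construction $g_{\ell-1}$ already vanishes at every point with $x_i = u_{i j_i}$, so each slice $(g_{\ell-1})_{\{\ell\}'}^{u_{\ell j}}$ vanishes at $\mathbf{x}_{\{\ell\}'}$, hence $A_{\ell k}(\mathbf{x}_{\{\ell\}'}) = 0$ and consequently $W_\ell = 0$ there. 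Thus the partial sum $W_1 + \cdots + W_\ell$ agrees with $f$ on every hyperplane $\{x_i = u_{ij}\}$ with $i \leq \ell$, and after $d$ stages $W$ satisfies~\eqref{eq:lem1} for every admissible $(i,j)$.

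The main obstacle is precisely this consistency issue: a naive sum of one-coordinate Lagrange corrections could easily spoil earlier agreements. What makes the argument go through is the explicit linear-combination description of the Lagrange coefficients furnished by the ``moreover'' clause of Lemma~\ref{lem:pseudo1}, which converts the vanishing of $g_{\ell-1}$ on previously handled hyperplanes into the vanishing of the new correction $W_\ell$ there. Everything else in the proof is bookkeeping.
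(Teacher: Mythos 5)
Your proof is correct and follows essentially the same route as the paper: the same telescoping recursion $g_{\ell}=f-\sum_{j<\ell}W_j$ with Lemma~\ref{lem:pseudo1} applied coordinate by coordinate. In fact you make explicit the consistency verification (via the ``moreover'' clause) that the paper's own proof leaves implicit, which is exactly the right justification for why later corrections do not spoil earlier agreements.
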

\begin{proof}
We will define functions $g_i\colon\mathbf I\to\mathbb R$ and pseudo-polynomials $W_i\colon\mathbf I\to\mathbb R$ (for $i=1,2,\dots,d$) by induction.
First we put $g_i=f-\sum_{j=1}^{i-1}W_j$. Next, the pseudo-polynomial
$$W_i(x_1,x_2,\dots,x_d)=\sum_{k=0}^{n_i-1}A_{ik}(\mathbf x_{\{i\}'})x_i^k$$
such that $g_i(x_1,x_2,\dots,x_d)=W_i(x_1,x_2,\dots,x_d)$, whenever $x_i\in\{u_{i1},u_{i2},\dots$, $u_{in_i}\}$, is obtained using Lemma~\ref{lem:pseudo1}.

Taking the pseudo-polynomial $W=\sum_{i=1}^{d} W_i$, we obtain that equation \eqref{eq:lem1} is satisfied. The lemma is proved.
\end{proof}

\begin{Lem}\label{lem:pseudo3}
Let $\mathbf n=(n_1,n_2,\dots,n_d)\in\mathbb N^d$. If
$W\colon\mathbf I\to\mathbb R$ is a pseudo-poly\-no\-mial of degree $(n_1-1,n_2-1,\dots,n_d-1)$, then $W$ is box-$\mathbf n$-affine.
\end{Lem}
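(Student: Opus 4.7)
The plan is to reduce the statement to the classical one-variable fact that the $n_i$-th order divided difference of a polynomial of degree less than $n_i$ vanishes. Two ingredients make this reduction work: (i) the box-$\mathbf{n}$-affine functions form a linear subspace (noted just after Definition of box-$\mathbf{n}$-convex), so it suffices to verify the claim for each summand of $W$ individually; and (ii) each summand is a product of a function of $\mathbf{x}_{\{i\}'}$ and a function of $x_i$ alone, so Remark \ref{rem:rem4}(c) applies.

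More concretely, I would first decompose $W$ as a linear combination of terms of the form
$$f_{i,k}(\mathbf{x}) = A_{ik}(\mathbf{x}_{\{i\}'}) \cdot x_i^k, \qquad i=1,\dots,d,\ k=0,1,\dots,n_i-1,$$
and show each $f_{i,k}$ is box-$\mathbf{n}$-affine. Fixing $i$ and $k$, I set $A=\{i\}'$, $g = A_{ik} \colon \mathbf{I}_A \to \mathbb{R}$ and $h\colon I_i \to \mathbb{R}$, $h(x_i)=x_i^k$, so that $f_{i,k}(\mathbf{x}) = g(\mathbf{x}_A)\cdot h(\mathbf{x}_{A'})$. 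Here $\mathbf{n}_{A'}=(n_i)$, and by Remark \ref{rem:rem4}(c) it is enough to prove that the monomial $h$ is box-$(n_i)$-affine.

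For this last, purely one-variable step, I would apply the expanded formula \eqref{eq:expanded}: for any $n_i+1$ pairwise distinct points $x_{i0},\dots,x_{in_i}\in I_i$,
$$[x_{i0},x_{i1},\dots,x_{in_i};h] = \sum_{j=0}^{n_i}\frac{x_{ij}^k}{\prod\limits_{\substack{l=0\\ l\neq j}}^{n_i}(x_{ij}-x_{il})} = 0$$
whenever $k \le n_i-1$; this is the standard observation that the $n_i$-th divided difference annihilates polynomials of degree less than $n_i$ (equivalent, via Remark \ref{rem:onedim}, to the fact that a degree-$k$ polynomial is $(n_i-1)$-affine in the classical sense whenever $k\le n_i-1$). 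Hence $h$ is box-$(n_i)$-affine, $f_{i,k}$ is box-$\mathbf{n}$-affine by Remark \ref{rem:rem4}(c), and summing over $i,k$ finishes the proof.

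There is no real obstacle in this argument; it is a clean three-line reduction: linearity, Remark \ref{rem:rem4}(c), and the classical one-dimensional vanishing. The only point requiring any care is to make sure the degree bound $k\le n_i-1$ is strictly less than the order $n_i$ of the one-dimensional divided difference being taken in the $i$-th block — which is exactly the reason the pseudo-polynomial degree is shifted by $-1$ in each coordinate.
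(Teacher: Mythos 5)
Your proposal is correct and follows essentially the same route as the paper: decompose $W$ into the summands $A_{ik}(\mathbf x_{\{i\}'})x_i^k$, invoke Remark~\ref{rem:rem4}(c) with the monomial $x_i\mapsto x_i^k$ as the box-$(n_i)$-affine factor, and conclude by linearity. The only difference is that you spell out, via the expanded formula \eqref{eq:expanded}, why the $n_i$-th divided difference annihilates $x_i^k$ for $k\leq n_i-1$, a fact the paper simply takes as known.
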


\begin{proof}
Let $W\colon\mathbf I\to\mathbb R$ be a pseudo-polynomial of degree $(n_1-1,n_2-1,\dots,n_d-1)$ of the form
$$W(\mathbf x)=\sum_{i=1}^d\sum_{k=0}^{n_i-1}A_{ik}(\mathbf x_{\{i\}'})x_i^k,$$
where $A_{ik}\colon\mathbf I_{\{i\}'}\to\mathbb R$ ($i=1,2,\dots,d$ and $k=0,1,\dots,n_i-1$) are arbitrary functions.

For a fixed $i=1,2,\dots,d$ and $k=0,1,\dots,n_i-1$ we have that $I_i\ni x_i\mapsto x_i^k$ is a box-$(n_i)$-affine function and $A_{ik}\colon\mathbf I_{\{i\}'}\to\mathbb R$. By Remark~\ref{rem:rem4}(c) we obtain that $\mathbf I\ni\mathbf x\mapsto A_{ik}(\mathbf x_{\{i\}'})x_i^k$ is box-$\mathbf n$-affine. Consequently, $W(\mathbf x)=\sum_{i=1}^d\sum_{k=0}^{n_i-1}A_{ik}(\mathbf x_{\{i\}'})x_i^k$ is box-$\mathbf n$-affine.
\end{proof}

\begin{prop}\label{prop:prop6}
Let $\mathbf n\in\mathbb N^d$ and $f,g\colon\mathbf I\to\mathbb R$
be two box-$\mathbf n$-affine functions.
Let $u_{i1}, u_{i2},\dots,u_{in_i}$ be pairwise distinct elements of $I_i$ ($i=1,2,\dots,d$).
If
\begin{equation}\label{eq:prop6}
f(x_1,\dots,x_{i-1},u_{ij}, x_{i+1},\dots,x_d)=g(x_1,\dots,x_{i-1},u_{ij}, x_{i+1},\dots,x_d)
\end{equation}
for each $i=1,2,\dots,d$
and $j=1,2,\dots,n_i$,
then $f=g$.
\end{prop}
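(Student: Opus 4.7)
The plan is to reduce the proposition to a vanishing statement. Since the set of box-$\mathbf{n}$-affine functions is a linear subspace (noted after the definition of box-$\mathbf n$-convexity), setting $h=f-g$ produces a box-$\mathbf{n}$-affine function $h\colon\mathbf{I}\to\mathbb R$, and hypothesis \eqref{eq:prop6} translates to $h(x_1,\dots,x_{i-1},u_{ij},x_{i+1},\dots,x_d)=0$ for every $i=1,\dots,d$ and $j=1,\dots,n_i$. It therefore suffices to show $h\equiv 0$.

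Fix an arbitrary $\mathbf{x}^*=(x_1^*,\dots,x_d^*)\in\mathbf{I}$. If $x_i^*=u_{ij}$ for some index $i$ (with $n_i\geq 1$) and some $j$, then $h(\mathbf{x}^*)=0$ directly by the hypothesis, so we may assume that for every $i$ with $n_i\ge 1$ we have $x_i^*\notin\{u_{i1},\dots,u_{in_i}\}$. For each $i=1,\dots,d$ choose the $n_i+1$ pairwise distinct points $x_{i0}=x_i^*$ and $x_{ij}=u_{ij}$ for $j=1,\dots,n_i$. Box-$\mathbf{n}$-affineness of $h$ combined with the expanded formula from Proposition~\ref{prop:prop5} yields
\begin{equation*}
0=\sum_{j_1=0}^{n_1}\cdots\sum_{j_d=0}^{n_d}\frac{h(x_{1j_1},x_{2j_2},\dots,x_{dj_d})}{\prod\limits_{i=1}^d\prod\limits_{\substack{l_i=0\\l_i\neq j_i}}^{n_i}(x_{ij_i}-x_{il_i})}.
\end{equation*}

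The crux of the proof is the observation that every term in this sum vanishes except the one with $j_1=j_2=\cdots=j_d=0$: whenever some $j_i\geq 1$, the $i$-th coordinate of the evaluation point equals $u_{ij_i}$, so $h$ vanishes there by the translated hypothesis. The surviving term equals $h(\mathbf{x}^*)$ divided by the nonzero product $\prod_{i=1}^d\prod_{l_i=1}^{n_i}(x_i^*-u_{il_i})$, forcing $h(\mathbf{x}^*)=0$ as required.

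I do not anticipate any real obstacle; the only mildly delicate point is the degenerate case when some $n_i=0$, in which the hypothesis imposes no constraint along the $i$-th coordinate. This is harmless: for such $i$ the index $j_i$ is forced to $0$ throughout the sum (the corresponding inner product over $l_i\neq j_i$ is empty and equals $1$), so the argument above goes through verbatim, and the case $\mathbf n=(0,\dots,0)$ is degenerate since box-$(0,\dots,0)$-affine functions are identically zero.
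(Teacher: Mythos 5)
Your proof is correct and is essentially identical to the paper's: set $h=f-g$, apply the expanded formula of Proposition~\ref{prop:prop5} with nodes $x_{i0}=x_i$ and $x_{ij}=u_{ij}$, and observe that only the $j_1=\dots=j_d=0$ term survives. Your extra care with the degenerate cases (when $x_i^*$ coincides with some $u_{ij}$, which is needed for the nodes to be pairwise distinct, and when some $n_i=0$) is a small but welcome tightening of the paper's argument.
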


\begin{proof}
Let $h=f-g$. For $i=1,2,\dots,d$ let $x_{i0}=x_i$ and $x_{ij}=u_{ij}$ ($j=1,2,\dots,n_i$).
Since $h$ is box-$\mathbf n$-affine, by Proposition~\ref{prop:prop5} we obtain
$$\sum_{j_1=0}^{n_1}\sum_{j_2=0}^{n_2}\dots\sum_{j_d=0}^{n_d}\frac{h(x_{1j_1},x_{2j_2},\dots,x_{dj_d})}{\prod\limits_{i=1}^d\prod\limits_{\substack{l_i=0\\l_i\neq j_i}}^{n_i}(x_{ij_i}-x_{il_i})}=0.$$
Equality~\eqref{eq:prop6} implies that $h(x_{1j_1},x_{2j_2},\dots,x_{dj_d})=0$, whenever at least one of $j_i$'s is non-zero. Consequently,
$h(x_1,x_2,\dots,x_d)=h(x_{10},x_{20},\dots,x_{d0})=0$.
\end{proof}

\begin{Thm}\label{thm:pseudo}
Let $\mathbf n=(n_1,n_2,\dots,n_d)\in\mathbb N^d$. Then a function $f\colon\mathbf I\to\mathbb R$ is box-$\mathbf n$-affine, if and only if it is a pseudo-polynomial of degree $(n_1-1,n_2-1,\dots,n_d-1)$.
\end{Thm}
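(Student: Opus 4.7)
The statement is a clean equivalence, and all the machinery needed has been set up in the three preceding results (Lemmas~\ref{lem:pseudo2} and~\ref{lem:pseudo3}, and Proposition~\ref{prop:prop6}). My plan is simply to assemble them.

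The easy implication $(\Leftarrow)$ is already Lemma~\ref{lem:pseudo3}: every pseudo-polynomial of degree $(n_1-1,\dots,n_d-1)$ is automatically box-$\mathbf n$-affine, so nothing further is required there.

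For the forward implication $(\Rightarrow)$, I would fix, once and for all, pairwise distinct points $u_{i1},u_{i2},\dots,u_{in_i}\in I_i$ for each $i=1,2,\dots,d$ (here we use that $n_i\in\mathbb N$, so such points exist in the interval $I_i$). Applying Lemma~\ref{lem:pseudo2} to $f$ with these nodes produces a pseudo-polynomial $W\colon\mathbf I\to\mathbb R$ of degree $(n_1-1,n_2-1,\dots,n_d-1)$ such that
$$f(x_1,\dots,x_{i-1},u_{ij},x_{i+1},\dots,x_d)=W(x_1,\dots,x_{i-1},u_{ij},x_{i+1},\dots,x_d)$$
for every $i=1,\dots,d$ and $j=1,\dots,n_i$. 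By Lemma~\ref{lem:pseudo3}, this $W$ is box-$\mathbf n$-affine. Hence both $f$ and $W$ are box-$\mathbf n$-affine and they agree on the ``coordinate hyperplanes'' $\{x_i=u_{ij}\}$. Proposition~\ref{prop:prop6} is exactly the uniqueness statement needed: it forces $f=W$ on all of $\mathbf I$, so $f$ is a pseudo-polynomial of the required degree.

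Since the three preceding results were proved precisely with this theorem in mind, there is no genuine obstacle; the only point requiring care is making sure that the existence of the nodes $u_{ij}$ is available, which is immediate from $n_i\in\mathbb N$ and $I_i$ being an interval. The structure of the argument is the standard ``construct a candidate interpolant on a sufficiently rich grid, then use a uniqueness principle for the affine class to identify it with $f$''.
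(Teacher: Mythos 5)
Your proposal is correct and coincides with the paper's own proof: the paper likewise disposes of $(\Leftarrow)$ by Lemma~\ref{lem:pseudo3}, and for $(\Rightarrow)$ fixes the nodes $u_{ij}$, builds $W$ via Lemma~\ref{lem:pseudo2}, notes $W$ is box-$\mathbf n$-affine by Lemma~\ref{lem:pseudo3}, and concludes $f=W$ from Proposition~\ref{prop:prop6}. No differences worth noting.
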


\begin{proof}
For the implication ($\Leftarrow$) see Lemma~\ref{lem:pseudo3}. We need to show the implication ($\Rightarrow$).

For each $i=1,2,\dots,d$, we fix arbitrary $n_i$ pairwise distinct points $u_{i1}$, $u_{i2}$, \dots, $u_{in_i}\in I_i$.
Let $W$ be the pseudo-polynomial of degree $(n_1-1,n_2-1,\dots,n_d-1)$ obtained by Lemma~\ref{lem:pseudo2}, for which \eqref{eq:lem1} is satisfied. By Lemma~\ref{lem:pseudo3}, we have that  $W$ is box-$\mathbf n$-affine. Then, by Proposition \ref{prop:prop6} it follows that  $f=W$. The theorem is proved.
\end{proof}

\section{$\mathbf n$-regular functions}

The box-$\mathbf n$-convex functions may be very irregular (they may even be non-measurable, e.g. $f(x_1,\dots,x_d)=h(x_1)$ is box-$\mathbf n$-convex for each $d\geq2$, $\mathbf n\in\mathbb\{1,2,\dots\}^d$ and any function $h$ of one variable). However, we will use differential methods to study them. This is possible thanks to the notion of $\mathbf n$-regularity introduced in this section.

\begin{defin}\label{def:regular}
Let $f\colon\mathbf I\to\mathbb R$ and $\mathbf n\in\mathbb N^d$. We say that the function $f$ is $\mathbf n$-regular if the functions $f_{A}^{\mathbf z}\colon\mathbf I_A\to\mathbb R$ are linear combinations of box-$\mathbf n_A$-convex functions for every $A\subset\{1,2,\dots,d\}$ and $\mathbf z\in\mathbf I_{A'}$.
\end{defin}

\begin{Rem}\label{rem:rem6}
Let $\mathbf n\in\mathbb N^d$.
\begin{itemize}
\item[(a)]
The set of all $\mathbf n$-regular functions is a~linear subspace of the space of all real functions on $\mathbf I$.
\item[(b)]
If $f\colon\mathbf I\to\mathbb R$ is $\mathbf n$-regular, $A\subset\{1,2,\dots,d\}$ and $\mathbf z\in\mathbf I_{A'}$, then $f_A^{\mathbf z}$ is $\mathbf n_A$-regular.
\item[(c)]
Let $A\subset\{1,2,\dots,d\}$ and let $f\colon\mathbf I\to\mathbb R$ be the function given by the formula $f(\mathbf x)=g(\mathbf x_A)\cdot h(\mathbf x_{A'})$, where $g\colon\mathbf I_A\to\mathbb R$  and $h\colon\mathbf I_{A'}\to\mathbb R$. 
If $g$ is $\mathbf n_A$-regular and $h$ is $\mathbf n_{A'}$-regular, then $f$ is $\mathbf n$-regular (by Remark~\ref{rem:rem4}(b)).
\end{itemize}
\end{Rem}

\begin{Lem}\label{lem:regpseud}
Let $\mathbf n\in\mathbb N^d$ and let a pseudo-polynomial $W\colon\mathbf I\to\mathbb R$ of degree $\mathbf n$ be given by
$$W(x_1,x_2,\dots,x_d)=\sum_{i=1}^d\sum_{k=0}^{n_i}A_{ik}(\mathbf x_{\{i\}'})x_i^k.$$
If the functions $A_{ik}$ are $\mathbf n_{\{i\}'}$-regular for each $i=1,2,\dots,d$ and $k=0,1,\dots,n_i$, then the pseudo-polynomial $W$ is $\mathbf n$-regular.
\end{Lem}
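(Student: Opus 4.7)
The plan is to decompose $W$ into the elementary summands $\mathbf x\mapsto A_{ik}(\mathbf x_{\{i\}'})x_i^k$ and invoke the closure properties of $\mathbf n$-regularity. Since Remark~\ref{rem:rem6}(a) tells us that the $\mathbf n$-regular functions form a linear subspace, it suffices to show that each summand of the form $\mathbf x\mapsto A_{ik}(\mathbf x_{\{i\}'})\cdot x_i^k$ (for $i=1,\dots,d$ and $k=0,1,\dots,n_i$) is $\mathbf n$-regular. Each such summand factors as $g(\mathbf x_A)\cdot h(\mathbf x_{A'})$ with $A=\{i\}'$, $g=A_{ik}$ and $h(x_i)=x_i^k$; then Remark~\ref{rem:rem6}(c) applies provided $g$ is $\mathbf n_{\{i\}'}$-regular (given by hypothesis) and $h$ is $\mathbf n_{\{i\}}$-regular, i.e. $(n_i)$-regular as a function of one variable.

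So the bulk of the work is to verify that the one-variable monomial $x\mapsto x^k$ is $(n_i)$-regular for every $0\leq k\leq n_i$. Here we unwind the definition: we must check that for $A=\{1\}$ the function itself is a linear combination of box-$(n_i)$-convex functions, and for $A=\emptyset$ the parameter-value $z^k$, viewed as a constant (a function of $0$ variables), is a linear combination of box-$()$-convex functions. The second case is immediate: box-$()$-convex constants are exactly the non-negative reals, and every real is a difference of two of them. For the first case we split according to $k$: if $k\leq n_i-1$, Theorem~\ref{thm:pseudo} (applied in dimension $d=1$) gives that $x^k$ is box-$(n_i)$-affine, hence both box-$(n_i)$-convex and box-$(n_i)$-concave; if $k=n_i$, then $x^{n_i}$ has $(n_i{-}1)$-st derivative equal to $n_i!\,x$, a convex function, so by Proposition~\ref{prop:1.1} it is $(n_i{-}1)$-convex in the classical sense, i.e., box-$(n_i)$-convex (the cases $n_i=0,1$ are handled directly via Remark~\ref{rem:onedim}).

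With these ingredients in place, for each pair $(i,k)$ Remark~\ref{rem:rem6}(c) yields that $\mathbf x\mapsto A_{ik}(\mathbf x_{\{i\}'})\,x_i^k$ is $\mathbf n$-regular, and Remark~\ref{rem:rem6}(a) gives that the finite sum $W$ is $\mathbf n$-regular, completing the proof.

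I do not anticipate a genuine obstacle here; the argument is essentially a bookkeeping exercise combining the linearity of $\mathbf n$-regularity, the tensor-product closure in Remark~\ref{rem:rem6}(c), and the easy one-variable fact that $x^k$ is either box-$(n_i)$-affine or box-$(n_i)$-convex. The only point that requires a moment's care is treating the degenerate endpoints (the constant case $k=0$, and the edge case $k=n_i$), which is why I would verify the $(n_i)$-regularity of $x^k$ by explicitly handling the two subsets $A=\emptyset$ and $A=\{1\}$ rather than quoting a black-box result.
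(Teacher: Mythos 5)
Your proof is correct and follows essentially the same route as the paper: decompose $W$ into the summands $\mathbf x\mapsto A_{ik}(\mathbf x_{\{i\}'})x_i^k$, apply Remark~\ref{rem:rem6}(c) to each using the box-$(n_i)$-convexity of $x_i\mapsto x_i^k$, and conclude by linearity (Remark~\ref{rem:rem6}(a)). The paper simply asserts that $x_i\mapsto x_i^k$ is box-$(n_i)$-convex without the case analysis you supply, so your write-up is just a more detailed version of the same argument.
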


If $d>2$, then the converse theorem does not hold. See Remark~\ref{rem:rem8}.

\begin{proof}
Let $i=1,2,\dots,d$ and $k=0,1,\dots,n_i$ be fixed. Since $I_i\ni x_i\mapsto x_i^k$ is box-$(n_i)$-convex and $A_{ik}\colon\mathbf I_{\{i\}'}\to\mathbb R$ is $\mathbf n_{\{i\}'}$-regular, we obtain (by Remark~\ref{rem:rem6}(c)) that the function $\mathbf I\ni\mathbf x\mapsto A_{ik}(\mathbf x_{\{i\}'})x_i^k$ is $\mathbf n$-regular. This implies that $W(\mathbf x)=\sum_{i=1}^d\sum_{k=0}^{n_i}A_{ik}(\mathbf x_{\{i\}'})x_i^k$ is $\mathbf n$-regular.
\end{proof}

The following lemma is a straightening of Lemma~\ref{lem:pseudo2} for $\mathbf n$-regular functions.

\begin{Lem}\label{lem:pseudo6}
Let $\mathbf n\in\mathbb N^d$ and let $f\colon\mathbf I\to\mathbb R$ be an $\mathbf n$-regular function.
For each $i=1,2,\dots,d$, let $u_{i1},u_{i2},\dots,u_{i,n_i+1}$ be pairwise distinct elements of $I_i$.
Then for each $i=1,2,\dots,d$ there exist $\mathbf n_{\{i\}'}$-regular functions $A_{i0},A_{i1},\dots$, $A_{in_i}\colon\mathbf I_{\{i\}'}\to\mathbb R$ such that the pseudo-polynomial $$
W(x_1,x_2,\dots,x_d)=\sum_{i=1}^d\sum_{k=0}^{n_i}A_{ik}(\mathbf x_{\{i\}'})x_i^k
$$
is $\mathbf n$-regular, and
\begin{equation}\label{eq:lem6}
    f(x_1,\dots,x_{i-1},u_{ij}, x_{i+1},\dots,x_d)=W(x_1,\dots,x_{i-1},u_{ij}, x_{i+1},\dots,x_d)
\end{equation}
for each $i=1,2,\dots,d$
and $j=1,2,\dots,n_i+1$.
\end{Lem}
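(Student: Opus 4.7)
The plan is to mirror the proof of Lemma~\ref{lem:pseudo2}, adapted in two ways: we use $n_i+1$ interpolation nodes in the $i$-th direction (so the slicewise interpolant has degree $n_i$ rather than $n_i-1$), and we track $\mathbf n$-regularity throughout the construction.

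First, I would establish the straightforward analog of Lemma~\ref{lem:pseudo1} with $n_i$ replaced by $n_i+1$: for a given $g\colon\mathbf I\to\mathbb R$ and pairwise distinct nodes $u_{i1},\dots,u_{i,n_i+1}\in I_i$, a standard Lagrange-interpolation argument in $x_i$ produces a pseudo-polynomial $V(\mathbf x)=\sum_{k=0}^{n_i}B_{ik}(\mathbf x_{\{i\}'})x_i^k$ that agrees with $g$ whenever $x_i\in\{u_{i1},\dots,u_{i,n_i+1}\}$, and each $B_{ik}(\mathbf y)$ is a linear combination of $g_{\{i\}'}^{u_{i1}}(\mathbf y),\dots,g_{\{i\}'}^{u_{i,n_i+1}}(\mathbf y)$ with real coefficients depending only on the nodes.

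Next, set $g_1=f$ and, for $i=1,\dots,d$, inductively let $W_i(\mathbf x)=\sum_{k=0}^{n_i}A_{ik}(\mathbf x_{\{i\}'})x_i^k$ be the pseudo-polynomial obtained by applying the above construction to $g_i$ in the $x_i$ direction, and put $g_{i+1}=g_i-W_i$. Two parallel inductions then deliver the needed properties. \emph{Regularity:} each $g_i$ is $\mathbf n$-regular; for the step, Remark~\ref{rem:rem6}(b) gives that every slice $(g_i)_{\{i\}'}^{u_{ij}}$ is $\mathbf n_{\{i\}'}$-regular, so the coefficients $A_{ik}$ (being linear combinations of such slices with real constants) are $\mathbf n_{\{i\}'}$-regular by Remark~\ref{rem:rem6}(a), and then Lemma~\ref{lem:regpseud} yields that $W_i$ is $\mathbf n$-regular, whence so is $g_{i+1}=g_i-W_i$. \emph{Vanishing propagation:} for every pair $(i,j)$ with $j\in\{1,\dots,n_i+1\}$ and every $l>i$, both $g_l$ and $W_l$ vanish identically on the hyperplane $\{x_i=u_{ij}\}$. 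Indeed $g_{i+1}=g_i-W_i$ vanishes there by construction, and if $g_l$ vanishes on that hyperplane then for each $m=1,\dots,n_l+1$ the slice $(g_l)_{\{l\}'}^{u_{lm}}$ vanishes when its $i$-th argument equals $u_{ij}$, hence so do the Lagrange coefficients $A_{lk}$, hence so does $W_l$, and therefore also $g_{l+1}=g_l-W_l$.

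Finally, I would set $W=\sum_{i=1}^{d}W_i$: it has the required form, and is $\mathbf n$-regular by Remark~\ref{rem:rem6}(a) (or Lemma~\ref{lem:regpseud}). Matching $f$ on every hyperplane $\{x_i=u_{ij}\}$ is then immediate, since $f-\sum_{l=1}^{d}W_l=g_{d+1}$ and iterating the vanishing propagation forces $g_{d+1}=0$ there. The main technical hurdle is precisely this vanishing propagation: it is what ensures that the $W_l$'s built in later coordinate directions do not spoil the interpolation already achieved on hyperplanes in earlier directions; everything else is the same bookkeeping as in Lemma~\ref{lem:pseudo2}, combined with an application of Lemma~\ref{lem:regpseud} to carry regularity from the coefficients up to $W_i$.
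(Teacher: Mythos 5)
Your proof is correct and follows essentially the same route as the paper: an inductive peeling-off of Lagrange interpolants $W_i$ built from $g_i=f-\sum_{j<i}W_j$ using Lemma~\ref{lem:pseudo1} with $n_i+1$ nodes, with regularity of the coefficients obtained from Remark~\ref{rem:rem6} and carried up to $W$ via Lemma~\ref{lem:regpseud}. The only difference is that you make explicit the vanishing-propagation step (later $W_l$'s do not disturb interpolation on earlier hyperplanes), which the paper leaves implicit by reference to the proof of Lemma~\ref{lem:pseudo2}; your argument for it is the right one.
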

\begin{proof} Similarly to the proof of Lemma~\ref{lem:pseudo2}, we will define by induction functions $g_i\colon\mathbf I\to\mathbb R$ and pseudo-polynomials $W_i\colon\mathbf I\to\mathbb R$ for $i=1,2,\dots,d$.
First we put $g_i=f-\sum_{j=1}^{i-1}W_j$. Next, the pseudo-polynomial
$$W_i(x_1,x_2,\dots,x_d)=\sum_{k=0}^{n_i}A_{ik}(\mathbf x_{\{i\}'})x_i^k$$
such that $g_i(x_1,x_2,\dots,x_d)=W_i(x_1,x_2,\dots,x_d)$, whenever $x_i\in\{u_{i1},u_{i2},\dots$, $u_{i,n_i+1}\}$ is obtained using Lemma~\ref{lem:pseudo1}. (Note, that we use Lemma~\ref{lem:pseudo1} with $n_i+1$ in place of $n_i$.)

By Lemma~\ref{lem:pseudo1}, Lemma~\ref{lem:regpseud} and Remark~\ref{rem:rem6}(b), we have that for each $i=1,2,\dots,d$ and $k=0,1,\dots,n_i$ the function $A_{ik}$ is $\mathbf n_{\{i\}'}$-regular as a linear combination of $\mathbf n_{\{i\}'}$-regular functions $(g_i)_{\{i\}'}^{u_{i1}}$, $(g_i)_{\{i\}'}^{u_{i2}}$, \dots, $(g_i)_{\{i\}'}^{u_{i,n_i+1}}$.

Taking the pseudo-polynomial $W=\sum_{i=1}^{d} W_i$, we obtain that
equation \eqref{eq:lem6} is satisfied. By Lemma~\ref{lem:regpseud}, $W$ is $\mathbf n$-regular. The proof is completed.
\end{proof}

\begin{prop}\label{prop:regpseud}
Let $\mathbf n\in\mathbb N^d$. A pseudo-polynomial $W\colon\mathbf I\to\mathbb R$ of degree $\mathbf n$ is $\mathbf n$-regular if and only if
for each $i=1,2,\dots,d$ there exist $\mathbf n_{\{i\}'}$-regular functions $A_{ik}\colon\mathbf I\to\mathbb R$ $(k=0,1,\dots,n_i)$ such that
$$W(x_1,x_2,\dots,x_d)=\sum_{i=1}^d\sum_{k=0}^{n_i}A_{ik}(\mathbf x_{\{i\}'})x_i^k.$$
\end{prop}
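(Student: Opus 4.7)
The $(\Leftarrow)$ direction is exactly Lemma~\ref{lem:regpseud}, so all the work lies in $(\Rightarrow)$.

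My plan is to build the required representation by interpolation and then match it against $W$ using the box-affine uniqueness machinery already in hand. Fix, for each $i = 1, \dots, d$, any $n_i + 1$ pairwise distinct points $u_{i,1}, \dots, u_{i,n_i+1} \in I_i$. Since $W$ is by hypothesis $\mathbf n$-regular, Lemma~\ref{lem:pseudo6} applied to $W$ produces a pseudo-polynomial $\tilde W$ of degree $\mathbf n$ of the form
$$\tilde W(\mathbf x) = \sum_{i=1}^d \sum_{k=0}^{n_i} A_{ik}(\mathbf x_{\{i\}'})\, x_i^k,$$
whose coefficients $A_{ik}$ are $\mathbf n_{\{i\}'}$-regular, such that $\tilde W$ coincides with $W$ on every slice $x_i = u_{i,j}$ for $i = 1, \dots, d$ and $j = 1, \dots, n_i + 1$. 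The proof then reduces to showing that $W = \tilde W$, for once this is established $W$ inherits the advertised regular representation.

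For the equality $W = \tilde W$ I would invoke the affine theory already developed. Both $W$ and $\tilde W$ are pseudo-polynomials of degree $\mathbf n = (n_1, \dots, n_d)$; by Theorem~\ref{thm:pseudo} (applied with $(n_1 + 1, \dots, n_d + 1)$ in the role of the multi-index there) both functions are box-$(n_1 + 1, \dots, n_d + 1)$-affine. Proposition~\ref{prop:prop6}, applied with the same shifted multi-index, asserts that two such box-affine functions which agree on $n_i + 1$ pairwise distinct hyperslices in each coordinate direction $i$ must coincide. The interpolation data produced by Lemma~\ref{lem:pseudo6} fits exactly this hypothesis, so $W = \tilde W$, completing the proof.

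No step in this plan is a genuine obstacle, since the substantive work has been absorbed into Lemma~\ref{lem:pseudo6}, Theorem~\ref{thm:pseudo}, and Proposition~\ref{prop:prop6}. The only subtle point worth flagging is the shift of multi-index: a pseudo-polynomial of degree $\mathbf n$ is box-affine of order one higher, and this is precisely what makes the $n_i + 1$ (rather than $n_i$) interpolation nodes furnished by Lemma~\ref{lem:pseudo6} exactly the right quantity to pin $W$ down uniquely via Proposition~\ref{prop:prop6}.
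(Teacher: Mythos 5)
Your proposal is correct and follows essentially the same route as the paper's own proof: both apply Lemma~\ref{lem:pseudo6} to $W$ at $n_i+1$ nodes per coordinate to obtain a pseudo-polynomial $W'$ with $\mathbf n_{\{i\}'}$-regular coefficients, then conclude $W=W'$ by noting that both are box-$(n_1+1,\dots,n_d+1)$-affine (Theorem~\ref{thm:pseudo}) and agree on the interpolation slices (Proposition~\ref{prop:prop6}). The point you flag about the shift of multi-index is exactly the mechanism the paper relies on.
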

\begin{proof}
For the implication ($\Leftarrow$) see Lemma~\ref{lem:regpseud}. We need to show the implication ($\Rightarrow$). For each $i=1,2,\dots,d$, we choose arbitrary pairwise distinct $u_{i1},u_{i2},\dots$, $u_{i,n_i+1}\in I_i$. We apply Lemma~\ref{lem:pseudo6} for the function $W$ and we obtain the pseudo-polynomial
$W'(x_1,x_2,\dots,x_d)=\sum_{i=1}^d\sum_{k=0}^{n_i}A_{ik}(\mathbf x_{\{i\}'})x_i^k$ with $\mathbf n_{\{i\}'}$-regular coefficients $A_{ik}$. Since both $W$ and $W'$ are $(n_1+1,n_2+1,\dots,n_d+1)$-affine functions (by Theorem~\ref{thm:pseudo}) and they satisfy \eqref{eq:lem6}, we obtain (by Proposition~\ref{prop:prop6}) that $W=W'$.
\end{proof}

\begin{Rem}\label{rem:rem8}
According to Proposition~\ref{prop:regpseud}, a pseudo-polynomial is regular if it has a~representation with regular coefficients. However, for $d\geq3$, every regular pseudo-polynomial has also representations with non-regular coefficients. For example the $(1,1,1)$-regular pseudo-polynomial $W(x,y,z)=e^z\cdot x+e^x\cdot y+e^y\cdot z$ can be presented as $W(x,y,z)=(e^z+h(z)y)\cdot x+(e^x-h(z)x)\cdot y+e^y\cdot z$, where $h$ is arbitrary.
\end{Rem}

\begin{Thm}\label{thm:prop8}
Let $\mathbf n\in\mathbb N^d$ and let $f\colon\mathbf I\to\mathbb R$ be a box-$\mathbf n$-convex function.
For each $i=1,2,\dots,d$, let $u_{i1},u_{i2},\dots,u_{in_i}$ be pairwise distinct elements of $I_i$. If
\begin{equation}\label{eq:prop8}
f(x_1,\dots,x_{i-1},u_{ij}, x_{i+1},\dots,x_d)=0
\end{equation}
for each $i=1,2,\dots,d$
and $j=1,2,\dots,n_i$, then $f$ is $\mathbf n$-regular.
\end{Thm}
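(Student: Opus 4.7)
The plan is to verify the definition of $\mathbf n$-regularity directly. Fix $A\subset\{1,2,\dots,d\}$ and $\mathbf z\in\mathbf I_{A'}$; I will show that $f_A^{\mathbf z}$ is a scalar multiple of a box-$\mathbf n_A$-convex function, which is then trivially a linear combination of such. If $z_i\in\{u_{i1},\dots,u_{in_i}\}$ for some $i\in A'$, then \eqref{eq:prop8} forces $f_A^{\mathbf z}\equiv 0$, so I may assume that for every $i\in A'$ the $n_i+1$ points $u_{i1},\dots,u_{in_i},z_i$ of $I_i$ are pairwise distinct.

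Under this assumption, define $D\colon\mathbf I_A\to\mathbb R$ by letting $D(\mathbf y)$ be the multiple divided difference of $f_{A'}^{\mathbf y}$ taken in the $A'$-coordinates at the points $(u_{i1},\dots,u_{in_i},z_i)_{i\in A'}$. By Remark~\ref{rem:rem1}, applying any additional multiple divided difference to $D$ in the $A$-coordinates at arbitrary pairwise distinct points equals the full box-$\mathbf n$ divided difference of $f$ at the combined system of points, which is nonnegative since $f$ is box-$\mathbf n$-convex; hence $D$ is box-$\mathbf n_A$-convex.

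Now expand $D(\mathbf y)$ using the explicit formula \eqref{eq:prop5} of Proposition~\ref{prop:prop5}. The resulting sum is indexed by $(j_i)_{i\in A'}$ with $j_i\in\{0,\dots,n_i\}$, and each summand is, up to a product of point differences, the value of $f$ at a point whose $A$-coordinates are $\mathbf y$ and whose $i$-th coordinate for $i\in A'$ equals either one of $u_{i1},\dots,u_{in_i}$ (when $j_i<n_i$) or $z_i$ (when $j_i=n_i$). By the vanishing hypothesis \eqref{eq:prop8}, every summand with some $j_i<n_i$ contains an $A'$-coordinate equal to a $u_{ik}$ and therefore vanishes; only the term with $j_i=n_i$ for all $i\in A'$ survives, and a direct computation of its denominator yields
$$f_A^{\mathbf z}(\mathbf y)=C_{\mathbf z}\cdot D(\mathbf y),\qquad C_{\mathbf z}=\prod_{i\in A'}\prod_{k=1}^{n_i}(z_i-u_{ik})\neq 0,$$
for every $\mathbf y\in\mathbf I_A$. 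The identity also holds at the ``bad'' points $\mathbf y$ where some $y_i=u_{ik}$ with $i\in A$, since by \eqref{eq:prop8} both sides then vanish.

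Because $D$ is box-$\mathbf n_A$-convex and $C_{\mathbf z}$ is a nonzero real number, $f_A^{\mathbf z}$ is a scalar multiple of a box-$\mathbf n_A$-convex function and hence a linear combination of such, which is exactly what $\mathbf n$-regularity requires. I do not expect any deep obstacle: the whole argument rests on Proposition~\ref{prop:prop5} collapsing to a single term thanks to \eqref{eq:prop8}, and the only items that require careful bookkeeping are the identification in Remark~\ref{rem:rem1} that certifies $D$ as box-$\mathbf n_A$-convex, together with the check that the displayed identity survives the ``bad'' values of $\mathbf y$.
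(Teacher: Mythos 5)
Your proof is correct and follows essentially the same route as the paper: both arguments fix $A$ and $\mathbf z$, dispose of the case where some $z_i$ coincides with a node via \eqref{eq:prop8}, and otherwise use Remark~\ref{rem:rem1} together with the collapse of the expansion \eqref{eq:prop5} to a single term to show that $f_A^{\mathbf z}$ is a nonzero scalar multiple of a box-$\mathbf n_A$-convex function. The only cosmetic difference is that the paper phrases the conclusion as ``$f_A^{\mathbf z}$ is box-$\mathbf n_A$-convex or box-$\mathbf n_A$-concave according to the sign of the denominator,'' which is exactly your $f_A^{\mathbf z}=C_{\mathbf z}\cdot D$ with $C_{\mathbf z}$ of either sign.
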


\begin{proof}
Let $A\subset\{1,2,\dots,d\}$ and $\mathbf z\in\mathbf I_{A'}$. Let $a_1<a_2<\dots<a_{|A|}$ be all elements of the set $A$ and $a'_1<a'_2<\dots<a'_{|A'|}$ be all elements of the set $A'$. We will show that the function $f_A^{\mathbf z}\colon\mathbf I_A\to\mathbb R$ is either box-$\mathbf n_A$-convex or box-$\mathbf n_A$-concave (in particular it is a linear combination of box-$\mathbf n_A$-convex functions).

Suppose that there exist $k=1,2,\dots,|A'|$, $i=a'_k$ and $j=1,2,\dots,n_i$ such that $z_k=u_{ij}$. Then, by \eqref{eq:prop8}, $f_A^{\mathbf z}=0$. Consequently, $f_A^{\mathbf z}$ is both box-$\mathbf n_A$-convex and box-$\mathbf n_A$-concave.

In the remaining case, for each $k=1,2,\dots,|A'|$ and $i=a'_k$ the numbers $z_k$, $u_{i1},u_{i2},\dots,u_{in_i}$ are pairwise distinct elements of $I_i$.
Let $\mathbf x_i=(z_k,u_{i1},$ $u_{i2},\dots$, $u_{in_i})$.
For each $i\in A$ let $x_{i0}, x_{i1},\dots,x_{in_i}$ be pairwise distinct elements of $I_i$ and $\mathbf x_i=(x_{i0}, x_{i1},\dots,x_{in_i})$.

By Remark~\ref{rem:rem1},
\begin{equation}\label{eq:eq10}
\left[\ \LARGE{\substack{\mathbf x_1\\\\\mathbf x_2\\\\\dots\\\\\mathbf x_d}};f\right]=
\left[\ \LARGE{\substack{\mathbf x_{a_1}\\\\\mathbf x_{a_2}\\\\\dots\\\\\mathbf x_{a_{|A|}}}};g\right],
\end{equation}
where $g\colon\mathbf I_A\to\mathbb R$ is given by
$$g(\mathbf x)=
\left[\LARGE{\substack{\mathbf x_{a'_1}\\\\\mathbf x_{a'_2}\\\\\dots\\\\\mathbf x_{a'_{|A'|}}}};f_{A'}^{\mathbf x}\right].$$

By \eqref{eq:prop8} and by Proposition~\ref{prop:prop5} applied to the function $f_{A'}^{\mathbf x}$ we obtain
$$g(\mathbf x)=\frac{f_{A'}^{\mathbf x}(z_1,z_2,\dots,z_{|A'|})}{\prod\limits_{k=1}^{|A'|}\prod\limits_{\substack{j=1\\\text{(for $i=a'_k$)}}}^{n_i}(z_k-u_{ij})}
=\frac{f_A^{\mathbf z}(\mathbf x)}{\prod\limits_{k=1}^{|A'|}\prod\limits_{\substack{j=1\\\text{(for $i=a'_k$)}}}^{n_i}(z_k-u_{ij})}.$$
It follows that \eqref{eq:eq10} can be written as
$$ \left[\ \LARGE{\substack{\mathbf x_1\\\\\mathbf x_2\\\\\dots\\\\\mathbf x_d}};f\right]=\frac1{\prod\limits_{k=1}^{|A'|}\prod\limits_{\substack{j=1\\\text{(for $i=a'_k$)}}}^{n_i}(z_k-u_{ij})}\cdot
\left[\ \LARGE{\substack{\mathbf x_{a_1}\\\\\mathbf x_{a_2}\\\\\dots\\\\\mathbf x_{a_{|A|}}}};f_A^{\mathbf z}\right].$$
The left side of the above equality in non-negative (because $f$ is box-$\mathbf n$-convex), which implies that the above multiple divided difference of $f_A^{\mathbf z}$ has the same sign as the denominator. Consequently, $f_A^{\mathbf z}$ is box-$\mathbf n_A$-convex if the denominator is positive, and it is box-$\mathbf n_A$-concave if the denominator is negative. The proposition is proved.
\end{proof}

By Lemma~\ref{lem:pseudo2} and Theorem~\ref{thm:prop8}, we obtain the following proposition.
\begin{prop}\label{prop:regularize}
Let $\mathbf n=(n_1,n_2,\dots,n_d)\in\mathbb N^d$ and let $f\colon\mathbf I\to\mathbb R$ be a box-$\mathbf n$-convex function. There exists a~pseudo-polynomial $W\colon\mathbf I\to\mathbb R$ of degree $(n_1-1,n_2-1,\dots,n_d-1)$ such that $f-W$ is an $\mathbf n$-regular and box-$\mathbf n$-convex function.
\end{prop}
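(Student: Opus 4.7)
The plan is to combine Lemma~\ref{lem:pseudo2}, Lemma~\ref{lem:pseudo3}, and Theorem~\ref{thm:prop8} directly, using the pseudo-polynomial from Lemma~\ref{lem:pseudo2} as a correction term that forces the residue $f-W$ to vanish on a suitable ``cross'' of hyperplanes.

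First I would fix, for each $i=1,2,\dots,d$, any $n_i$ pairwise distinct points $u_{i1},u_{i2},\dots,u_{in_i}\in I_i$. Applying Lemma~\ref{lem:pseudo2} to $f$ with these nodes, I obtain a pseudo-polynomial $W\colon\mathbf I\to\mathbb R$ of degree $(n_1-1,n_2-1,\dots,n_d-1)$ that agrees with $f$ on the whole set
\[
\bigcup_{i=1}^d\bigcup_{j=1}^{n_i}\{\mathbf x\in\mathbf I:x_i=u_{ij}\}.
\]
This is exactly equation \eqref{eq:lem1} in the statement of Lemma~\ref{lem:pseudo2}. Setting $g=f-W$, the function $g$ then satisfies
$$g(x_1,\dots,x_{i-1},u_{ij},x_{i+1},\dots,x_d)=0$$
for every $i=1,2,\dots,d$ and $j=1,2,\dots,n_i$, which is precisely the hypothesis \eqref{eq:prop8} of Theorem~\ref{thm:prop8}.

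Next I would verify that $g$ is still box-$\mathbf n$-convex. By Lemma~\ref{lem:pseudo3}, any pseudo-polynomial of degree $(n_1-1,\dots,n_d-1)$ is box-$\mathbf n$-affine, so all multiple divided differences of order $\mathbf n$ of $W$ vanish. Since divided differences are linear in $f$ (as is clear from the expanded formula \eqref{eq:prop5}), for every admissible choice of nodes we have
$$\left[\ \substack{\mathbf x_1\\\\\mathbf x_2\\\\\dots\\\\\mathbf x_d};g\right]=\left[\ \substack{\mathbf x_1\\\\\mathbf x_2\\\\\dots\\\\\mathbf x_d};f\right]-\left[\ \substack{\mathbf x_1\\\\\mathbf x_2\\\\\dots\\\\\mathbf x_d};W\right]=\left[\ \substack{\mathbf x_1\\\\\mathbf x_2\\\\\dots\\\\\mathbf x_d};f\right]\geq 0,$$
so $g$ is box-$\mathbf n$-convex.

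Finally, having established that $g=f-W$ is a box-$\mathbf n$-convex function that vanishes on the required cross, Theorem~\ref{thm:prop8} applies and yields that $g$ is $\mathbf n$-regular. This is the content of the proposition. There is no real obstacle here; the whole proof is the assembly of three previously established facts, with the only substantive observation being that subtracting a box-$\mathbf n$-affine pseudo-polynomial preserves box-$\mathbf n$-convexity, which in turn allows the hypothesis of Theorem~\ref{thm:prop8} to be verified on the residue.
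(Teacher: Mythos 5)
Your proposal is correct and follows exactly the route the paper intends: the paper derives Proposition~\ref{prop:regularize} directly from Lemma~\ref{lem:pseudo2} and Theorem~\ref{thm:prop8}, with the box-$\mathbf n$-convexity of $f-W$ being the same easy consequence of Lemma~\ref{lem:pseudo3} and the linearity of divided differences that you spell out. No gaps.
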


\begin{Lem}\label{lem:dif1}
Let $\mathbf n\in\mathbb N^d$.
For every $\mathbf n$-regular function $f\colon\mathbf I\to\mathbb R$ there exist two $\mathbf n$-regular and box-$\mathbf n$-convex functions $g,h\colon\mathbf I\to\mathbb R$ such that $f=g-h$.
\end{Lem}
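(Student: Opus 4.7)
The plan is to use Definition~\ref{def:regular} with $A=\{1,2,\dots,d\}$ (so that $A'=\emptyset$ and $f_A^{\mathbf z}=f$), which tells us that $f$ itself is a linear combination of box-$\mathbf n$-convex functions, say
$$f=\sum_{k=1}^m c_k f_k,\qquad f_k\text{ box-}\mathbf n\text{-convex},\ c_k\in\mathbb R.$$
The naive split, using the convex cone structure noted in the remark following the definition of box-$\mathbf n$-convexity, gives $f=\sum_{c_k\ge 0}c_k f_k-\sum_{c_k<0}(-c_k)f_k$ as a difference of two box-$\mathbf n$-convex functions. The issue, which is the main obstacle, is that the individual $f_k$ need not be $\mathbf n$-regular, so neither summand is guaranteed to be $\mathbf n$-regular.

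To repair this, I would apply Proposition~\ref{prop:regularize} to each $f_k$ separately, obtaining a pseudo-polynomial $W_k$ of degree $(n_1-1,\dots,n_d-1)$ such that $\tilde f_k:=f_k-W_k$ is simultaneously $\mathbf n$-regular and box-$\mathbf n$-convex. Substituting back yields
$$f=\sum_{k=1}^m c_k\tilde f_k+W,\qquad W:=\sum_{k=1}^m c_kW_k.$$
The correction $W$ is a pseudo-polynomial of degree $(n_1-1,\dots,n_d-1)$, hence box-$\mathbf n$-affine by Theorem~\ref{thm:pseudo}; moreover $W=f-\sum_k c_k\tilde f_k$ is $\mathbf n$-regular by Remark~\ref{rem:rem6}(a), since $f$ and every $\tilde f_k$ are $\mathbf n$-regular.

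Finally, I would split according to the sign of $c_k$ and absorb $W$ into one side: set
$$G:=\sum_{c_k\ge 0}c_k\tilde f_k,\qquad H:=\sum_{c_k<0}(-c_k)\tilde f_k,\qquad g:=G+W,\qquad h:=H.$$
Both $G$ and $H$ are non-negative linear combinations of $\mathbf n$-regular box-$\mathbf n$-convex functions, so they are $\mathbf n$-regular and box-$\mathbf n$-convex. Adding the box-$\mathbf n$-affine (and $\mathbf n$-regular) function $W$ to $G$ preserves both properties, since multiple divided differences are linear and vanish on $W$. Therefore $g$ and $h$ are $\mathbf n$-regular box-$\mathbf n$-convex functions with $g-h=G+W-H=f$, as required. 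The core of the argument is really Proposition~\ref{prop:regularize}: it lets us trade an arbitrary box-$\mathbf n$-convex decomposition for one whose summands are $\mathbf n$-regular, at the harmless cost of a box-$\mathbf n$-affine corrector.
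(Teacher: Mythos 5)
Your proof is correct and follows essentially the same strategy as the paper: split the linear combination by sign and use Proposition~\ref{prop:regularize} to trade box-$\mathbf n$-convex summands for $\mathbf n$-regular ones at the cost of a box-$\mathbf n$-affine pseudo-polynomial, which is then absorbed into one side. The only cosmetic difference is that the paper applies Proposition~\ref{prop:regularize} once to the positive part $\widetilde g=\sum_{c_k\geq 0}c_kf_k$ and deduces the regularity of $h=g-f$ from linearity, whereas you regularize each $f_k$ separately and collect the correctors; both variants are valid.
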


\begin{proof} By Definition~\ref{def:regular}, $f=\sum_{j=1}^Ja_jf_j$, where $f_1,f_2,\dots,f_J\colon\mathbf I\to\mathbb R$ are box-$\mathbf n$-convex functions.
We put $\widetilde g=\sum_{j\colon a_j\geq0}a_jf_j$ and $\widetilde h=\sum_{j\colon a_j<0}(-a_j)f_j$. Then $f=\widetilde g-\widetilde h$ and both $\widetilde g$ and $\widetilde h$ are box-$\mathbf n$-convex functions.  By Proposition~\ref{prop:regularize}, there exists box-$\mathbf n$-affine pseudo-polynomial $W$ such that $\widetilde g-W$ is $\mathbf n$-regular and box-$\mathbf n$-convex.
Let $g=\widetilde g-W$ and $h=\widetilde h-W$. Then both $g$ and $h$ are box-$\mathbf n$-convex, and $f=g-h$. Since $f$ and $g$ are $\mathbf n$-regular, we obtain that $h=g-f$ is also $\mathbf n$-regular.
\end{proof}

\begin{Lem}\label{lem:dif2}
Let $\mathbf n\in\mathbb N^d$, $i\in\{1,2,\dots,d\}$ and let $u_{i1},u_{i2},\dots,u_{in_i}\in I_i$ be pairwise distinct.
Assume that $f\colon\mathbf I\to\mathbb R$ is an~$\mathbf n$-regular function such that
$f(x_1,x_2,\dots,x_d)=0$, whenever $x_i=u_{ij}$ for $j=1,2,\dots,n_i$.
Then there exist two $\mathbf n$-regular and box-$\mathbf n$-convex functions $g,h\colon\mathbf I\to\mathbb R$ such that $f=g-h$ and $g(x_1,x_2,\dots,x_d)=h(x_1,x_2,\dots,x_d)=0$, whenever $x_i=u_{ij}$ for $j=1,2,\dots,n_i$.
\end{Lem}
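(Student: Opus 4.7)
The plan is to start from the decomposition produced by Lemma~\ref{lem:dif1} and then subtract the same box-$\mathbf n$-affine correction from each summand, chosen so that both summands vanish on the hyperplanes $\{x_i=u_{ij}\}$.

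First I would apply Lemma~\ref{lem:dif1} to write $f=\widetilde g-\widetilde h$ with $\widetilde g,\widetilde h\colon\mathbf I\to\mathbb R$ both $\mathbf n$-regular and box-$\mathbf n$-convex. Since $f$ vanishes whenever $x_i=u_{ij}$, the two functions agree: $\widetilde g=\widetilde h$ on each hyperplane $\{x_i=u_{ij}\}$, $j=1,\dots,n_i$. This common restriction is what I would interpolate.

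Next I would apply Lemma~\ref{lem:pseudo1} to $\widetilde g$ in direction $i$ with the $n_i$ nodes $u_{i1},\dots,u_{in_i}$, producing
$$\varphi(\mathbf x)=\sum_{k=0}^{n_i-1}A_{ik}(\mathbf x_{\{i\}'})\,x_i^k,$$
where $\varphi(\mathbf x)=\widetilde g(\mathbf x)$ whenever $x_i\in\{u_{i1},\dots,u_{in_i}\}$ and each $A_{ik}$ is a linear combination of the slices $\widetilde g_{\{i\}'}^{u_{ij}}$. By Remark~\ref{rem:rem6}(b) the slices are $\mathbf n_{\{i\}'}$-regular, hence so is each $A_{ik}$. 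Regarding $\varphi$ as a pseudo-polynomial of degree $(n_1-1,\dots,n_d-1)$ (with all coefficients outside the $i$-th block set to zero), Lemma~\ref{lem:pseudo3} gives that $\varphi$ is box-$\mathbf n$-affine and Lemma~\ref{lem:regpseud} gives that $\varphi$ is $\mathbf n$-regular.

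Finally I would set $g=\widetilde g-\varphi$ and $h=\widetilde h-\varphi$. Then $g-h=f$, and both $g,h$ are $\mathbf n$-regular by Remark~\ref{rem:rem6}(a). Since subtracting a box-$\mathbf n$-affine function from a box-$\mathbf n$-convex one preserves the non-negativity of every multiple divided difference, $g$ and $h$ remain box-$\mathbf n$-convex. The required vanishing holds by construction: $g=\widetilde g-\varphi=0$ on $\{x_i=u_{ij}\}$ directly from the interpolation property, while $h=\widetilde h-\varphi=\widetilde g-\varphi=0$ there using the equality $\widetilde g=\widetilde h$ on that hyperplane. The only delicate point is this double use of the single interpolant $\varphi$, which works precisely because the hypothesis on $f$ forces $\widetilde g$ and $\widetilde h$ to coincide on the prescribed hyperplanes; everything else is an assembly of earlier results.
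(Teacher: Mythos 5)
Your proof is correct, and it reaches the conclusion by a mildly different route than the paper. The paper does not invoke Lemma~\ref{lem:dif1} as a black box; instead it reruns that proof: it starts from a decomposition $f=\widetilde g-\widetilde h$ into box-$\mathbf n$-convex (not yet regular) summands and subtracts a single pseudo-polynomial $W$, chosen via Lemma~\ref{lem:pseudo2} so that $W$ interpolates $\widetilde g$ on hyperplanes in every direction $l$, with the nodes in direction $i$ taken to be the prescribed $u_{i1},\dots,u_{in_i}$. Regularity of $\widetilde g-W$ then comes from Theorem~\ref{thm:prop8} (through Proposition~\ref{prop:regularize}), and the vanishing on $\{x_i=u_{ij}\}$ is built into that same subtraction. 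You instead perform the regularization first (Lemma~\ref{lem:dif1}) and then apply a second, direction-$i$-only correction $\varphi$, whose $\mathbf n$-regularity you obtain from Remark~\ref{rem:rem6}(a),(b) and Lemma~\ref{lem:regpseud} rather than from Theorem~\ref{thm:prop8}. Both arguments hinge on the same two observations — that $f=0$ on the hyperplanes forces $\widetilde g=\widetilde h$ there, and that subtracting a box-$\mathbf n$-affine interpolant preserves box-$\mathbf n$-convexity — so your extra layer costs nothing and arguably makes the dependence on Lemma~\ref{lem:dif1} cleaner. The only cosmetic point is that Lemma~\ref{lem:regpseud} is stated for pseudo-polynomials of degree $\mathbf n$, while your $\varphi$ has degree $(n_1-1,\dots,n_d-1)$; viewing it as a degree-$\mathbf n$ pseudo-polynomial whose top coefficients are zero (and hence trivially regular), the lemma applies verbatim.
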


\begin{proof}
The proof of Lemma~\ref{lem:dif2} is, essentially, the same as the proof of Lemma~\ref{lem:dif1}.
The only difference is that, according to Lemma~\ref{lem:pseudo2}, we can choose the pseudo-polynomial $W$ such that the following additional condition is satisfied:\\
$W(x_1,x_2,\dots,x_d)=\widetilde g(x_1,x_2,\dots,x_d)$, whenever $x_i=u_{ij}$ for $j=1,2,\dots,n_i$.
\end{proof}

\section{Integration and differentiation of $\mathbf n$-regular functions}

In this section we assume that all intervals $I_1, I_2,\dots,I_d$ are open intervals (bounded or unbounded).

\begin{Lem}\label{lem:lem15v2}
Let $i\in\{1,2,\dots,d\}$ be fixed and $\alpha_i\in I_i$. Let $\mathbf n=(n_1,n_2,\dots,n_d)\in\mathbb N^d$ be such that $n_i\geq2$ and let $f\colon\mathbf I\to\mathbb R$ be an $\mathbf n$-regular function.

Let $\psi\colon\mathbf I\to\mathbb R$ be right-derivative of $f$ with respect to the $i$th variable, i.e. $\psi$ is such that for every $\mathbf y\in\mathbf I_{\{i\}'}$ the function
$\psi_{\{i\}}^{\mathbf y}$ is the right-derivative of $f_{\{i\}}^{\mathbf y}$.
Then, the function $\psi$ is well defined. Moreover, $\psi$ is a~$(n_1,\dots,n_i-1,\dots,n_d)$-regular function, and it satisfies the equation $f(x_1,\dots,x_d)=$\\$f(x_1,\dots,\alpha_i,\dots,x_d)+\int_{\alpha_i}^{x_i} \psi(x_1,\dots,t,\dots,x_d)dt$, where $t$ and $\alpha_i$ stand at the $i$th position.

If, in addition, $f$ is box-$\mathbf n$-convex, then the function $\psi$ is box-$(n_1,\dots,n_i-1,\dots,n_d)$-convex.
\end{Lem}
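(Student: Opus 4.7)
The plan is to establish the four claims in the order stated, writing $\mathbf{n}' := (n_1, \ldots, n_i - 1, \ldots, n_d)$. By Lemma~\ref{lem:dif1}, one may reduce (for the box-convexity and regularity parts) to the case where $f$ is $\mathbf{n}$-regular and additionally box-$\mathbf{n}$-convex, handling the two summands separately.

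Well-definedness of $\psi$ and the integral formula are one-dimensional considerations in the $i$-direction. By $\mathbf{n}$-regularity, each slice $f_{\{i\}}^{\mathbf{y}}$ is a linear combination of box-$(n_i)$-convex (equivalently, classically $(n_i - 1)$-convex) functions on the open interval $I_i$; since $n_i \geq 2$, these are at least convex, hence continuous, right-differentiable, and locally absolutely continuous. The slice-by-slice fundamental theorem of calculus then gives both $(f_{\{i\}}^{\mathbf{y}})'_R = \psi_{\{i\}}^{\mathbf{y}}$ and the integral formula. For box-$\mathbf{n}'$-convexity (when $f$ is additionally box-$\mathbf{n}$-convex), Remark~\ref{rem:rem1} reduces the multiple divided difference of $\psi$ at admissible points to a one-variable divided difference $[\mathbf{x}_i'; h'_R]$, where $h\colon I_i \to \R$ collects the divided differences of $f$ in the other variables (so that $h'_R$ is the analogous object for $\psi$ by linearity); Lemma~\ref{lem:pochodna} expands this as a sum of limits of $(n_i+1)$-point divided differences of $h$, each of which equals a full multiple divided difference of $f$ and is $\geq 0$ by box-$\mathbf{n}$-convexity.

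The main obstacle is $\mathbf{n}'$-regularity: for every $A \subset \{1, \ldots, d\}$ and $\mathbf{z} \in \mathbf{I}_{A'}$, one must show that $\psi_A^{\mathbf{z}}$ is a linear combination of box-$\mathbf{n}'_A$-convex functions. The case $A = \{1, \ldots, d\}$ is exactly box-$\mathbf{n}'$-convexity. When $i \in A$ (say $a_j = i$), $\psi_A^{\mathbf{z}}$ is the right-derivative of $f_A^{\mathbf{z}}$ in the $j$-th coordinate; decomposing the $\mathbf{n}_A$-regular function $f_A^{\mathbf{z}}$ (Remark~\ref{rem:rem6}(b)) via Lemma~\ref{lem:dif1} and applying the one-step sub-claim ``box-$\mathbf{m}$-convex $\Rightarrow$ box-$(\mathbf{m} - e_j)$-convex right-derivative'' (proved exactly as in the previous paragraph) settles this case. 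When $i \in A'$ and $B := A \cup \{i\} \subsetneq \{1, \ldots, d\}$, induct on the dimension $d$: the induction hypothesis applied to the $\mathbf{n}_B$-regular function $f_B^{\mathbf{z}_{B'}}$ on $\mathbf{I}_B$ yields $\mathbf{n}'_B$-regularity of its right-derivative in the $i$-direction, whose restriction to $x_i = z_k$ provides the required representation (since $i \notin A$ forces $(\mathbf{n}'_B)_A = \mathbf{n}_A = \mathbf{n}'_A$).

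The genuinely hard sub-case, where induction on $d$ does not apply, is $A' = \{i\}$. Here the plan is to first subtract from $f$ the Lagrange-interpolating pseudo-polynomial $L$ supplied by Lemma~\ref{lem:pseudo1} in the $i$-direction at nodes $u_1 = z_k, u_2, \ldots, u_{n_i}$, chosen so that $\prod_{j \neq 1}(u_1 - u_j) > 0$. Because $f$ is $\mathbf{n}$-regular, the interpolation coefficients of $L$ are $\mathbf{n}_{\{i\}'}$-regular, so $L$ is $\mathbf{n}$-regular (Lemma~\ref{lem:regpseud}) and box-$\mathbf{n}$-affine (Lemma~\ref{lem:pseudo3}), and the contribution of the right-derivative of $L$ to the slice at $x_i = z_k$ is $\mathbf{n}_{\{i\}'}$-regular. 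The residual $\tilde f := f - L$ is $\mathbf{n}$-regular, box-$\mathbf{n}$-convex, and vanishes on all $n_i$ slices $x_i = u_l$, so Lemma~\ref{lem:dif2} gives $\tilde f = \tilde f_1 - \tilde f_2$ with each $\tilde f_l$ $\mathbf{n}$-regular, box-$\mathbf{n}$-convex and vanishing on the same slices. The proof method of Theorem~\ref{thm:prop8} applied to $\tilde f_l$ with fixed value $z_k + h$ (for small $h > 0$) shows that $(\tilde f_l)_A^{(z_k + h)}$ is box-$\mathbf{n}_A$-convex, the denominator $\prod_j (z_k + h - u_j) = h \prod_{j \neq 1}(z_k + h - u_j)$ being positive; hence $F_h := h^{-1} (\tilde f_l)_A^{(z_k + h)}$ is box-$\mathbf{n}_A$-convex, and the vanishing $\tilde f_l(\mathbf{y}, z_k) = 0$ makes $F_h(\mathbf{y})$ equal to the difference quotient, which converges pointwise to $(\partial_R^{(i)} \tilde f_l)|_{x_i = z_k}(\mathbf{y})$ as $h \to 0^+$. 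Since box-$\mathbf{n}_A$-convexity is a non-negativity condition on finite linear combinations of function values and is therefore preserved under pointwise limits, $(\partial_R^{(i)} \tilde f_l)|_{x_i = z_k}$ is box-$\mathbf{n}_A$-convex, and subtraction furnishes the required linear combination of box-$\mathbf{n}'_A$-convex functions.
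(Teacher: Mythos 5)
Your proof is correct and follows the same skeleton as the paper's: the one-dimensional reduction for well-definedness and the integral formula, the combination of Remark~\ref{rem:rem1} with Lemma~\ref{lem:pochodna} for box-convexity of $\psi$, and the split of the regularity argument into $i\in A$ (handled via Lemma~\ref{lem:dif1} and the first part) versus $i\in A'$ (handled by subtracting the interpolating pseudo-polynomial anchored at $z_l$, decomposing via Lemma~\ref{lem:dif2}, and exploiting the sign of $\prod_j(z_l+h-u_j)$ through box-$\mathbf n$-convexity). You deviate in two minor but legitimate ways. First, for $i\in A'$ with $A\cup\{i\}\subsetneq\{1,\dots,d\}$ you invoke an induction on $d$ applied to $f_{A\cup\{i\}}^{\widetilde{\mathbf z}}$, whereas the paper treats all of $i\in A'$ uniformly with the direct argument; your reduction is valid (the identification $(\mathbf n'_{A\cup\{i\}})_A=\mathbf n_A=\mathbf n'_A$ is exactly what makes it close) but not strictly necessary, since your $A'=\{i\}$ argument works verbatim for any $A$ with $i\in A'$ after passing to $\widetilde f_{A\cup\{i\}}^{\ \widetilde{\mathbf z}}$. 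Second, in the hard sub-case you replace the paper's use of formula \eqref{eq:poch} by the observation that the normalized slice $h^{-1}(\tilde f_l)_A^{(z_k+h)}$ is box-$\mathbf n_A$-convex and converges pointwise to the difference-quotient limit, together with the (correct, by Proposition~\ref{prop:prop5}) remark that box-$\mathbf n_A$-convexity passes to pointwise limits; this is the same computation as the paper's, expressed without Lemma~\ref{lem:pochodna}, and your choice to fix $\prod_{j\neq1}(u_1-u_j)>0$ merely sharpens the paper's ``convex or concave'' conclusion to ``convex''.
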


\begin{proof}
For every $\mathbf y\in\mathbf I_{\{i\}'}$ the function
$f_{\{i\}}^{\mathbf y}$ is a linear combination of box-$(n_i)$-convex functions. Taking into account Remark~\ref{rem:onedim} and $n_i\geq2$, we obtain that $\psi$ is well defined and $f(\mathbf x)=f_{\{i\}}^{\mathbf y}(x_i)=f_{\{i\}}^{\mathbf y}(\alpha_i)+\int_{\alpha_i}^{x_i} \psi_{\{i\}}^{\mathbf y}(t)dt$ for $\mathbf y=\mathbf x_{\{i\}'}$.

First we show that if $f$ is $\mathbf n$-regular and box-$\mathbf n$-convex, then $\psi$ is box-$(n_1,\dots,n_i-1,\dots,n_d)$-convex.
Let $\mathbf x_k=(x_{k0},x_{k1},\dots,x_{kn_k})\in I_k^{n_k+1}$ for $k\in\{1,2,\dots,d\}\setminus\{i\}$, and $\mathbf x_i=(x_{i1},x_{i2},\dots,x_{in_i})\in I_i^{n_i}$ be vectors with pairwise distinct coordinates.
Using Remark~\ref{rem:rem1} and Lemma~\ref{lem:pochodna}, we obtain
\begin{multline*}
\left[\ \LARGE{\substack{\mathbf x_1\\\\\mathbf x_2\\\\\dots\\\\\mathbf x_d}};\psi\right]
=\left[\ \LARGE{\substack{\mathbf x_1\\\\\dots\\\\\mathbf x_{i-1}\\\\\mathbf x_{i+1}\\\\\dots\\\\\mathbf x_d}};\left[\mathbf x_i;\psi_{\{i\}}\right]\right]=\left[\ \LARGE{\substack{\mathbf x_1\\\\\dots\\\\\mathbf x_{i-1}\\\\\mathbf x_{i+1}\\\\\dots\\\\\mathbf x_d}};\left[x_{i1}, x_{i2},\ldots, x_{in_i};\psi_{\{i\}}\right]\right]\\
=\left[\ \LARGE{\substack{\mathbf x_1\\\\\dots\\\\\mathbf x_{i-1}\\\\\mathbf x_{i+1}\\\\\dots\\\\\mathbf x_d}};\sum_{j=1}^{n_i}\lim_{x_{i0}\downarrow x_{ij}}\left[x_{i0}, x_{i1},\ldots, x_{in_i};f_{\{i\}}\right]\right]
=\sum_{j=1}^{n_i}\lim_{x_{i0}\downarrow x_{ij}}\left[\LARGE{\substack{x_{10},\ x_{11},\ldots,\ x_{1n_1}\\\\x_{20},\ x_{21},\ldots,\ x_{2n_2}\\\\\dots\\\\x_{d0},\ x_{d1},\ldots,\ x_{dn_d}}};f\right]\geq0
\end{multline*}
(the last inequality follows from the box-$\mathbf n$-convexity of $f$).
We conclude that $\psi$ is box-$(n_1,\dots,n_i-1,\dots,n_d)$-convex.

Now we prove that $\psi$ is $(n_1,\dots,n_i-1,\dots,n_d)$-regular.
We put $\widetilde{\mathbf n}=(n_1,\dots,n_i-1,\dots,n_d)$. Let $A\subset\{1,2,\dots,d\}$ and $\mathbf z\in\mathbf I_{A'}$ be fixed.
We need to show that $\psi_A^{\mathbf z}$ is a linear combination of box-$\widetilde{\mathbf n}_A$-convex functions.
Let $(a_1,a_2,\dots,a_{|A|})$ be the ordered sequence of the elements of $A$ (i.e., $A=\{a_1,a_2,\dots,a_{|A|}\}$ and $a_1<a_2<\dots<a_{|A|}$). Similarly, let $(a'_1,a'_2,\dots,a'_{|A'|})$ be the ordered sequence of the elements of $A'$.
We have two cases: $i\in A$ or $i\notin A$.

First we consider the case $i\in A$. Then there exists $l$ such that $i=a_l$. By Lemma~\ref{lem:dif1}, there exist two $\mathbf n_A$-regular and box-$\mathbf n_A$-convex functions $g,h\colon\mathbf I_A\to\mathbb R$ such that $f_A^{\mathbf z}=g-h$. Let $\xi$ and $\eta$ be right-derivatives of $g$ and $h$, respectively, with respect to the $l$th variable. By the first part of the proof we obtain that $\xi$ and $\eta$ are box-$\widetilde{\mathbf n}_A$-convex functions. Consequently, $\psi_A^{\mathbf z}=\xi-\eta$ is a linear combination of box-$\widetilde{\mathbf n}_A$-convex functions.

Now we pass to the case $i\notin A$. In that case $\widetilde{\mathbf n}_A=\mathbf n_A$. Since $i\in A'$, there exists $l$ such that $i=a'_l$. We see that both $f_A^{\mathbf z}$ and $\psi_A^{\mathbf z}$ do not depend on $x_i$ (the $i$th variable of $f$ and $\psi$), which is fixed at $z_l$.
We fix pairwise distinct $u_1,u_2,\dots,u_{n_i}\in I_i$ such that $u_1=z_l$. By Lemma~\ref{lem:pseudo1}, there exists a~pseudo-polynomial $W$ of the form $W(x_1,x_2,\dots,x_d)=\sum_{k=0}^{n_i-1}A_{ik}(\mathbf x_{\{i\}'})x_i^k$ such that
$f(x_1,x_2,\dots,x_d)=W(x_1,x_2,\dots,x_d)$, whenever $x_i=u_j$ for $j=1,2,\dots,n_i$. Then the coefficients $A_{i0},A_{i1},\dots,A_{i,n_i-1}$ are
 $\mathbf n_{\{i\}'}$-regular functions.
We denote $\widetilde f=f-W$. Let $\widetilde \psi$ be a right-derivative of $\widetilde f$ with respect to the $i$th variable $x_i$. Then $\widetilde \psi=\psi-V$, where $V(\mathbf x)=\sum_{k=1}^{n_i-1}A_{ik}(\mathbf x_{\{i\}'})kx_i^{k-1}$. Obviously, the pseudo-polynomial $V$ is $\widetilde{\mathbf n}$-regular.

We consider the function $\widetilde f_{A\cup\{i\}}^{\ \widetilde{\mathbf z}}$, where $\widetilde{\mathbf z}=(z_1,\dots,z_{l-1},z_{l+1},\dots,z_{|A'|})$.
Without loss of generality (changing the order of variables, if necessary), we have
$\widetilde f_{A\cup\{i\}}^{\ \widetilde{\mathbf z}}\colon I_i\times\prod_{k\in A}I_k\to\mathbb R$. Using Lemma~\ref{lem:dif2} we obtain two $\mathbf n_{A\cup\{i\}}$-regular and box-$\mathbf n_{A\cup\{i\}}$-convex functions $g,h\colon I_i\times\prod_{k\in A}I_k\to\mathbb R$ such that $\widetilde f_{A\cup\{i\}}^{\ \widetilde{\mathbf z}}=g-h$ and $g(v_0,v_1,\dots,v_{|A|})=h(v_0,v_1,\dots,v_{|A|})=0$, whenever $v_0=u_j$ for $j=1,2,\dots,n_i$.

Let $\xi$ and $\eta$ be right-derivatives of $g$ and $h$, respectively, with respect to the variable $v_0$.
By \eqref{eq:poch},  for $v_0=u_1=z_l$ we obtain
$$\xi_{\{0\}'}^{(u_1)}(\widetilde{\mathbf v})=\xi_{\{0\}}^{\ \widetilde{\mathbf v}}(u_1)=\lim_{u_0\downarrow u_1}[u_0,u_1,u_2,\dots,u_{n_i};g_{\{0\}}^{\ \widetilde{\mathbf v}}]\cdot\prod_{j=2}^{n_i}(u_1-u_j),$$
where $\widetilde{\mathbf v}=(v_1,v_2,\dots,v_{|A|})$. Analogous identity can be obtained for $h$ and $\eta$.

For $k=1,2,\dots,|A|$, let $v_{k0},v_{k1},\dots,v_{kn_{a_k}}\in I_{a_k}$ be pairwise distinct and let $\mathbf v_k=(v_{k0},v_{k1},\dots,v_{kn_{a_k}})$.

\begin{multline*}
\left[\ \LARGE{\substack{\mathbf v_1\\\\\mathbf v_2\\\\\dots\\\\\mathbf v_{|A|}}};\xi_{\{0\}'}^{(u_1)}\right]
=\left[\ \LARGE{\substack{\mathbf v_1\\\\\mathbf v_2\\\\\dots\\\\\mathbf v_{|A|}}};\lim_{u_0\downarrow u_1}[u_0,u_1,\dots,u_{n_i};g_{\{0\}}]\cdot\prod_{j=2}^{n_i}(u_1-u_j)\right]\\
=\prod_{j=2}^{n_i}(u_1-u_j)\cdot\lim_{u_0\downarrow u_1}\left[\ \LARGE{\substack{\mathbf v_1\\\\\mathbf v_2\\\\\dots\\\\\mathbf v_{|A|}}};[u_0,u_1,\dots,u_{n_i};g_{\{0\}}]\right]
=\prod_{j=2}^{n_i}(u_1-u_j)\cdot\lim_{u_0\downarrow u_1}\left[\ \LARGE{\substack{u_0,\ u_1,\dots,\ u_{n_i}\\\\v_{10},\ v_{11},\ \dots,\ v_{1n_{a_1}}\\\\\dots\\\\v_{|A|0},\ v_{|A|1},\ \dots,\ v_{|A|n_{a_{|A|}}}}};g\right],
\end{multline*}
which has the same sign as $\prod_{j=2}^{n_i}(u_1-u_j)$ (by box-$\mathbf n_{A\cup\{i\}}$-convexity of $g$). It follows that $\xi_{\{0\}'}^{(u_1)}$ is either box-$\mathbf n_A$-convex or box-$\mathbf n_A$-concave.
Analogously, we obtain that $\eta_{\{0\}'}^{(u_1)}$ is either box-$\mathbf n_A$-convex or box-$\mathbf n_A$-concave. Therefore the function $\widetilde\psi_A^{\mathbf z}=\xi_{\{0\}'}^{(u_1)}-\eta_{\{0\}'}^{(u_1)}$ is a linear combination of box-$\mathbf n_A$-convex functions. Taking into account $\widetilde{\mathbf n}_A=\mathbf n_A$ and the equality $\psi_A^{\mathbf z}=\widetilde\psi_A^{\mathbf z}+V_A^{\mathbf z}$, we see, that
$\psi_A^{\mathbf z}$ is a linear combination of box-$\widetilde{\mathbf n}_A$-convex functions.
\end{proof}

\begin{Lem}\label{lem:calka}
Let $i\in\{1,2,\dots,d\}$ be fixed and $\alpha_i\in I_i$. Let $\mathbf n=(n_1,n_2,\dots,n_d)\in\mathbb N^d$ be such that $n_i\geq1$ and let $f\colon\mathbf I\to\mathbb R$ be an $\mathbf n$-regular function.

Let $F\colon\mathbf I\to\mathbb R$ be the function given by the formula $$
F(x_1,\dots,x_d)=\int_{\alpha_i}^{x_i} f(x_1,\dots,t,\dots,x_d)dt,
$$
where $t$ stands at the $i$th position.
Then $F$ is well defined and it is an $(n_1,\dots,n_i+1,\dots,n_d)$-regular function.

If, in addition, $f$ is box-$\mathbf n$-convex, then the function $F$ is box-$(n_1,\dots,n_i+1,\dots,n_d)$-convex.
\end{Lem}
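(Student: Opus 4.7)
The plan is to verify the three assertions---well-definedness, box-$\mathbf n^+$-convexity under box-$\mathbf n$-convexity of $f$, and $\mathbf n^+$-regularity---in sequence, where I abbreviate $\mathbf n^+=(n_1,\dots,n_i+1,\dots,n_d)$. Well-definedness follows because the $\mathbf n$-regularity of $f$ makes each slice $f_{\{i\}}^{\mathbf y}\colon I_i\to\R$ a linear combination of box-$(n_i)$-convex one-variable functions, and Remark~\ref{rem:onedim} guarantees these are at worst monotone (since $n_i\geq1$), hence locally integrable. For the box-convexity claim, I would apply Lemma~\ref{lem:il.podz.} to the primitive $F_{\{i\}}^{\mathbf y}$ in the $i$-th direction, then push the divided differences in the remaining variables through the integral using Remark~\ref{rem:rem1} and linearity. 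This represents the full $\mathbf n^+$-order divided difference of $F$ as $\int_0^1 t^{n_i}[\mathbf x_1,\dots,\widetilde{\mathbf x}_i(t),\dots,\mathbf x_d;f]\,dt$, with $\widetilde{\mathbf x}_i(t)=(x_{i,1,t},\dots,x_{i,n_i+1,t})$; both the weight $t^{n_i}$ and the inner divided difference are nonnegative, so the left side is too.

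For $\mathbf n^+$-regularity, I would proceed by induction on $d$, fixing $A\subset\{1,\dots,d\}$ and $\mathbf z\in\mathbf I_{A'}$ and showing $F_A^{\mathbf z}$ is a linear combination of box-$\mathbf n^+_A$-convex functions. The case $A=\{1,\dots,d\}$ follows at once by applying the box-convexity step to every summand in a decomposition $f=\sum_j c_jf_j$ with $f_j$ box-$\mathbf n$-convex. When $i\in A$ with $|A|<d$, writing $i=a_m$, the identity $F_A^{\mathbf z}(\mathbf y)=\int_{\alpha_i}^{y_m}f_A^{\mathbf z}(y_1,\dots,t,\dots,y_{|A|})\,dt$ shows that $F_A^{\mathbf z}$ arises by integration of the $\mathbf n_A$-regular function $f_A^{\mathbf z}$ (Remark~\ref{rem:rem6}(b)) in the $m$-th direction, so the inductive hypothesis applied in dimension $|A|$ yields $(\mathbf n_A)^+=\mathbf n^+_A$-regularity.

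The remaining case $i\notin A$ is the main obstacle and requires a pseudo-polynomial trick in parallel with the $i\notin A$ case of Lemma~\ref{lem:lem15v2}'s proof. Writing $i=a'_l$ and assuming $\alpha_i\neq z_l$ (otherwise $F_A^{\mathbf z}\equiv0$), I would use Lemma~\ref{lem:pseudo1} in the $i$-direction with pairwise distinct $u_1=\alpha_i,u_2,\dots,u_{n_i}\in I_i$, where $u_2,\dots,u_{n_i}$ are chosen to lie outside the closed interval with endpoints $\alpha_i$ and $z_l$ (always possible since $I_i$ is open). This gives $f=W+\widetilde f$ with $W(\mathbf x)=\sum_{k=0}^{n_i-1}A_{ik}(\mathbf x_{\{i\}'})x_i^k$ whose coefficients $A_{ik}$ are $\mathbf n_{\{i\}'}$-regular (being linear combinations of the regular slices $f_{\{i\}'}^{u_j}$), and $\widetilde f$ vanishing whenever $x_i\in\{u_1,\dots,u_{n_i}\}$. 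Integrating, $F=V+\widetilde F$; the restriction $V_A^{\mathbf z}$ is a linear combination of restrictions of the $A_{ik}$'s and so, by Remark~\ref{rem:rem6}(b), a linear combination of box-$\mathbf n_A$-convex functions.

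To finish, I analyze $\widetilde F_A^{\mathbf z}(\mathbf y)=\int_{\alpha_i}^{z_l}\widetilde f_{A\cup\{i\}}^{\widetilde{\mathbf z}}(\mathbf y,t)\,dt$ (with $\widetilde{\mathbf z}$ obtained from $\mathbf z$ by omitting $z_l$). Since $\widetilde f_{A\cup\{i\}}^{\widetilde{\mathbf z}}$ is $\mathbf n_{A\cup\{i\}}$-regular and still vanishes at $x_i=u_j$, Lemma~\ref{lem:dif2} applied in dimension $|A|+1$ gives $\widetilde f_{A\cup\{i\}}^{\widetilde{\mathbf z}}=\hat g-\hat h$ with $\hat g,\hat h$ box-$\mathbf n_{A\cup\{i\}}$-convex and vanishing at $x_i=u_j$. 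The vanishing, combined with the expanded form of the divided difference (Proposition~\ref{prop:1.2}), yields the factorization $\hat g(\mathbf y,t)=\prod_{j=1}^{n_i}(t-u_j)\cdot[u_1,\dots,u_{n_i},t;\hat g(\mathbf y,\cdot)]$ for $t\notin\{u_j\}$. Passing the $\mathbf y$-divided differences through the integral and applying Remark~\ref{rem:rem1}, the integrand of $[\mathbf y_{a_1},\dots,\mathbf y_{a_{|A|}};\int_{\alpha_i}^{z_l}\hat g(\cdot,t)\,dt]$ becomes $\prod_{j=1}^{n_i}(t-u_j)$ times the full divided difference of $\hat g$ over the vectors $\mathbf y_{a_1},\dots,\mathbf y_{a_{|A|}},(u_1,\dots,u_{n_i},t)$, which is $\geq0$ by box-$\mathbf n_{A\cup\{i\}}$-convexity. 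The engineered choice of the $u_j$ makes $\prod_{j=1}^{n_i}(t-u_j)$ have constant sign on $[\alpha_i,z_l]$, so the integral has a definite sign and $\int_{\alpha_i}^{z_l}\hat g(\cdot,t)\,dt$ is either box-$\mathbf n_A$-convex or box-$\mathbf n_A$-concave; the same for $\hat h$. Combining, $\widetilde F_A^{\mathbf z}$ and hence $F_A^{\mathbf z}$ is a linear combination of box-$\mathbf n^+_A=\mathbf n_A$-convex functions, completing the induction. The main difficulty lies precisely in engineering the constant-sign factorization, which is what forces the choices $u_1=\alpha_i$ and the placement of $u_2,\dots,u_{n_i}$ outside the integration interval.
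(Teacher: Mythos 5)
Your proposal follows essentially the same route as the paper: well-definedness from $\mathbf n$-regularity and Remark~\ref{rem:onedim}; box-convexity of $F$ via Remark~\ref{rem:rem1} and Lemma~\ref{lem:il.podz.}; and, for regularity, the case split $i\in A$ versus $i\notin A$, with the $i\notin A$ case resolved exactly as in the paper by subtracting an interpolating pseudo-polynomial (Lemma~\ref{lem:pseudo1}), applying Lemma~\ref{lem:dif2}, factoring out $\prod_{j}(t-u_j)$ via the expanded form, and arranging the nodes so that this product has constant sign on the integration interval. Your node placement ($u_1=\alpha_i$ and the rest outside the interval) differs from the paper's (all $u_j<\min(\alpha_i,z_l)$) but achieves the same constant-sign property and is equally valid. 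The one step that does not work as written is your treatment of $A=\{1,\dots,d\}$: the summands $f_j$ in a decomposition $f=\sum_j c_jf_j$ coming from Definition~\ref{def:regular} are only box-$\mathbf n$-convex, not $\mathbf n$-regular, so their slices in the $i$th variable need not be integrable (e.g.\ $f_j(x_1,\dots,x_d)=h(x_1)$ with $h$ non-measurable is box-$\mathbf n$-convex), and $\int_{\alpha_i}^{x_i}f_j$ may fail to exist. The repair is exactly what the paper does for the whole case $i\in A$: use Lemma~\ref{lem:dif1} to write $f_A^{\mathbf z}=g-h$ with $g,h$ both $\mathbf n_A$-regular \emph{and} box-$\mathbf n_A$-convex, then integrate $g$ and $h$. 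With that substitution your argument is complete; your use of induction on the dimension for $i\in A$, $|A|<d$ is a harmless repackaging of the paper's direct appeal to the already-proven convexity part in dimension $|A|$.
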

\begin{proof}
For every $\mathbf y\in\mathbf I_{\{i\}'}$ the function
$f_{\{i\}}^{\mathbf y}$ is a linear combination of box-$(n_i)$-convex functions. Taking into account Remark~\ref{rem:onedim} and $n_i\geq1$, we obtain that $F$ is well defined.

First we show that if $f$ is $\mathbf n$-regular and box-$\mathbf n$-convex, then $F$ is box-$(n_1,\dots,n_i+1,\dots,n_d)$-convex.
Let $\mathbf x_k=(x_{k0},x_{k1},\dots,x_{kn_k})\in I_k^{n_k+1}$ for $k\in\{1,2,\dots,d\}\setminus\{i\}$, and $\mathbf x_i=(x_{i0},x_{i1},\dots,x_{i,n_i+1})\in I_i^{n_i+2}$ be vectors with pairwise distinct coordinates.
Using Remark~\ref{rem:rem1} and Lemma~\ref{lem:il.podz.}, we obtain
\begin{multline*}
\left[\ \LARGE{\substack{\mathbf x_1\\\\\mathbf x_2\\\\\dots\\\\\mathbf x_d}};F\right]
=\left[\ \LARGE{\substack{\mathbf x_1\\\\\dots\\\\\mathbf x_{i-1}\\\\\mathbf x_{i+1}\\\\\dots\\\\\mathbf x_d}};\left[\mathbf x_i;F_{\{i\}}\right]\right]=\left[\ \LARGE{\substack{\mathbf x_1\\\\\dots\\\\\mathbf x_{i-1}\\\\\mathbf x_{i+1}\\\\\dots\\\\\mathbf x_d}};\left[x_{i0}, x_{i1},\ldots, x_{i,n_i+1};F_{\{i\}}\right]\right]\\
=\left[\ \LARGE{\substack{\mathbf x_1\\\\\dots\\\\\mathbf x_{i-1}\\\\\mathbf x_{i+1}\\\\\dots\\\\\mathbf x_d}};\int_0^1t^{n_i}\left[y_{1,t}, y_{2,t},\ldots, y_{n_i+1,t};f_{\{i\}}\right]dt\right]
=\int_0^1t^{n_i}\left[\LARGE{\substack{x_{10},\ x_{11},\ldots,\ x_{1n_1}\\\\\dots\\\\y_{1,t},\ y_{2,t},\ldots,\ y_{n_i+1,t}\\\\\dots\\\\x_{d0},\ x_{d1},\ldots,\ x_{dn_d}}};f\right]dt\geq0,
\end{multline*}
where $y_{j,t}=tx_{ij}+(1-t)x_{i0}$ for $t\in[0,1]$ and $j=1,2,\dots,n_i+1$.
The~last inequality follows from the box-$\mathbf n$-convexity of $f$.
We conclude that $F$ is box-$(n_1,\dots,n_i+1,\dots,n_d)$-convex.

Now we prove that $F$ is $(n_1,\dots,n_i+1,\dots,n_d)$-regular.
We put $\widetilde{\mathbf n}=(n_1,\dots,n_i+1,\dots,n_d)$. Let $A\subset\{1,2,\dots,d\}$ and $\mathbf z\in\mathbf I_{A'}$ be fixed.
We need to show that $F_A^{\mathbf z}$ is a linear combination of box-$\widetilde{\mathbf n}_A$-convex functions.
Let $(a_1,a_2,\dots,a_{|A|})$ be the ordered sequence of the elements of $A$ (i.e., $A=\{a_1,a_2,\dots,a_{|A|}\}$ and $a_1<a_2<\dots<a_{|A|}$). Similarly, let $(a'_1,a'_2,\dots,a'_{|A'|})$ be the ordered sequence of the elements of $A'$.
We have two cases: $i\in A$ or $i\notin A$.

First we consider the case $i\in A$. Then there exists $l$ such that $i=a_l$. By Lemma~\ref{lem:dif1}, there exist two $\mathbf n_A$-regular and box-$\mathbf n_A$-convex functions $g,h\colon\mathbf I_A\to\mathbb R$ such that $f_A^{\mathbf z}=g-h$. Let $G(y_1,\dots,y_{|A|})=\int_{\alpha_i}^{y_l} g(y_1,\dots,t,\dots,y_{|A|})dt$ and $H(y_1,\dots,y_{|A|})=\int_{\alpha_i}^{y_l} h(y_1,\dots,t,\dots,y_{|A|})dt$, where $t$ stands at the $l$th position.
By the first part of the proof we obtain that $G$ and $H$ are box-$\widetilde{\mathbf n}_A$-convex functions. Consequently, $F_A^{\mathbf z}=G-H$ is a linear combination of box-$\widetilde{\mathbf n}_A$-convex functions.

Now we pass to the case $i\notin A$. In that case $\widetilde{\mathbf n}_A=\mathbf n_A$. Since $i\in A'$, there exists $l$ such that $i=a'_l$. We see that both $f_A^{\mathbf z}$ and $F_A^{\mathbf z}$ do not depend on $x_i$ (the $i$th variable of $f$ and $F$), which is fixed at $z_l$.
We fix pairwise distinct $u_1,u_2,\dots,u_{n_i}\in I_i$ such that $u_j<\min(\alpha_i,z_l)$ for $j=1,2,\dots,n_i$. By Lemma~\ref{lem:pseudo1}, there exists a~pseudo-polynomial $W$ of the form $W(x_1,x_2,\dots,x_d)=\sum_{k=0}^{n_i-1}A_{ik}(\mathbf x_{\{i\}'})x_i^k$ such that
$f(x_1,x_2,\dots,x_d)=W(x_1,x_2,\dots,x_d)$, whenever $x_i=u_j$ for $j=1,2,\dots,n_i$. Then the coefficients $A_{i0},A_{i1},\dots,A_{i,n_i-1}$ are
 $\mathbf n_{\{i\}'}$-regular functions.
We denote $\widetilde f=f-W$. Let 
$$
\widetilde F(x_1,\dots,x_d)=\int_{\alpha_i}^{x_i}\widetilde f(x_1,\dots,t,\dots,x_d)dt,
$$ where $t$ stands at the $i$th position. Then $\widetilde F=F-V$, where
$$
V(\mathbf x)=\sum_{k=0}^{n_i-1}A_{ik}(\mathbf x_{\{i\}'})\frac{x_i^{k+1}-\alpha_i^{k+1}}{k+1}.
$$
Obviously, the pseudo-polynomial $V$ is $\widetilde{\mathbf n}$-regular.

We consider the function $\widetilde f_{A\cup\{i\}}^{\ \widetilde{\mathbf z}}$, where $\widetilde{\mathbf z}=(z_1,\dots,z_{l-1},z_{l+1},\dots,z_{|A'|})$.
Without loss of generality (changing the order of variables, if necessary), we have
$\widetilde f_{A\cup\{i\}}^{\ \widetilde{\mathbf z}}\colon I_i\times\prod_{k\in A}I_k\to\mathbb R$. Using Lemma~\ref{lem:dif2} we obtain two $\mathbf n_{A\cup\{i\}}$-regular and box-$\mathbf n_{A\cup\{i\}}$-convex functions $g,h\colon I_i\times\prod_{k\in A}I_k\to\mathbb R$ such that $\widetilde f_{A\cup\{i\}}^{\ \widetilde{\mathbf z}}=g-h$ and $g(v_0,v_1,\dots,v_{|A|})=h(v_0,v_1,\dots,v_{|A|})=0$, whenever $v_0=u_j$ for $j=1,2,\dots,n_i$.
Let $G(y_0,y_1,\dots,y_{|A|})=\int_{\alpha_i}^{y_0} g(t,y_1,\dots,y_{|A|})dt$ and $H(y_0,y_1,\dots,y_{|A|})=\int_{\alpha_i}^{y_0} h(t,y_1,\dots,y_{|A|})dt$.

For $k=1,2,\dots,|A|$, let $v_{k0},v_{k1},\dots,v_{kn_{a_k}}\in I_{a_k}$ be pairwise distinct and let $\mathbf v_k=(v_{k0},v_{k1},\dots,v_{kn_{a_k}})$.
By Remark~\ref{rem:rem1} and \eqref{eq:expanded}, for every $t$ between $\alpha_i$ and $z_l$ we have
\begin{multline*}
\left[\ \LARGE{\substack{t,\ u_1,\dots,\ u_{n_i}\\\\v_{10},\ v_{11},\ \dots,\ v_{1n_{a_1}}\\\\\dots\\\\v_{|A|0},\ v_{|A|1},\ \dots,\ v_{|A|n_{a_{|A|}}}}};g\right]
=\left[\ \LARGE{\substack{\mathbf v_1\\\\\mathbf v_2\\\\\dots\\\\\mathbf v_{|A|}}};[t,u_1,\dots,u_{n_i};g_{\{0\}}]\right]\\
=\left[\ \LARGE{\substack{\mathbf v_1\\\\\mathbf v_2\\\\\dots\\\\\mathbf v_{|A|}}};\frac{g_{\{0\}'}^{(t)}}{\prod_{j=1}^{n_i}(t-u_j)}\right]
=\frac1{\prod_{j=1}^{n_i}(t-u_j)}\left[\ \LARGE{\substack{\mathbf v_1\\\\\mathbf v_2\\\\\dots\\\\\mathbf v_{|A|}}};g_{\{0\}'}^{(t)}\right].
\end{multline*}
Since $t-u_j>t-\min(\alpha_i,z_l)\geq0$ for each $j$, and $g$ is box-$\mathbf n_{A\cup\{i\}}$-convex, we obtain that $\left[\ \LARGE{\substack{\mathbf v_1\\\\\mathbf v_2\\\\\dots\\\\\mathbf v_{|A|}}};g_{\{0\}'}^{(t)}\right]\geq0$. It follows that
$$\left[\ \LARGE{\substack{\mathbf v_1\\\\\mathbf v_2\\\\\dots\\\\\mathbf v_{|A|}}};G_{\{0\}'}^{(z_l)}\right]
=\left[\ \LARGE{\substack{\mathbf v_1\\\\\mathbf v_2\\\\\dots\\\\\mathbf v_{|A|}}};\int_{\alpha_i}^{z_l}g_{\{0\}'}^{(t)}dt\right]
=\int_{\alpha_i}^{z_l}\left[\ \LARGE{\substack{\mathbf v_1\\\\\mathbf v_2\\\\\dots\\\\\mathbf v_{|A|}}};g_{\{0\}'}^{(t)}\right]dt$$
is either non-negative (if $z_l\geq\alpha_i$) or non-positive (if $z_l<\alpha_i$). Consequently, $G_{\{0\}'}^{(z_l)}$ is box-$\mathbf n_A$-convex or box-$\mathbf n_A$-concave.
Similarly, $H_{\{0\}'}^{(z_l)}$ is box-$\mathbf n_A$-convex or box-$\mathbf n_A$-concave. Therefore the function $\widetilde F_A^{\mathbf z}=G_{\{0\}'}^{(z_l)}-H_{\{0\}'}^{(z_l)}$ is a linear combination of box-$\mathbf n_A$-convex functions. Taking into account $\widetilde{\mathbf n}_A=\mathbf n_A$ and the equality $F_A^{\mathbf z}=\widetilde F_A^{\mathbf z}+V_A^{\mathbf z}$, we see, that
$F_A^{\mathbf z}$ is a linear combination of box-$\widetilde{\mathbf n}_A$-convex functions.
\end{proof}

\section{Integral representation of box-monotone functions}

In this section, we study box-$(1,1,\dots,1)$-convex functions.
We will call them \emph{box-monotone} functions.
Our aim is to present the integral representation of box-monotone functions. The general idea is based on the observation that box-monotone functions behave similarly to cumulative distribution functions of probability distributions. However, while cumulative distribution functions are always bounded and right-continuous (or left-continuous), box-monotone functions, in general, do not have these properties.

We start from some results concerning real functions of one variable.

Let $I\subset\mathbb R$ be an open interval and $f\colon I\to\mathbb R$ be a~function with locally finite variation (which is equivalent to $f$ being a~difference of two non-decreasing functions). For $x\in I$ we denote one-side limits of $f$ at $x$ as $f(x-)=\lim_{u\uparrow x}f(u)$ and $f(x+)=\lim_{u\downarrow x}f(u)$.
We say that  $f$  is a~\emph{jump function} if for every $x,y\in I$, $x<y$, we have
\begin{equation}\label{eq:jump}
f(y-)-f(x+)=\sum_{x<t<y}(f(t+)-f(t-)).
\end{equation}
Note that our assumption that the function $f$ has locally finite variation implies that $f(t+)-f(t-)=0$ for all but countably many $t\in(x,y)$, and $\sum_{x<t<y}|f(t+)-f(t-)|<\infty$.

In the following lemma, we obtain a~decomposition of a~function with locally finite variation, which is a~counterpart of the well known Lebesgue decom\-po\-sition of right-continuous functions with finite variation.

\begin{Lem}\label{lem:decomp}
Let $I\subset\mathbb R$ be an open interval and let $\alpha\in I$ be fixed. For a~function $f\colon I\to\mathbb R$ with locally finite variation, there exist unique functions $f_L,f_R,f_c\colon I\to\mathbb R$ such that $f_L$ and $f_R$ are jump functions, $f_L(\alpha)=f_R(\alpha)=0$, $f_L$ is a~left-continuous function, $f_R$ is a~right-continuous function, $f_c$ is a~continuous function, and $f=f_L+f_R+f_c$.

Moreover, if the function $f$ is non-decreasing, then the functions $f_L$, $f_R$ and $f_c$ are also non-decreasing.
\end{Lem}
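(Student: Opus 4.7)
The plan is to construct $f_L$ and $f_R$ directly as sums over the (at most countable) set of discontinuities of $f$, set $f_c := f - f_L - f_R$, and then verify the required properties by elementary computation of one-sided limits. Since $f$ has locally finite variation, $\sum_{a<t<b}|f(t+)-f(t-)|<\infty$ on every bounded $[a,b]\subset I$, so the defining sums will converge absolutely. For $y\geq\alpha$ I would put
\[
f_R(y):=\sum_{\alpha<t\leq y}\bigl(f(t)-f(t-)\bigr),\qquad f_L(y):=\sum_{\alpha\leq t<y}\bigl(f(t+)-f(t)\bigr),
\]
and for $y<\alpha$ use the analogous sums over $y<t\leq\alpha$ and $y\leq t<\alpha$ respectively, with a minus sign. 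By construction $f_L(\alpha)=f_R(\alpha)=0$.

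A short computation based on absolute convergence yields, for every $t\in I$,
\[
f_R(t+)=f_R(t),\quad f_R(t)-f_R(t-)=f(t)-f(t-),\quad f_L(t-)=f_L(t),\quad f_L(t+)-f_L(t)=f(t+)-f(t),
\]
so $f_R$ is right-continuous and $f_L$ is left-continuous. The jump-function identity \eqref{eq:jump} for $f_R$ follows from $f_R(y-)-f_R(x+)=\sum_{x<t\leq y}(f(t)-f(t-))-(f(y)-f(y-))=\sum_{x<t<y}(f_R(t+)-f_R(t-))$; the analogous computation handles $f_L$. With $f_c:=f-f_L-f_R$, the one-sided jumps of $f_c$ at every $t$ cancel exactly, e.g.\ $f_c(t+)-f_c(t)=(f(t+)-f(t))-(f(t+)-f(t))-0=0$, so $f_c$ is continuous.

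For uniqueness, assume $f=f_L+f_R+f_c=f_L'+f_R'+f_c'$ and set $g:=f_L-f_L'$, $h:=f_R-f_R'$, $k:=f_c-f_c'$, so $g+h+k\equiv0$, $g(\alpha)=h(\alpha)=0$, $g$ is left-continuous, $h$ is right-continuous, and $k$ is continuous. Taking the right-jump of both sides at any $t$ reduces to $g(t+)-g(t)=0$, making $g$ also right-continuous, hence continuous; then \eqref{eq:jump} collapses to $g(y-)-g(x+)=0$ for all $x<y$, which with $g(\alpha)=0$ forces $g\equiv0$. Symmetrically $h\equiv0$, whence $k\equiv0$. For the monotone case, if $f$ is non-decreasing then every summand in $f_L$ and $f_R$ is non-negative, so both are non-decreasing; and
\[
(f_L+f_R)(y)-(f_L+f_R)(x)=(f(x+)-f(x))+\sum_{x<t<y}(f(t+)-f(t-))+(f(y)-f(y-)),
\]
combined with $f(y-)-f(x+)\geq\sum_{x<t<y}(f(t+)-f(t-))$ (which follows from monotonicity of $f$), yields $f_c(y)\geq f_c(x)$.

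The main obstacle is the careful bookkeeping of signs in the defining sums on the two sides of $\alpha$, together with the interchange of one-sided limits and the infinite jump sums. Both are handled routinely by the absolute convergence guaranteed by locally finite variation, but the notation does demand some attention.
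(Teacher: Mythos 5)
Your proposal is correct and follows essentially the same route as the paper: the same explicit jump-sum construction of $f_L$ and $f_R$ anchored at $\alpha$ (with the sign flip on the other side of $\alpha$), the same computation of one-sided limits, and the same uniqueness argument via the fact that a continuous jump function vanishing at $\alpha$ is identically zero. The only point stated more tersely than in the paper is the inequality $f(y-)-f(x+)\geq\sum_{x<t<y}(f(t+)-f(t-))$ for non-decreasing $f$, which the paper justifies by interleaving finitely many jump points with intermediate evaluation points; this is a standard fact and your appeal to monotonicity is acceptable.
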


\begin{proof}
First we show that whenever the functions $f_L$, $f_R$ and $f_c$ satisfying the conditions given in the lemma exist, then they are unique.
Assume that for some $f$ there are two such triplets: $f_L$, $f_R$, $f_c$, and $\widetilde f_L$, $\widetilde f_R$, $\widetilde f_c$. Then the functions $g_L=f_L-\widetilde f_L$, $g_R=f_R-\widetilde f_R$ and $g_c=f_c-\widetilde f_c$ form a triplet for the function $g=f-f=0$. It is enough to show that $g_L=g_R=g_c=0$.
Since $g_L=g-g_R-g_c=-g_R-g_c$, we obtain that $g_L$ is not only left-continuous, but also right-continuous, hence it is continuous. Let $x,y\in I$, $x<y$. Since $g_L$ is a~continuous jump function, we have
$$g_L(y)-g_L(x)=g_L(y-)-g_L(x+)=\sum_{x<t<y}(g_L(t+)-g_L(t-))=0.$$
Therefore $g_L$ is constant. Taking into account the condition $g_L(\alpha)=0$, we get $g_L=0$. Similarly, $g_R=0$. Consequently, $g_c=g-g_L-g_R=0$.

We will show that the following functions $f_L$, $f_R$ and $f_c$ satisfy the conditions given in the lemma.
\begin{align*}
f_L(x)&=\sum_{\alpha\leq t<x}(f(t+)-f(t))-\sum_{x\leq t<\alpha}(f(t+)-f(t)),\\
f_R(x)&=\sum_{\alpha<t\leq x}(f(t)-f(t-))-\sum_{x<t\leq\alpha}(f(t)-f(t-)),\\
f_c(x)&=f(x)-f_L(x)-f_R(x).
\end{align*}
The above functions are well defined because the~function $f$ has locally finite variation. Clearly, $f_L(\alpha)=f_R(\alpha)=0$ and $f=f_L+f_R+f_c$.

For $x,y\in I$, $x<y$ we have 
\begin{equation}\label{eq:L1}
f_L(y)-f_L(x)=\sum_{x\leq t<y}(f(t+)-f(t))\text{ and }
f_R(y)-f_R(x)=\sum_{x<t\leq y}(f(t)-f(t-)).
\end{equation}
It follows that for every $x\in I$ we have
$$f_L(x)-f_L(x-)=\lim_{u\uparrow x}(f_L(x)-f_L(u))=\lim_{u\uparrow x}\sum_{u\leq t<x}(f(t+)-f(t))=0,$$
$$f_L(x+)-f_L(x)=\lim_{u\downarrow x}(f_L(u)-f_L(x))=\lim_{u\downarrow x}\sum_{x\leq t<u}(f(t+)-f(t))=f(x+)-f(x).$$
Similarly, for every $x\in I$ we have $f_R(x+)-f_R(x)=0$ and $f_R(x)-f_R(x-)=f(x)-f(x-)$.
It follows that $f_L$ is left-continuous and $f_R$ is right-continuous. Moreover, for every $x\in I$ we obtain
$$f_c(x+)-f_c(x)=(f(x+)-f(x))-(f_L(x+)-f_L(x))-(f_R(x+)-f_R(x))=0,$$
$$f_c(x)-f_c(x-)=(f(x)-f(x-))-(f_L(x)-f_L(x-))-(f_R(x)-f_R(x-))=0,$$
hence $f_c$ is continuous.

Now we show that $f_L$ and $f_R$ are jump functions. Let $x,y\in I$, $x<y$. For every $t\in(x,y)$ we have
$$f_L(t+)-f_L(t-)=f_L(t+)-f_L(t)=f(t+)-f(t).$$
By \eqref{eq:L1} and left-continuity of $f_L$, we obtain
\begin{equation*}
\begin{split}
\sum_{x<t<y}(f_L(t+)-f_L(t-))=&\sum_{x<t<y}(f(t+)-f(t))=(f_L(y)-f_L(x))-(f(x+)-f(x))\\=&(f_L(y)-f_L(x))-(f_L(x+)-f_L(x))
=f_L(y-)-f_L(x+),
\end{split}
\end{equation*}
hence $f_L$ is a jump function.
Similarly, we obtain that $f_R$ is a jump function.

Now we assume that $f$ is non-decreasing. By \eqref{eq:L1}, we immediately obtain that the functions $f_L$ and $f_R$ are non-decreasing. We will prove that $f_c$ is also non-decreasing. Aiming at the contradiction, we assume that there exist $x,y\in I$ such that $x<y$ and $f_c(x)>f_c(y)$.
The inequality $f_c(x)>f_c(y)$ can be written as
$$(f_L(y)-f_L(x))+(f_R(y)-f_R(x))>f(y)-f(x).$$
By \eqref{eq:L1}, we get
$$\sum_{x<t<y}(f(t+)-f(t-))>f(y-)-f(x+).$$
Then there exist $x<t_1<t_2<\dots<t_n<y$ such that
$$\sum_{k=1}^n(f(t_k+)-f(t_k-))>f(y-)-f(x+),$$
and, consequently, there exist $x<u_0<t_1<u_1<t_2<\dots<t_n<u_n<y$ such that
$$\sum_{k=1}^n(f(u_k)-f(u_{k-1}))>\sum_{k=1}^n(f(t_k+)-f(t_k-))>f(u_n)-f(u_0).$$
We obtained a~contradiction, which implies that $f_c$ is non-decreasing.
\end{proof}

In the sequel we may continue to deal with the decomposition $f=f_L+f_R+f_c$ given by Lemma~\ref{lem:decomp}. However, for simplicity, we will use the decomposition of $f$ to two functions, given by the following corollary.

\begin{Cor}\label{cor:decomp}
Let $I\subset\mathbb R$ be an open interval and let $\alpha\in I$ be fixed. For a~function $f\colon I\to\mathbb R$ with locally finite variation, there exist unique functions $f_L,f_r\colon I\to\mathbb R$ such that $f_L$ is a left-continuous jump function and $f_r$ is a~right-continuous function such that $f_L(\alpha)=0$ and $f=f_L+f_r$.

Moreover, if the function $f$ is non-decreasing, then the functions $f_L$ and $f_r$ are also non-decreasing.
\end{Cor}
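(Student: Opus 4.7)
The plan is to deduce Corollary~\ref{cor:decomp} directly from Lemma~\ref{lem:decomp}, essentially by merging the right-continuous jump part $f_R$ and the continuous part $f_c$ into a single right-continuous function. First I would invoke Lemma~\ref{lem:decomp} to obtain the decomposition $f=f_L+f_R+f_c$ with $f_L,f_R$ jump functions vanishing at $\alpha$, $f_L$ left-continuous, $f_R$ right-continuous, and $f_c$ continuous. Then I would simply put $f_r=f_R+f_c$; since both summands are right-continuous, so is $f_r$, and together with $f_L$ this gives the required decomposition.

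For uniqueness, suppose $f=\widetilde f_L+\widetilde f_r$ is another decomposition satisfying the stated conditions. Set $g_L=f_L-\widetilde f_L$ and $g_r=f_r-\widetilde f_r$, so that $g_L=-g_r$. Since $g_L$ is left-continuous by construction and equal to $-g_r$ which is right-continuous, $g_L$ is continuous. Moreover, equation \eqref{eq:jump} is linear in $f$, so $g_L$, being a difference of two jump functions (both with locally finite variation, since each is a difference of two non-decreasing functions), is itself a jump function. For a continuous jump function one has $g_L(y)-g_L(x)=g_L(y-)-g_L(x+)=\sum_{x<t<y}(g_L(t+)-g_L(t-))=0$ for all $x<y$, so $g_L$ is constant; the condition $g_L(\alpha)=0$ then forces $g_L\equiv 0$, and hence $g_r\equiv 0$.

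Finally, for the monotonicity assertion, Lemma~\ref{lem:decomp} already guarantees that if $f$ is non-decreasing then all three of $f_L$, $f_R$, $f_c$ are non-decreasing; hence $f_r=f_R+f_c$ inherits monotonicity, while $f_L$ is non-decreasing by the lemma.

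I do not anticipate a serious obstacle; the only point that requires a brief justification is that a difference of jump functions (with locally finite variation) is again a jump function, which is immediate from the linearity of \eqref{eq:jump} and absolute convergence of the jump sums. Everything else is just bookkeeping on top of Lemma~\ref{lem:decomp}.
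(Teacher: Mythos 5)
Your proposal is correct and follows essentially the same route as the paper: the paper also obtains the decomposition by setting $f_r=f_R+f_c$ from Lemma~\ref{lem:decomp} and notes that uniqueness is proved as in that lemma, which is exactly the argument you spell out (difference of the two decompositions yields a continuous jump function vanishing at $\alpha$, hence zero). The monotonicity assertion likewise follows directly from the lemma in both treatments.
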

\begin{proof}
Let $f_L$, $f_R$ and $f_c$ be given by Lemma~\ref{lem:decomp}. We put $f_r=f_R+f_c$. The uniqueness can be proven similarly as in the proof of Lemma~\ref{lem:decomp}.
\end{proof}

\begin{Rem}\label{rem:Lr}
Note that the operations $\cdot_L$, $\cdot_R$, $\cdot_c$ and $\cdot_r$ are linear operators. To see it, one can use the explicit formulas presented in Corollary~\ref{cor:decomp} and the proof of~Lemma~\ref{lem:decomp}.
Moreover, we have $(f_L)_L=f_L$, $(f_R)_R=f_R$, $(f_c)_c=f_c$ and $(f_r)_r=f_r$. It follows that all the considered operators are linear projections in the space of functions with locally finite variation. In particular,
$(f_L)_r=f_L-(f_L)_L=0$. Similarly, $(f_r)_L=f_r-(f_r)_r=0$.
\end{Rem}

We are ready to investigate box-monotone functions of $d$ variables.
First we need to extend the decomposition presented in Corollary~\ref{cor:decomp} to functions of many variables.
If a~function $f\colon\mathbf I\to\mathbb R$ is $(1,1,\dots,1)$-regular, then for every $i\in\{1,2,\dots,d\}$ and $\mathbf y\in\mathbf I_{\{i\}'}$ the function $f_{\{i\}}^{\mathbf y}$ is the function of one variable with locally finite variation. It follows that the following definition is valid.

\begin{defin}\label{def:def6}
Let $f\colon\mathbf I\to\mathbb R$ be a~$(1,1,\dots,1)$-regular function. For $i=1,2,\dots,$ $d$ and $\alpha_i\in I_i$, we define the functions $f_{(i,L)},f_{(i,r)}\colon\mathbf I\to\mathbb R$ as follows: for every $\mathbf y\in\mathbf I_{\{i\}'}$ we put $(f_{(i,L)})_{\{i\}}^{\mathbf y}=(f_{\{i\}}^{\mathbf y})_L$ and $(f_{(i,r)})_{\{i\}}^{\mathbf y}=(f_{\{i\}}^{\mathbf y})_r$.
\end{defin}

\begin{Rem}\label{rem:Lr2}
Intuitively, in Definition~\ref{def:def6} we apply operation $\cdot_L$ or $\cdot_r$ to the $i$th variable of the function $f$.
Note that $f=f_{(i,L)}+f_{(i,r)}$ and for every $\mathbf y\in\mathbf I_{\{i\}'}$ we have that $(f_{(i,L)})_{\{i\}}^{\mathbf y}$ is a left-continuous jump function, $(f_{(i,r)})_{\{i\}}^{\mathbf y}$ is a~right-continuous function, and $(f_{(i,L)})_{\{i\}}^{\mathbf y}(\alpha_i)=0$. According to Corollary~\ref{cor:decomp}, $f_{(i,L)}$ and $f_{(i,r)}$ are the unique functions satisfying the above conditions.
Moreover, by Remark~\ref{rem:Lr}, we have that $\cdot_{(i,L)}$ and $\cdot_{(i,r)}$ are linear operators satisfying $(f_{(i,L)})_{(i,L)}=f_{(i,L)}$, $(f_{(i,r)})_{(i,r)}=f_{(i,r)}$ and $(f_{(i,L)})_{(i,r)}=(f_{(i,r)})_{(i,L)}=0$.
\end{Rem}

\begin{Lem}\label{lem:boxmon}
Let $f\colon\mathbf I\to\mathbb R$ be a~$(1,1,\dots,1)$-regular function. For every $i\in\{1,2,\dots,d\}$ and $\alpha_i\in I_i$ the functions $f_{(i,L)},f_{(i,r)}\colon\mathbf I\to\mathbb R$ are $(1,1,\dots,1)$-regular.

If, in addition, $f$ is box-monotone, then the functions $f_{(i,L)}$ and $f_{(i,r)}$ are also box-monotone.
\end{Lem}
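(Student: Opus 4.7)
The plan is to prove the two assertions in sequence, using in both the key fact that the one-variable operator $\cdot_L$, applied slice-by-slice to the $i$-th variable of $f$ to produce $f_{(i,L)}$, commutes with any linear combination in the remaining $d-1$ variables; this reduces multidimensional statements to the one-dimensional Corollary~\ref{cor:decomp}. For box-monotonicity of $f_{(i,L)}$ (assuming additionally that $f$ is box-monotone), I would take distinct $x_{k0},x_{k1}\in I_k$ for $k=1,\ldots,d$ and apply Remark~\ref{rem:rem1} with $A=\{i\}$ to write the $d$-dimensional box-divided difference of $f_{(i,L)}$ as $[x_{i0},x_{i1};\tilde g]$, where $\tilde g(t)$ is the $(d-1)$-dimensional box-divided difference of $(f_{(i,L)})_{\{i\}'}^{(t)}$ in the variables other than $i$. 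Using Proposition~\ref{prop:prop5}, the identity $(f_{(i,L)})_{\{i\}'}^{(t)}(\mathbf w)=(f_{\{i\}}^{\mathbf w})_L(t)$ from Definition~\ref{def:def6}, and linearity of $\cdot_L$ from Remark~\ref{rem:Lr}, one obtains $\tilde g = g_L$ where $g(t)$ is the corresponding box-divided difference of $f_{\{i\}'}^{(t)}$. Box-monotonicity of $f$ makes $g$ non-decreasing on $I_i$, and Corollary~\ref{cor:decomp} propagates this to $g_L$, so $[x_{i0},x_{i1};g_L]\geq 0$.

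For regularity, Lemma~\ref{lem:dif1} lets me write $f=\tilde g-\tilde h$ with $\tilde g,\tilde h$ both $(1,\ldots,1)$-regular and box-monotone; by linearity of $\cdot_{(i,L)}$ it suffices to treat one summand, so I may assume WLOG that $f$ itself is box-monotone. I would then induct on $d$ and show that every section $(f_{(i,L)})_A^{\mathbf z}$ is a linear combination of box-monotone functions on $\mathbf I_A$. The case $A=\{1,\ldots,d\}$ is immediate from the box-monotonicity paragraph above. For $i\in A$ with $|A|<d$, the section equals $(f_A^{\mathbf z})_{(l,L)}$, where $l$ is the position of $i$ in $A$; since $f_A^{\mathbf z}$ is $\mathbf n_A$-regular by Remark~\ref{rem:rem6}(b), applying Lemma~\ref{lem:dif1} to $f_A^{\mathbf z}$ on $\mathbf I_A$ followed by the box-monotonicity result at dimension $|A|$ finishes the case. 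For $i\in A'$ with $|A|+1<d$, the section coincides with the $z_l$-section of $\hat g_{(l^*,L)}$, where $\hat g=f_{A\cup\{i\}}^{\mathbf z_{A'\setminus\{i\}}}$ is $(1,\ldots,1)$-regular on $\mathbf I_{A\cup\{i\}}$ (Remark~\ref{rem:rem6}(b)) and $l^*$ is the position of $i$ in $A\cup\{i\}$, so the inductive hypothesis applies.

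The hard part will be the remaining case $A'=\{i\}$, where the induction on $d$ does not reduce dimension. I would handle it directly, mirroring the box-monotonicity argument: computing the $(d-1)$-dimensional box-divided difference of $(f_{(i,L)})_A^{(z)}$ in the $A$-variables and commuting $\cdot_L$ past it yields $\psi_L(z)$ divided by $\prod_{k\in A}(y_{k1}-y_{k0})$, where $\psi(t)$ is the alternating corner-sum $\sum_{\mathbf j\in\{0,1\}^{|A|}}\prod_{k\in A}(-1)^{1-j_k}f(\mathbf y_{\mathbf j},t)$. Box-monotonicity of $f$ makes $\psi$ non-decreasing on $I_i$, so Corollary~\ref{cor:decomp} gives $\psi_L(z)\geq 0$ for $z\geq\alpha_i$ and $\psi_L(z)\leq 0$ otherwise, making the section box-monotone or box-concave on $\mathbf I_A$; in either case it is a linear combination of box-monotone functions. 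Finally, the assertions for $f_{(i,r)}$ follow: regularity from $f_{(i,r)}=f-f_{(i,L)}$ together with the regularity just established for $f_{(i,L)}$, and box-monotonicity by the analogous version of the first paragraph's argument with $\cdot_r$ in place of $\cdot_L$.
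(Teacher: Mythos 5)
Your proposal is correct. The box-monotonicity part is exactly the paper's argument (reduce via Remark~\ref{rem:rem1} and Proposition~\ref{prop:prop5} to a one-variable function $\widetilde g$, commute $\cdot_L$ with the linear combination, invoke Corollary~\ref{cor:decomp}), and so is the case $i\in A$ of the regularity part (section, split by Lemma~\ref{lem:dif1}, apply the first part). Where you genuinely diverge is the case $i\notin A$: the paper treats all such $A$ uniformly by passing to $\hat g=f_{A\cup\{i\}}^{\widetilde{\mathbf z}}$, splitting it with Lemma~\ref{lem:dif2} into box-monotone $g-h$ vanishing on the slice $v_0=\alpha_i$, and reading off the sign of the divided difference of $(g_{(0,L)})_{\{0\}'}^{(z_l)}$ from the box-divided difference of $g_{(0,L)}$ over the point pair $\{z_l,\alpha_i\}$. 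You instead induct on $d$, which disposes of every $i\notin A$ with $|A|+1<d$ by the inductive hypothesis, and you handle the remaining codimension-one case $A'=\{i\}$ by a direct computation: the $(d-1)$-dimensional divided difference of $(f_{(i,L)})_A^{(z)}$ equals $\psi_L(z)$ (up to a positive factor) for the alternating corner sum $\psi$, and the normalization $\psi_L(\alpha_i)=0$ together with monotonicity of $\psi_L$ forces a sign depending only on $\operatorname{sign}(z-\alpha_i)$. Your global reduction to box-monotone $f$ via Lemma~\ref{lem:dif1} makes $\psi$ non-decreasing without any further splitting, so you avoid Lemma~\ref{lem:dif2} entirely; the price is carrying an induction on the number of variables, which the paper does not need. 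Both routes rest on the same two pillars (linearity of $\cdot_L$ from Remark~\ref{rem:Lr} and the monotonicity statement of Corollary~\ref{cor:decomp}), and each step you outline goes through.
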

\begin{proof}
First, we prove that if $f$ is $(1,1,\dots,1)$-regular and box-monotone, then the functions $f_{(i,L)}$ and $f_{(i,r)}$ are box-monotone. Let $g$ be any of the functions $f$, $f_{(i,L)}$ and $f_{(i,r)}$. The function $g$ is box-monotone, if for every vectors $(x_{10},x_{11})$ $\in I_1^2$, \dots, $(x_{d0},x_{d1})\in I_d^2$ with pairwise distinct coordinates, we have
\begin{equation*}
\left[\ \LARGE{\substack{x_{10},\ x_{11}\\\\x_{20},\ x_{21}\\\\\dots\\\\x_{d0},\ x_{d1}}};g\right]=[x_{i0},x_{i1};\widetilde g ]\geq0,
\end{equation*}
where the function $\widetilde g\colon I_i\to\mathbb R$ is given by
$$\widetilde g(x_i):=\left[\LARGE{\substack{x_{10},\ x_{11}\\\\\dots\\\\x_{i-1,0},\ x_{i-1,1}\\\\x_{i+1,0},\ x_{i+1,1}\\\\\dots\\\\x_{d0},\ x_{d1}}};g_{\{i\}'}^{(x_i)}\right].$$
The above inequality implies that the function $g$ is box-monotone if and only if 
$\widetilde g$ is non-decreasing (for every $(x_{10},x_{11})$, \dots, $(x_{i-1,0},x_{i-1,1})$, $(x_{i+1,0},x_{i+1,1})$, \dots, $(x_{d0},x_{d1})$).
By the expanded form of $\widetilde g$ given by Proposition~\ref{prop:prop5}, the function $\widetilde g$ is a linear combination of functions of the form $g_{\{i\}}^{\mathbf y}$ (with the coefficients and $\mathbf y$'s depending on $(x_{10},x_{11})$, \dots, $(x_{i-1,0},x_{i-1,1})$, $(x_{i+1,0},x_{i+1,1})$, \dots, $(x_{d0},x_{d1})$ only). Thus, by the linearity of $\cdot_L$, $\cdot_r$, $\cdot_{(i,L)}$ and $\cdot_{(i,r)}$, we obtain that $\widetilde{f_{(i,L)}}=(\widetilde f)_L$ and $\widetilde{f_{(i,r)}}=(\widetilde f)_r$.
Using Corollary~\ref{cor:decomp} and the fact that $\widetilde f$ is non-decreasing, we obtain that the functions $\widetilde{f_{(i,L)}}$ and $\widetilde{f_{(i,r)}}$ are non-decreasing. We end up with the conclusion that the functions $f_{(i,L)}$ and $f_{(i,r)}$ are box-monotone.

Now, we prove the $(1,1,\dots,1)$-regularity of $f_{(i,L)}$ and $f_{(i,r)}$.
Since $f_{(i,r)}=f-f_{(i,L)}$, it is enough to show the $(1,1,\dots,1)$-regularity of $f_{(i,L)}$.
Let $A\subset\{1,2,\dots,d\}$ and $\mathbf z\in\mathbf I_{A'}$ be fixed.
We need to show that $(f_{(i,L)})_A^{\mathbf z}$ is a linear combination of box-monotone functions.
Let $(a_1,a_2,\dots,a_{|A|})$ be the ordered sequence of the elements of $A$ (i.e., $A=\{a_1,a_2,\dots,a_{|A|}\}$ and $a_1<a_2<\dots<a_{|A|}$). Similarly, let $(a'_1,a'_2,\dots,a'_{|A'|})$ be the ordered sequence of the elements of $A'$.
We have two cases: $i\in A$ or $i\notin A$.

First we consider the case $i\in A$. Then there exists $l$ such that $i=a_l$. By Lemma~\ref{lem:dif1}, there exist two $(1,1,\dots,1)$-regular and box-monotone functions $g,h\colon\mathbf I_A\to\mathbb R$ such that $f_A^{\mathbf z}=g-h$.
By the first part of the proof, we obtain that $g_{(l,L)}$ and $h_{(l,L)}$ are box-monotone functions. Consequently, $(f_{(i,L)})_A^{\mathbf z}=g_{(l,L)}-h_{(l,L)}$ is a linear combination of box-monotone functions.

Now we pass to the case $i\notin A$. Since $i\in A'$, there exists $l$ such that $i=a'_l$. We see that both $f_A^{\mathbf z}$ and $(f_{(i,L)})_A^{\mathbf z}$ do not depend on $x_i$ (the $i$th variable of $f$ and $f_{(i,L)}$), which is fixed at $z_l$. If $z_l=\alpha_i$, then $(f_{(i,L)})_A^{\mathbf z}=0$ is a~box-monotone function. In the sequel, we assume that $z_l\neq\alpha_i$.

We consider the function $f_{A\cup\{i\}}^{\ \widetilde{\mathbf z}}$, where $\widetilde{\mathbf z}=(z_1,\dots,z_{l-1},z_{l+1},\dots,z_{|A'|})$.
Without loss of generality (changing the order of variables, if necessary), we have
$f_{A\cup\{i\}}^{\ \widetilde{\mathbf z}}\colon I_i\times\prod_{k\in A}I_k\to\mathbb R$. Using Lemma~\ref{lem:dif2}, we obtain two $(1,1,\dots,1)$-regular and box-monotone functions $g,h\colon I_i\times\prod_{k\in A}I_k\to\mathbb R$ such that $f_{A\cup\{i\}}^{\ \widetilde{\mathbf z}}=g-h$ and we have $g(v_0,v_1,\dots,v_{|A|})=h(v_0,v_1,\dots,v_{|A|})=0$ whenever $v_0=\alpha_i$ (we number the variables of $g$ and $h$ starting from 0).

For $k=1,2,\dots,|A|$, let $v_{k0},v_{k1}\in I_{a_k}$ be pairwise distinct.
By the first part of the proof, we obtain that $g_{(0,L)}$ is the box-monotone function. Consequently,
\begin{equation*}
\begin{split}
0\leq&\left[\ \LARGE{\substack{z_l,\ \alpha_i\\\\v_{10},\ v_{11}\\\\\dots\\\\v_{|A|0},\ v_{|A|1}}};g_{(0,L)}\right]
=\frac{
\left[\ \LARGE{\substack{v_{10},\ v_{11}\\\\\dots\\\\v_{|A|0},\ v_{|A|1}}};(g_{(0,L)})_{\{0\}'}^{(z_l)}\right]
-\left[\ \LARGE{\substack{v_{10},\ v_{11}\\\\\dots\\\\v_{|A|0},\ v_{|A|1}}};(g_{(0,L)})_{\{0\}'}^{(\alpha_i)}\right]}{z_l-\alpha_i}\\\\
=&\frac1{z_l-\alpha_i}\cdot\left[\ \LARGE{\substack{v_{10},\ v_{11}\\\\\dots\\\\v_{|A|0},\ v_{|A|1}}};(g_{(0,L)})_{\{0\}'}^{(z_l)}\right].
\end{split}
\end{equation*}
It follows that the sign of the last divided difference depends only on the sign of $z_l-\alpha_i$. Consequently, $(g_{(0,L)})_{\{0\}'}^{(z_l)}$ is either box-$(1,1,\dots,1)$-convex or box-$(1,1,\dots,1)$-concave. Similarly, we can obtain that $(h_{(0,L)})_{\{0\}'}^{(z_l)}$ is either box-$(1,1,\dots,1)$-convex or box-$(1,1,\dots,1)$-concave.
We end up with the conclusion that $(f_{(i,L)})_A^{\mathbf z}=(g_{(0,L)})_{\{0\}'}^{(z_l)}-(h_{(0,L)})_{\{0\}'}^{(z_l)}$ is a linear combination of box-$(1,1,\dots,1)$-convex functions.
The proof is finished.
\end{proof}

\begin{Lem}\label{lem:16}
Let $f\colon\mathbf I\to\mathbb R$ be a~$(1,1,\dots,1)$-regular function and let $\boldsymbol\alpha=(\alpha_1,\alpha_2,\dots,\alpha_d)\in\mathbf I$ be fixed. If $i,j\in\{1,2,\dots,d\}$ and $a,b\in\{L,r\}$, then $f_{(i,a)(j,b)}=f_{(j,b)(i,a)}$.
\end{Lem}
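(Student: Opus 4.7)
The plan is to split on whether $i=j$, and reduce the harder case $i\neq j$ to a two-variable uniqueness argument.

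If $i=j$, the identity follows at once from Remark~\ref{rem:Lr2}: the operators $\cdot_{(i,L)}$ and $\cdot_{(i,r)}$ are linear projections with $(\cdot_{(i,L)})(\cdot_{(i,r)})=(\cdot_{(i,r)})(\cdot_{(i,L)})=0$ and idempotent squares, so both sides of $f_{(i,a)(i,b)}=f_{(i,b)(i,a)}$ equal $f_{(i,a)}$ if $a=b$ and $0$ otherwise.

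If $i\neq j$, by renaming variables I take $i=1$, $j=2$. For any $\mathbf z\in\mathbf I_{\{1,2\}'}$ the slice $F:=f_{\{1,2\}}^{\mathbf z}$ is $(1,1)$-regular by Remark~\ref{rem:rem6}(b), and Definition~\ref{def:def6} together with the fact that each operator $\cdot_{(i,a)}$ is defined slicewise implies $(f_{(1,a)(2,b)})_{\{1,2\}}^{\mathbf z}=F_{(1,a)(2,b)}$ (and analogously for the opposite order). Hence it suffices to establish $F_{(1,a)(2,b)}=F_{(2,b)(1,a)}$ for every $(1,1)$-regular $F\colon I_1\times I_2\to\mathbb R$.

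For the two-variable case I would prove the uniqueness of decompositions: any writing $F=\sum_{a,b\in\{L,r\}}F^{ab}$ in which, for each fixed $v\in I_2$, $u\mapsto F^{ab}(u,v)$ is left-continuous jump with value $0$ at $\alpha_1$ if $a=L$ and right-continuous if $a=r$, and symmetrically for the second variable when $u$ is fixed, must satisfy $F^{ab}=F_{(1,a)(2,b)}$. Indeed, applying $\cdot_{(1,c)}$ to the decomposition uses Remark~\ref{rem:Lr} (idempotence of $\cdot_a$ together with $(f_L)_r=(f_r)_L=0$) to kill every $F^{ab}$ with $a\neq c$; a subsequent $\cdot_{(2,d)}$ then kills every remaining $F^{cb}$ with $b\neq d$, isolating $F^{cd}$. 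Since the marginal conditions are symmetric in the two variables, $F_{(2,b)(1,a)}$ satisfies the same system of properties as $F_{(1,a)(2,b)}$, so uniqueness gives the required equality.

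The hard part is verifying that $F_{(1,a)(2,b)}$ does possess the ``cross'' marginal property: for each fixed $v\in I_2$, $u\mapsto F_{(1,a)(2,b)}(u,v)$ is of type $a$ in variable~$1$. The direct marginal (type $b$ in the second variable for fixed $u$) is immediate from Definition~\ref{def:def6}. The cross property I would prove by writing $F_{(1,a)(2,b)}(u,v)$, via the explicit formulas from the proof of Lemma~\ref{lem:decomp}, as a locally finite sum over $s$ of linear combinations of $F_{(1,a)}(u,s)$ and $F_{(1,a)}(u,s+)$, and then showing $F_{(1,a)}(u,s+)=[F(\cdot,s+)]_a(u)$, i.e.\ that $\cdot_a$ in variable~$1$ commutes with taking a one-sided limit in variable~$2$. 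By Lemma~\ref{lem:dif1} this reduces to box-monotone $F$, where the supermodular structure forces the iterated one-sided limits $\lim_{w\downarrow s}\lim_{r\downarrow t}F(r,w)$ and $\lim_{r\downarrow t}\lim_{w\downarrow s}F(r,w)$ both to equal $F(t+,s+)$, and the jump sums converge absolutely on compact rectangles, so limit-sum interchanges are legitimate.
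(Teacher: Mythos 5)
Your overall architecture is sound and genuinely different from the paper's. The reduction to two variables, the treatment of $i=j$ via Remark~\ref{rem:Lr2}, and the uniqueness argument for the four-fold decomposition (apply $\cdot_{(1,c)}$ then $\cdot_{(2,d)}$, using Corollary~\ref{cor:decomp} and Remark~\ref{rem:Lr} to isolate $F^{cd}$) are all correct. You have also correctly located the crux: everything hinges on the \emph{cross} marginal property, i.e.\ that applying $\cdot_{(2,b)}$ preserves the type-$a$ behaviour of the first-variable slices. But that step is where your argument stops being a proof and becomes a plan, and the plan as stated has a real soft spot. You invoke ``the jump sums converge absolutely on compact rectangles, so limit-sum interchanges are legitimate,'' but absolute (even uniform) convergence is \emph{not} enough to conclude that $u\mapsto F_{(1,L)(2,b)}(u,v)$ is again a jump function: a uniform limit of jump functions can be continuous (e.g.\ $f_n(x)=\tfrac1n\#\{k:k/n\le x\}\to x$). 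To preserve the jump-function property of an infinite sum you need summability of the total variations (equivalently, of the total jump masses) of the summands $[F(\cdot,s+)-F(\cdot,s)]_a$ on compact $u$-intervals; this is in fact true here (after normalizing, $u\mapsto F(u,s+)-F(u,s)$ is monotone for box-monotone $F$, so the variations telescope against the second-variable variation of two slices), but it has to be proved, not asserted. Two further points are used silently: that $u\mapsto F(u,s+)$ has locally finite variation at all (needed before $\cdot_a$ can be applied to it), and the interchange of the iterated one-sided limits, which requires first subtracting the single-variable terms $F(r,w_0)+F(r_0,w)-F(r_0,w_0)$ so that the remainder is monotone in each variable on a quadrant. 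None of these is false, but together they constitute the actual content of the lemma, and they are missing.

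For comparison, the paper avoids the cross-marginal property entirely. It writes the slice as $g-h$ with $g,h$ box-monotone (Lemma~\ref{lem:dif1}), observes via Lemma~\ref{lem:boxmon} that $g_{(1,L)(2,L)(1,r)}=-g_{(1,L)(2,r)(1,r)}$ is simultaneously box-monotone and box-anti-monotone, hence box-$(1,1)$-affine, deduces that the commutator $g_{(2,L)(1,L)}-g_{(1,L)(2,L)}$ is box-$(1,1)$-affine, and kills it with Proposition~\ref{prop:prop6} because both terms vanish on the lines $v_1=\alpha_i$ and $v_2=\alpha_j$; the remaining three cases $(a,b)\ne(L,L)$ then follow by pure algebra from $f_{(i,r)}=f-f_{(i,L)}$. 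That route trades your delicate limit-and-sum analysis for one application of the rigidity of box-affine functions, which is why the paper never needs to know that $\cdot_{(2,b)}$ preserves one-sided continuity in the first variable. If you want to keep your route, the cross-marginal property must be established in full; if you want a shorter proof, the affine-commutator trick is the way to go.
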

By $f_{(i,a)(j,b)}$ we mean $(f_{(i,a)})_{(j,b)}$. The same convention applies in other cases.

\begin{proof}
If $i=j$, then the lemma follows from Remark~\ref{rem:Lr2}.

First we show that the conclusion of the lemma holds for $a=b=L$. It is enough to prove that for every $\mathbf y\in\mathbf I_{\{i,j\}'}$ and for $\widetilde f=f_{\{i,j\}}^{\mathbf y}$ we have $\widetilde f_{(1,L)(2,L)}=\widetilde f_{(2,L)(1,L)}$. By Lemma~\ref{lem:dif1}, there exist two $(1,1)$-regular and box-monotone functions $g,h\colon I_i\times I_j\to\mathbb R$ such that $\widetilde f=g-h$.

Since $g$ is box-monotone, by Lemma~\ref{lem:boxmon}, we obtain that both functions
$g_{(1,L)(2,L)(1,r)}$ and $g_{(1,L)(2,r)(1,r)}$
are box-monotone. Therefore
$$g_{(1,L)(2,L)(1,r)}=(g_{(1,L)(2,L)}+g_{(1,L)(2,r)})_{(1,r)}-g_{(1,L)(2,r)(1,r)}
=g_{(1,L)(1,r)}-g_{(1,L)(2,r)(1,r)}=-g_{(1,L)(2,r)(1,r)}$$
is a box-$(1,1)$-affine function. Similarly (exchanging all $\cdot_{(1,L)}$ and $\cdot_{(1,r)}$), we obtain that $g_{(1,r)(2,L)(1,L)}$ is box-$(1,1)$-affine.
It follows that
\begin{equation*}
\begin{split}
g_{(2,L)(1,L)}-g_{(1,L)(2,L)}
=&(g_{(1,L)}+g_{(1,r)})_{(2,L)(1,L)}-(g_{(1,L)(2,L)(1,L)}+g_{(1,L)(2,L)(1,r)})\\
=&g_{(1,r)(2,L)(1,L)}-g_{(1,L)(2,L)(1,r)}
\end{split}
\end{equation*}
is also box-$(1,1)$-affine. On the other hand, we have $g_{(2,L)(1,L)}(v_1,v_2)=0=g_{(1,L)(2,L)}(v_1,v_2)$, whenever $v_1=\alpha_i$ or $v_2=\alpha_j$.
By Proposition~\ref{prop:prop6}, we get $g_{(2,L)(1,L)}=g_{(1,L)(2,L)}$.
Similarly, we obtain $h_{(2,L)(1,L)}=h_{(1,L)(2,L)}$. Con\-se\-quent\-ly, $\widetilde f_{(2,L)(1,L)}=\widetilde f_{(1,L)(2,L)}$ and
$f_{(i,L)(j,L)}=f_{(j,L)(i,L)}$.

We proved one of the four required identities. We use it to prove the other three:
$$f_{(i,L)(j,r)}=f_{(i,L)}-f_{(i,L)(j,L)}=f_{(i,L)}-f_{(j,L)(i,L)}=f_{(j,r)(i,L)},$$
$$f_{(i,r)(j,L)}=f_{(j,L)}-f_{(i,L)(j,L)}=f_{(j,L)}-f_{(j,L)(i,L)}=f_{(j,L)(i,r)},$$
$$f_{(i,r)(j,r)}=f_{(j,r)}-f_{(i,L)(j,r)}=f_{(j,r)}-f_{(j,r)(i,L)}=f_{(j,r)(i,r)}.$$
This completes the proof of the lemma.
\end{proof}

In the following theorem we study a $d$-dimensional counterpart of the decomposition introduced in Definition~\ref{def:def6}.

\begin{Thm}\label{th:rozklad}
Let $f\colon\mathbf I\to\mathbb R$ be a~$(1,1,\dots,1)$-regular function and let $\boldsymbol\alpha=(\alpha_1,\alpha_2,\dots,\alpha_d)\in\mathbf I$ be fixed. There exist unique $(1,1,\dots,1)$-regular functions $f_\mathbf b$, $\mathbf b=(b_1,b_2,\dots,b_d)\in\{L,r\}^d$, such that
$$f=\sum_{\mathbf b\in\{L,r\}^d}f_{\mathbf b},$$
and for every $\mathbf b\in\{L,r\}^d$, $i=1,2,\dots,d$ and $\mathbf y\in\mathbf I_{\{i\}'}$, we have:
\begin{itemize}
\item[(a)] if $b_i=L$, then $(f_\mathbf b)_{\{i\}}^{\mathbf y}$ is a~left-continuous jump function, $(f_\mathbf b)_{\{i\}}^{\mathbf y}(\alpha_i)=0$,
\item[(b)] if $b_i=r$, then $(f_\mathbf b)_{\{i\}}^{\mathbf y}$ is a~right-continuous function.
\end{itemize}

If, in addition, $f$ is box-monotone, then the functions $f_{\mathbf b}$ are also box-monotone.
\end{Thm}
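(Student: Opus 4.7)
The plan is to define $f_{\mathbf b}$ by iterating the one-dimensional operators from Definition~\ref{def:def6}. Specifically, I would set
$$f_{\mathbf b} = f_{(1,b_1)(2,b_2)\cdots(d,b_d)}$$
(where the operators are applied in the written order). By Lemma~\ref{lem:16} any two of these operators commute, so this definition is unambiguous and the order of composition may be chosen freely. Each $f_{\mathbf b}$ is $(1,1,\dots,1)$-regular because $f$ is, and by Lemma~\ref{lem:boxmon} each operator $\cdot_{(i,L)}$ and $\cdot_{(i,r)}$ preserves $(1,1,\dots,1)$-regularity, so regularity is preserved through the $d$-fold composition. The same lemma also yields box-monotonicity of every $f_{\mathbf b}$ whenever $f$ itself is box-monotone, proving the last sentence of the theorem.

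Next I would establish the decomposition $f=\sum_{\mathbf b}f_{\mathbf b}$ by induction on $d$ using the basic identity $g = g_{(i,L)}+g_{(i,r)}$ from Remark~\ref{rem:Lr2}. Applying this identity at coordinate~$1$ gives $f = f_{(1,L)}+f_{(1,r)}$; applying it next at coordinate~$2$ to each summand gives a sum of four terms $f_{(1,b_1)(2,b_2)}$, and continuing through all $d$ coordinates produces the required $2^d$-fold decomposition. To verify properties (a) and (b), fix $\mathbf b$ and $i$, and use commutativity (Lemma~\ref{lem:16}) to rewrite
$$f_{\mathbf b} = \bigl(f_{(1,b_1)\cdots(i-1,b_{i-1})(i+1,b_{i+1})\cdots(d,b_d)}\bigr)_{(i,b_i)}.$$
By the very definition of $\cdot_{(i,L)}$ and $\cdot_{(i,r)}$ (Definition~\ref{def:def6} together with Corollary~\ref{cor:decomp}), fixing the remaining variables at any $\mathbf y\in\mathbf I_{\{i\}'}$ and reading off the $i$th slice gives exactly a left-continuous jump function vanishing at $\alpha_i$ (if $b_i=L$) or a right-continuous function (if $b_i=r$), as required.

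For uniqueness, suppose $\{\widetilde f_{\mathbf b}\}_{\mathbf b}$ is another such family. Put $g_{\mathbf b}=f_{\mathbf b}-\widetilde f_{\mathbf b}$; then $\sum_{\mathbf b}g_{\mathbf b}=0$ and each $g_{\mathbf b}$ still satisfies (a) and (b). Fix $\mathbf c\in\{L,r\}^d$ and apply the composition $\cdot_{(1,c_1)(2,c_2)\cdots(d,c_d)}$ to this identity. By Remark~\ref{rem:Lr2} we have $(g_{\mathbf b})_{(i,L)}=g_{\mathbf b}$ if $b_i=L$ (since the $i$th slice is already a left-continuous jump function vanishing at $\alpha_i$, so it equals its own $L$-part by the uniqueness in Corollary~\ref{cor:decomp}) and $(g_{\mathbf b})_{(i,L)}=0$ if $b_i=r$; analogous statements hold with $L$ replaced by $r$. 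Consequently the composite operator annihilates every $g_{\mathbf b}$ with $\mathbf b\neq\mathbf c$ and leaves $g_{\mathbf c}$ unchanged, which forces $g_{\mathbf c}=0$.

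The main obstacle is ensuring that the iterated operators $\cdot_{(i,b_i)}$ interact correctly, i.e.\ that applying $\cdot_{(j,b_j)}$ does not destroy the one-dimensional property already achieved at coordinate~$i$. This is precisely what the commutation identity of Lemma~\ref{lem:16} provides: since $f_{\mathbf b}$ can be written with the $\cdot_{(i,b_i)}$ factor applied last for any chosen $i$, the desired slice-wise behavior in coordinate $i$ is immediate. Everything else reduces to careful bookkeeping of the linear decomposition $g=g_{(i,L)}+g_{(i,r)}$ and the idempotence/annihilation relations from Remark~\ref{rem:Lr2}.
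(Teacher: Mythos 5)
Your proposal is correct and follows essentially the same route as the paper: define $f_{\mathbf b}$ as the iterated composition $f_{(1,b_1)\cdots(d,b_d)}$, invoke Lemma~\ref{lem:boxmon} for regularity and box-monotonicity, Remark~\ref{rem:Lr2} for the decomposition $f=\sum_{\mathbf b}f_{\mathbf b}$, and Lemma~\ref{lem:16} to place $\cdot_{(i,b_i)}$ last when verifying (a) and (b). The only cosmetic difference is in the uniqueness step, where the paper runs an induction on prefixes of $\mathbf b$ while you apply the composite projection $\cdot_{(1,c_1)\cdots(d,c_d)}$ to annihilate every $g_{\mathbf b}$ with $\mathbf b\neq\mathbf c$; both arguments rest on the same uniqueness of the one-dimensional $L$/$r$ decomposition from Corollary~\ref{cor:decomp}.
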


\begin{proof}
Let $\mathbf b=(b_1,b_2,\dots,b_d)\in\{L,r\}^d$. We consider the functions $f_{(b_1,b_2,\dots,b_k)} $, $ k=1, \ldots, d$, defined inductively as follows: $f_{(b_1)}= f_{(1,b_1)}$ and $f_{(b_1,b_2,\dots,b_k)}= (f_{(b_1,b_2,\dots,b_{k-1})})_{(k,b_k)}$, $ k=2, \ldots, d$. We will check that $f_\mathbf b=f_{(b_1,b_2,\dots,b_d)}$ satisfies the required conditions.

By Lemma~\ref{lem:boxmon}, the functions $f_{\mathbf b}$ are $(1,1,\dots,1)$-regular. If, in addition, $f$ is box-monotone, then the functions $f_{\mathbf b}$ are also box-monotone. By Remark~\ref{rem:Lr2}, we have
$$f=\sum_{b_1\in\{L,r\}}f_{(b_1)}=\sum_{b_1,b_2\in\{L,r\}}f_{(b_1,b_2)}=\dots=\sum_{\mathbf b\in\{L,r\}^d}f_{\mathbf b}.$$

By Lemma~\ref{lem:16}, and the last part of Remark~\ref{rem:Lr2}, we also have $f_\mathbf b=(f_\mathbf b)_{(i,b_i)}$. Consequently, conditions (a) and (b) of the theorem are satisfied.

It remains to prove the uniqueness of the functions $f_\mathbf b$. We will show by induction on $k=0,1,\dots,d$, that for every
for $a_1,a_2,\dots,a_k\in\{L,r\}$ the function 
\begin{equation}\label{eq:okropne}
\sum_{\substack{\mathbf b\in\{L,r\}^d\\(b_1,\dots,b_k)=(a_1,\dots,a_k)}}f_{\mathbf b}
\end{equation}
is uniquely determined.
For $k=0$ the expression \eqref{eq:okropne} is equal to $f$. In the induction step (for $k=0,1,\dots,d-1$) we observe that
$$\sum_{\substack{\mathbf b\in\{L,r\}^d\\(b_1,\dots,b_k)=(a_1,\dots,a_k)}}f_{\mathbf b}=
\sum_{\substack{\mathbf b\in\{L,r\}^d\\(b_1,\dots,b_k)=(a_1,\dots,a_k)\\b_{k+1}=L}}f_{\mathbf b}+\sum_{\substack{\mathbf b\in\{L,r\}^d\\(b_1,\dots,b_k)=(a_1,\dots,a_k)\\b_{k+1}=r}}f_{\mathbf b}$$
is the decomposition of expression \eqref{eq:okropne} to its $\cdot_{(k+1,L)}$ and $\cdot_{(k+1,r)}$ part.
By Remark~\ref{rem:Lr2}, this de\-com\-po\-si\-tion is unique. Consequently, for $k=d$ we obtain the uniqueness of the functions $f_{\mathbf b}$.
\end{proof}


\begin{defin}
For $x,y\in\mathbb R$ and $b\in\{L,r\}$, we define the function $\chi_{x,y}^b\colon\mathbb R\to\mathbb R$ as follows:
$$\chi_{x,y}^L(u)=\mathbf1_{(-\infty,y)}(u)-\mathbf1_{(-\infty,x)}(u)=\begin{cases}1&\text{if }x\leq u<y,\\-1&\text{if }y\leq u<x,\\0&\text{otherwise,}\end{cases}$$
$$\chi_{x,y}^r(u)=\mathbf1_{(-\infty,y]}(u)-\mathbf1_{(-\infty,x]}(u)=\begin{cases}1&\text{if }x<u\leq y,\\-1&\text{if }y<u\leq x,\\0&\text{otherwise.}\end{cases}$$
\end{defin}

\begin{Thm}\label{th:boxmon1}
Let $f\colon\mathbf I\to\mathbb R$ be a~function. Let $\mathbf b=(b_1,b_2,\dots,b_d)\in\{L,r\}^d$ and $\boldsymbol\alpha=(\alpha_1,\alpha_2,\dots,\alpha_d)\in\mathbf I$ be fixed.
Then the following conditions are equivalent:

\begin{itemize}
\item[$\bullet$] The function $f$ is a~$(1,1,\dots,1)$-regular and box-monotone such that for $i=1,2,\dots,d$ and $\mathbf y\in\mathbf I_{\{i\}'}$, we have:
\begin{itemize}
\item[(a)] if $b_i=L$, then $f_{\{i\}}^{\mathbf y}$ is a~left-continuous jump function,
\item[(b)] if $b_i=r$, then $f_{\{i\}}^{\mathbf y}$ is a~right-continuous function,
\item[(c)]  $f_{\{i\}}^{\mathbf y}(\alpha_i)=0$.
\end{itemize}
\item[$\bullet$] There exists a~Borel measure $\mu$ on $\mathbf I$ such that
\begin{itemize}
\item[(i)] $$f(x_1,\dots,x_d)=\idotsint\limits_{\mathbf I}\prod_{j=1}^d\chi_{\alpha_j,x_j}^{b_j}(u_j)\ d\mu(u_1,\dots,u_d),$$
\item[(ii)] $\mu(K)<\infty$ for every compact set $K\subset\mathbf I$,
\item[(iii)] for each $i=1,2,\dots,d$, if $b_i=L$, then the marginal measure of $\mu$, corresponding to the $i$th coordinate is a discrete measure.
\end{itemize}
\end{itemize}
\noindent Moreover, the measure $\mu$ that appears in representation (i) is unique.
\end{Thm}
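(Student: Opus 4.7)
My plan is to prove the two implications separately. The direction $(\Leftarrow)$ from integral representation to function properties is a routine verification, while $(\Rightarrow)$ requires a multivariate Lebesgue--Stieltjes-type construction that crucially uses the one-sided continuity hypotheses. Uniqueness of $\mu$ will be a byproduct of the construction together with the fact that the rectangles we use generate the Borel $\sigma$-algebra.

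For $(\Leftarrow)$, I would fix $\mathbf u$ and observe that the integrand $\prod_{j=1}^d \chi_{\alpha_j, x_j}^{b_j}(u_j)$ factors as a product of $d$ functions, each depending on a single variable $x_j$. Each factor is non-decreasing in $x_j$, bounded by $1$, vanishes at $x_j=\alpha_j$, and is left-continuous if $b_j=L$ (right-continuous if $b_j=r$). By iterated application of Remark~\ref{rem:rem2}, the multiple divided difference of the integrand factors into a product of $d$ one-dimensional divided differences, each non-negative, so the integrand is box-monotone in $\mathbf x$ for every $\mathbf u$; since a divided difference is a finite linear combination of function values, Fubini (justified by (ii)) allows us to exchange it with $\int\cdots d\mu$, yielding box-monotonicity of $f$. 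For $(1,1,\dots,1)$-regularity, given $A\subset\{1,\dots,d\}$ and $\mathbf z\in\mathbf I_{A'}$, I would partition $\mathbf I_{A'}$ into the orthants cut out by the hyperplanes $u_k=\alpha_k$; on each orthant the factor $\prod_{k\in A'}\chi_{\alpha_k,z_k}^{b_k}(u_k)$ has constant sign, so $f_A^{\mathbf z}$ becomes a signed combination of integrals against restricted positive measures, each of which is box-$\mathbf n_A$-monotone by the same argument. Conditions (a)/(b) propagate from the corresponding pointwise one-sided continuity of $\chi$ by dominated convergence; condition (c) is immediate since $\chi_{\alpha_j,\alpha_j}^{b_j}\equiv 0$; and the jump-function part of (a) requires (iii), turning the integral over the $i$th coordinate into a countable sum of jumps at the atoms of the $i$th marginal.

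For $(\Rightarrow)$, I would construct $\mu$ via Carathéodory extension. Form the semi-ring $\mathcal R$ of rectangles $R(\mathbf a,\mathbf b')=\prod_{j=1}^d J_j$, where $J_j=[a_j,b_j')$ if $b_j=L$ and $J_j=(a_j,b_j']$ if $b_j=r$ (with $a_j<b_j'$), and set
$$\tilde\mu\bigl(R(\mathbf a,\mathbf b')\bigr)=\sum_{\epsilon\in\{0,1\}^d}(-1)^{d-\epsilon_1-\cdots-\epsilon_d}\,f\bigl(c_1^{(\epsilon_1)},\dots,c_d^{(\epsilon_d)}\bigr),\qquad c_j^{(0)}=a_j,\ c_j^{(1)}=b_j'.$$
By Proposition~\ref{prop:prop5}, this equals $\prod_j(b_j'-a_j)$ times the multiple divided difference of $f$ at the corners, hence is non-negative by box-monotonicity; finite additivity on $\mathcal R$ is a telescoping identity. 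I would then prove $\sigma$-additivity by exploiting the one-sided continuity: for monotone $R_n\uparrow R$ (or $\downarrow$), the relevant corner values of $f$ approach their limits along sequences that respect the orientation of each $J_j$, so (a)/(b) give $\tilde\mu(R_n)\to\tilde\mu(R)$. Carathéodory's theorem then produces the Borel measure $\mu$. The representation (i) is verified by computing $\int\prod_j\chi_{\alpha_j,x_j}^{b_j}(u_j)\,d\mu(\mathbf u)$: this integral equals $\prod_j\operatorname{sgn}(x_j-\alpha_j)$ times $\tilde\mu$ of the rectangle with opposite corners $\boldsymbol\alpha$ and $\mathbf x$, and the defining telescoping formula together with (c) collapses this to $f(\mathbf x)$. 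Property (ii) is immediate since $\tilde\mu$ of any compact rectangle is a finite combination of values of $f$. For (iii), when $b_i=L$, I would integrate out the remaining variables to get (via the regularity of $f$) a signed combination of jump functions in $x_i$, whose associated 1D measures are discrete, forcing the $i$th marginal of $\mu$ to be discrete. Uniqueness follows because any representing measure must agree with $\tilde\mu$ on $\mathcal R$.

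The main obstacle is the $\sigma$-additivity of $\tilde\mu$ on $\mathcal R$: because the rectangles carry mixed orientations prescribed by $\mathbf b$ (half-open on the left in coordinates with $b_j=L$, on the right with $b_j=r$), the continuity-from-above argument on compact rectangles must be carried out coordinate by coordinate, applying the correct one-sided continuity of $f$ in each. A secondary subtlety is deducing (iii) from the slicewise jump-function hypothesis, which requires a disintegration argument that is justified precisely by the $(1,1,\dots,1)$-regularity of $f$ and the discrete 1D Lebesgue--Stieltjes construction applied fiber by fiber.
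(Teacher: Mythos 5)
Your proposal is correct and follows essentially the same route as the paper: both directions hinge on the identity $\mu\bigl(\prod_j J^{b_j}_{y_j,z_j}\bigr)=\sum_{B}(-1)^{|B|}f(\hat y_{B,1},\dots,\hat y_{B,d})=\prod_j(z_j-y_j)\cdot[\dots;f]$, used once to read off box-monotonicity from the measure and once to define the premeasure on mixed-orientation rectangles, with uniqueness coming from the fact that any representing measure must satisfy this identity. Two points of divergence are worth noting. First, for the $(1,\dots,1)$-regularity of $f$ in the direction ($\Leftarrow$), the paper does not argue directly from the integral at all: it observes that $f$ is box-monotone and vanishes on the hyperplanes $x_j=\alpha_j$, and then invokes Theorem~\ref{thm:prop8}; your orthant decomposition of $\prod_{k\in A'}\chi^{b_k}_{\alpha_k,z_k}(u_k)$ into its positive and negative parts is a valid, more self-contained alternative (one should just note that the restricted measures are supported in a compact band in the $A'$-coordinates, so the resulting integrals are finite and box-monotone in $\mathbf y_A$). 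Second, the paper explicitly omits the details of the Carath\'eodory extension and of the $\sigma$-additivity of the premeasure, merely remarking that the one-sided continuity assumptions (a) and (b) are essential there; your coordinate-by-coordinate inner/outer approximation argument is exactly the content being omitted, so your write-up would in fact be more complete than the paper's on this point.
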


\begin{proof}
First we prove the implication ($\Leftarrow$).

By (i) and (ii), we obtain that $f(x_1,\dots,x_d)$ is finite for every $(x_1,\dots,x_d)\in\mathbf I$. Moreover, $f(x_1,\dots,x_d)=0$, whenever $x_j=\alpha_j$ for some $j$. Therefore condition (c) is satisfied.

For $y,z\in\mathbb R$, $y<z$, and $b\in\{L,r\}$ we define
\begin{equation}\label{eq:defJ}
J^b_{y,z}=
\begin{cases}
[y,z)&\text{if }b=L,\\
(y,z]&\text{if }b=r.
\end{cases}
\end{equation}

For $j=1,2,\dots,d$ we take arbitrary points $y_j,z_j\in I_j$, $y_j<z_j$.
Then, we have
\begin{equation*}
\begin{split}
\mu\left(\prod_{j=1}^dJ^{b_j}_{y_j,z_j}\right)
=&\idotsint\limits_{\mathbf I}\prod_{j=1}^d\chi_{y_j,z_j}^{b_j}(u_j)\ d\mu(u_1,\dots,u_d)\\
=&\idotsint\limits_{\mathbf I}\prod_{j=1}^d(\chi_{\alpha_j,z_j}^{b_j}(u_j)-\chi_{\alpha_j,y_j}^{b_j}(u_j))\ d\mu(u_1,\dots,u_d)\\
=&\sum_{B\subset\{1,\dots,d\}}(-1)^{|B|}\idotsint\limits_{\mathbf I}\prod_{j=1}^d\chi_{\alpha_j,\hat y_{B,j}}^{b_j}(u_j)\ d\mu(u_1,\dots,u_d)\\
=&\sum_{B\subset\{1,\dots,d\}}(-1)^{|B|}f(\hat y_{B,1},\dots,\hat y_{B,d}),
\end{split}
\end{equation*}
where $\hat y_{B,j}=y_j$ for $j\in B$, and  $\hat y_{B,j}=z_j$ for $j\notin B$. On the other hand, by \eqref{eq:prop5} for $n_1=\dots=n_d=1$, we obtain
$$\left[\ \LARGE{\substack{y_1,\ z_1\\\\y_2,\ z_2\\\\\dots\\\\y_d,\ z_d}};f\right]
=\sum_{B\subset\{1,\dots,d\}}\frac{f(\hat y_{B,1},\dots,\hat y_{B,d})}{\prod_{i=j}^d(\hat y_{B,j}-\hat y_{B',j})}
=\frac1{\prod_{j=1}^d(z_j-y_j)}\cdot\sum_{B\subset\{1,\dots,d\}}(-1)^{|B|}f(\hat y_{B,1},\dots,\hat y_{B,d}).$$
Consequently,
\begin{equation}\label{eq:defmu}
\mu\left(\prod_{j=1}^dJ^{b_j}_{y_j,z_j}\right)
=\sum_{B\subset\{1,\dots,d\}}(-1)^{|B|}f(\hat y_{B,1},\dots,\hat y_{B,d})
=\prod_{j=1}^d(z_j-y_j)\cdot\left[\ \LARGE{\substack{y_1,\ z_1\\\\y_2,\ z_2\\\\\dots\\\\y_d,\ z_d}};f\right].
\end{equation}
It follows that the divided difference that appears in \eqref{eq:defmu} is non-negative. Since $y_j<z_j$ ($j=1,2,\dots,d$) were chosen arbitrarily, the function $f$ is box-monotone.

By the box-monotonicity of $f$, condition (c) and Theorem~\ref{thm:prop8}, we obtain that $f$ is $(1,1,\dots,1)$-regular.

Now, we will show that condition (b) is satisfied.
Let $i=1,2,\dots,d$ be fixed. Assume that $b_i=r$. The functions $g_{x_1,\dots,x_d}\colon\mathbf I\to\mathbb R$ given by formula
$g_{x_1,\dots,x_d}(u_1,\dots,u_d)=\prod_{j=1}^d\chi_{\alpha_j,x_j}^{b_j}(u_j)$ are obviously bounded.
Observe, that if $x_i\downarrow\hat x_i$, then 
$\chi_{\alpha_i,x_i}^r$ is pointwise convergent to $\chi_{\alpha_i,\hat x_i}^r$. Thus, $g_{x_1,\dots,x_i,\dots,x_d}$ converges to $g_{x_1,\dots,\hat x_i,\dots,x_d}$ pointwise.
Consequently, by Lebesgue's dominated convergence theorem, we get $$\lim_{x_i\downarrow \hat x_i}f(x_1,\dots,x_i,\dots,x_d)=f(x_1,\dots,\hat x_i,\dots,x_d),$$
therefore condition (b) holds.

Similarly, one can prove that if $b_i=L$, then $f_{\{i\}}^{\mathbf y}$ is a~left-continuous function for every $\mathbf y\in\mathbf I_{\{i\}'}$.
We will show that $f_{\{i\}}^{\mathbf y}$ is a jump function.
We consider only the case when $y_j\geq\alpha_j$ for all $j\neq i$ (all the other cases are similar). Let $x<y$, $x,y\in I_i$. By condition (iii), we have
\begin{equation*}
\begin{split}
f(y-)-f(x+)=&\mu\left(\prod_{j=1}^{i-1}J^{b_j}_{\alpha_j,x_j}\times(x,y)\times\prod_{j=i+1}^dJ^{b_j}_{\alpha_j,x_j}\right)
=\sum_{x<t<y}\mu\left(\prod_{j=1}^{i-1}J^{b_j}_{\alpha_j,x_j}\times\{t\}\times\prod_{j=i+1}^dJ^{b_j}_{\alpha_j,x_j}\right)\\
=&\sum_{x<t<y}(f(t+)-f(t-)).
\end{split}
\end{equation*}
We obtained that equation \eqref{eq:jump} holds for each $x<y$, which implies that $f_{\{i\}}^{\mathbf y}$ is a jump function.
This completes the proof of condition (a).


Now, we are going to the proof of the implication ($\Rightarrow$).

We need to define the measure $\mu$. First, by formula \eqref{eq:defmu}, we define $\mu$ for the rectangles of the form $\prod_{j=1}^dJ^{b_j}_{y_j,z_j}$, where $y_j,z_j\in I_j$, $y_j<z_j$, and $J^b_{y,z}$ is given by \eqref{eq:defJ}. Using the standard methods, we may uniquely extend $\mu$ to a Borel measure on $\mathbf I$. In this step, the assumptions on one-sided continuity of $f$ ((a) and (b)) play an important role. We omit the details of the proof of the extension.
Note, that every measure $\mu$ satisfying condition (i) needs to fulfill equality \eqref{eq:defmu} for all rectangles. Thus, $\mu$ is uniquely determined.

If $K\subset\mathbf I$ is compact, then it is a~subset of some rectangle $\mathbf J=\prod_{j=1}^dJ^{b_j}_{y_j,z_j}$. Therefore, $\mu(K)\leq\mu(\mathbf J)<\infty$, which proves (ii).

We will show (i). Let $(x_1,x_2,\dots,x_d)\in\mathbf I$.
For each $j=1,2,\dots,d$ we put $(y_j,z_j)=(\min(x_j,\alpha_j),\max(x_j,\alpha_j))$. Note that if $y_j=\alpha_j$, then $\chi_{\alpha_j,x_j}^{b_j}=\chi_{y_j,z_j}^{b_j}$. Otherwise, we have $\chi_{\alpha_j,x_j}^{b_j}=-\chi_{y_j,z_j}^{b_j}$.
Consequently, letting $k=|\{j\colon x_j<\alpha_j\}|$, we obtain
\begin{equation*}
\begin{split}
\idotsint\limits_{\mathbf I}&\prod_{j=1}^d\chi_{\alpha_j,x_j}^{b_j}(u_j)\ d\mu(u_1,\dots,u_d)
=(-1)^k\idotsint\limits_{\mathbf I}\prod_{j=1}^d\chi_{y_j,z_j}^{b_j}(u_j)\ d\mu(u_1,\dots,u_d)\\
=&(-1)^k\mu\left(\prod_{j=1}^dJ^{b_j}_{y_j,z_j}\right)
=(-1)^k\sum_{B\subset\{1,\dots,d\}}(-1)^{|B|}f(\hat y_{B,1},\dots,\hat y_{B,d}).
\end{split}
\end{equation*}
Using (c), we obtain that  the only non-zero summand in the last sum is the one for $B=\{j\colon x_j<\alpha_j\}$. This summand equals $(-1)^{|B|}f(\hat y_{B,1},\dots,\hat y_{B,d})=(-1)^kf(x_1,x_2,\dots,x_d)$.
Therefore, we obtain (i).

It remains to show (iii). We fix $i=1,2,\dots,d$ such that $b_i=L$. By the definition of $\mu$ and formula \eqref{eq:defmu}, condition (a) implies that the function $z_i\mapsto\mu\left(\prod_{j=1}^dJ^{b_j}_{y_j,z_j}\right)$ is a jump function. Thus, we have that the $i$th marginal of $\mu$ restricted to any rectangle of the form $\prod_{j=1}^dJ^{b_j}_{y_j,z_j}$ is a discrete measure. Consequently, condition (iii) is satisfied.
\end{proof}

\begin{Thm}\label{th:boxmonint}
Let $f\colon\mathbf I\to\mathbb R$ be a function. Let $\boldsymbol\alpha=(\alpha_1,\alpha_2,\dots,\alpha_d)\in\mathbf I$ be fixed.
Then the following conditions are equivalent:
\begin{itemize}
\item[$\bullet$] $f$ is box-monotone,
\item[$\bullet$] there exist a~pseudo-polynomial $W\colon\mathbf I\to\mathbb R$ of the degree $(0,0,\dots,0)$ and Borel measures $\mu_{\mathbf b}$ on $\mathbf I$ $(\mathbf b\in\{L,r\}^d)$ such that
\begin{itemize}
\item[(i)]
\begin{equation}\label{eq:boxmon}
f(x_1,\dots,x_d)=W(x_1,\dots,x_d)+\sum_{\mathbf b\in\{L,r\}^d}\idotsint\limits_{\mathbf I}\prod_{j=1}^d\chi_{\alpha_j,x_j}^{b_j}(u_j)\ d\mu_{\mathbf b}(u_1,\dots,u_d),
\end{equation}
\item[(ii)] $\mu_{\mathbf b}(K)<\infty$ for every compact set $K\subset\mathbf I$ and $\mathbf b\in\{L,r\}^d$,
\item[(iii)] for each $\mathbf b\in\{L,r\}^d$ and $i=1,2,\dots,d$, if $b_i=L$, then the marginal measure of $\mu_{\mathbf b}$, corresponding to the $i$th coordinate is a discrete measure.
\end{itemize}
\end{itemize}

\noindent Moreover, for a box-monotone function $f$, the pseudopolynomial $W$ and the measures $\mu_{\mathbf b}$ in the representation \eqref{eq:boxmon} are uniquely determined.
\end{Thm}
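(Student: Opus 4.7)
The plan is to reduce the theorem to two tools already at our disposal: Theorem~\ref{th:boxmon1}, which gives the integral representation for box-monotone functions that vanish on every boundary hyperplane $\{x_i=\alpha_i\}$ and have the prescribed one-sided continuity; and the decomposition Theorem~\ref{th:rozklad}, which writes a $(1,\dots,1)$-regular box-monotone function as a sum indexed by $\{L,r\}^d$. The pseudo-polynomial $W$ will absorb whatever these two tools cannot see.

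For the implication ($\Leftarrow$) I would simply observe that $W$ is box-$(1,\dots,1)$-affine by Lemma~\ref{lem:pseudo3}, and that each integral summand is box-monotone by the converse direction of Theorem~\ref{th:boxmon1}; their sum $f$ is therefore box-monotone.

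For the implication ($\Rightarrow$) the skeleton is as follows. First I apply Proposition~\ref{prop:regularize} to split $f=W_0+g$, where $W_0$ is a pseudo-polynomial of degree $(0,\dots,0)$ and $g$ is $(1,\dots,1)$-regular and box-monotone. Then I apply Theorem~\ref{th:rozklad} to obtain $g=\sum_{\mathbf b\in\{L,r\}^d}g_{\mathbf b}$, with each $g_{\mathbf b}$ being $(1,\dots,1)$-regular, box-monotone, and satisfying conditions (a), (b) of Theorem~\ref{th:boxmon1} together with the vanishing $(g_{\mathbf b})_{\{i\}}^{\mathbf y}(\alpha_i)=0$ for indices $i$ with $b_i=L$. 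The main obstacle is that $g_{\mathbf b}$ generally fails the vanishing condition (c) of Theorem~\ref{th:boxmon1} along the coordinates with $b_i=r$, so Theorem~\ref{th:boxmon1} cannot be applied directly. I would overcome this by an inclusion--exclusion correction: writing $\mathbf x_S^{\boldsymbol\alpha}$ for the vector obtained from $\mathbf x$ by replacing $x_i$ with $\alpha_i$ for $i\in S$, set
$$
h_{\mathbf b}(\mathbf x)=\sum_{S\subset\{i:\,b_i=r\}}(-1)^{|S|}\,g_{\mathbf b}(\mathbf x_S^{\boldsymbol\alpha}).
$$
Then $h_{\mathbf b}$ vanishes on every hyperplane $\{x_i=\alpha_i\}$ (directly when $b_i=L$, since each term already has a zero factor; by cancellation of the pairs $(S,S\cup\{i\})$ when $b_i=r$); each summand with $S\neq\emptyset$ depends on at most $d-1$ of the variables, so its $(1,\dots,1)$-divided differences vanish, whence $h_{\mathbf b}$ is box-monotone and $(1,\dots,1)$-regular, inheriting the one-sided continuity in each coordinate from $g_{\mathbf b}$ via Remark~\ref{rem:rem6}; and $g_{\mathbf b}-h_{\mathbf b}$, being a sum of functions each independent of at least one variable, is a pseudo-polynomial of degree $(0,\dots,0)$. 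Setting $W=W_0+\sum_{\mathbf b}(g_{\mathbf b}-h_{\mathbf b})$ and applying Theorem~\ref{th:boxmon1} to each $h_{\mathbf b}$ yields the required decomposition $f=W+\sum_{\mathbf b}\idotsint_{\mathbf I}\prod_j\chi_{\alpha_j,x_j}^{b_j}\,d\mu_{\mathbf b}$ with $\mu_{\mathbf b}$ satisfying (ii) and (iii).

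For uniqueness, suppose that $(W,\{\mu_{\mathbf b}\})$ and $(\widetilde W,\{\widetilde\mu_{\mathbf b}\})$ both represent $f$. Every integral term vanishes on each hyperplane $\{x_j=\alpha_j\}$, so $W-\widetilde W$ does too; since $W-\widetilde W$ is box-$(1,\dots,1)$-affine by Lemma~\ref{lem:pseudo3} and agrees with the zero function at the single node $u_{j1}=\alpha_j$ in each coordinate, Proposition~\ref{prop:prop6} forces $W=\widetilde W$. The remaining equality of the integral sums, combined with the uniqueness clause of Theorem~\ref{th:rozklad} (which matches the two sides summand-by-summand through the continuity and vanishing properties guaranteed by Theorem~\ref{th:boxmon1}) and finally the uniqueness clause of Theorem~\ref{th:boxmon1} itself, gives $\mu_{\mathbf b}=\widetilde\mu_{\mathbf b}$ for every $\mathbf b$.
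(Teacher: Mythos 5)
Your proof is correct, and its skeleton --- regularize via Proposition~\ref{prop:regularize}, decompose via Theorem~\ref{th:rozklad}, represent each piece via Theorem~\ref{th:boxmon1}, and run uniqueness through Proposition~\ref{prop:prop6}, Theorem~\ref{th:rozklad} and Theorem~\ref{th:boxmon1} in that order --- is exactly the route the paper takes; the paper merely states the implication ($\Rightarrow$) as an ``immediate consequence'' of Lemma~\ref{lem:pseudo2} and Theorems~\ref{thm:prop8}, \ref{th:rozklad} and \ref{th:boxmon1} without writing out the details. The one place where you add something of substance is the inclusion--exclusion correction $h_{\mathbf b}(\mathbf x)=\sum_{S\subset\{i:\,b_i=r\}}(-1)^{|S|}g_{\mathbf b}(\mathbf x_S^{\boldsymbol\alpha})$, which you introduce because Theorem~\ref{th:rozklad} only guarantees the vanishing $(g_{\mathbf b})_{\{i\}}^{\mathbf y}(\alpha_i)=0$ for coordinates with $b_i=L$, while condition (c) of Theorem~\ref{th:boxmon1} demands it for all $i$. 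This correction is sound: each $S\neq\emptyset$ term is independent of at least one variable, hence box-$(1,\dots,1)$-affine and a pseudo-polynomial of degree $(0,\dots,0)$, so $h_{\mathbf b}$ stays box-monotone, regular, and correctly one-sided continuous, and the discrepancy is absorbed into $W$. It is worth noting, though, that the correction becomes unnecessary if you choose the interpolation nodes in Lemma~\ref{lem:pseudo2} (hence in Proposition~\ref{prop:regularize}) to be $u_{i1}=\alpha_i$: then $g=f-W_0$ vanishes on every hyperplane $\{x_i=\alpha_i\}$, and since the operators $\cdot_{(j,b_j)}$ act section-by-section in the $j$th variable they commute with restriction to $\{x_i=\alpha_i\}$, while $(g_{(i,r)})_{\{i\}}^{\mathbf y}(\alpha_i)=g_{\{i\}}^{\mathbf y}(\alpha_i)-(g_{\{i\}}^{\mathbf y})_L(\alpha_i)=0$; so every summand $g_{\mathbf b}$ already satisfies condition (c). This is presumably the reading the authors intended; your version buys independence from the choice of nodes at the cost of one extra algebraic step.
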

\begin{proof}
If the function $f$ is of the form \eqref{eq:boxmon}, then, by Theorems~\ref{thm:pseudo} and \ref{th:boxmon1}, it is box-monotone as a sum of box-monotone functions. The proof of the implication ($\Leftarrow$) is completed.

The implication ($\Rightarrow$) is an immediate consequence of Lemma~\ref{lem:pseudo2} and Theo\-rems~\ref{thm:prop8}, \ref{th:rozklad} and \ref{th:boxmon1}.

Now, we will prove the uniqueness of the representation. Assume that \eqref{eq:boxmon} holds both for $W$, $\mu_{\mathbf b}$ and $\hat W$, $\hat \mu_{\mathbf b}$ ($\mathbf b\in\{L,r\}^d$). Then we have
$$(\hat W-W)(x_1,\dots,x_d)=\sum_{\mathbf b\in\{L,r\}^d}\idotsint\limits_{\mathbf I}\prod_{j=1}^d\chi_{\alpha_j,x_j}^{b_j}(u_j)\ d(\mu_{\mathbf b}-\hat\mu_{\mathbf b})(u_1,\dots,u_d).$$
We obtain that $\hat W-W$ is
a~pseudo-polynomial of the degree $(0,0,\dots,0)$ such that $(\hat W-W)(x_1,\dots,x_d)=0$, whenever $x_i=\alpha_i$ for some $i=1,2,\dots,d$. By Proposition~\ref{prop:prop6}, it follows that $W=\hat W$. Consequently, by Theorem~\ref{th:rozklad}, all the summands on the right side are zero functions, and by Theorem~\ref{th:boxmon1}, $\mu_{\mathbf b}=\hat\mu_{\mathbf b}$ for all $\mathbf b\in\{L,r\}^d$. This completes the proof of the theorem.
\end{proof}

\begin{Rem}
Note, that each $\mathbf b\in\{L,r\}^d$ the summand corresponding to $\mathbf b$ in the representation \eqref{eq:boxmon} can be regarded as a~generalized cumulative distribution function of the measure $\mu_{\mathbf b}$.

Let us consider the case, when all the measures $\mu_{\mathbf b}$ are finite, which is equivalent to the existence of $M\in\mathbb R$ such that for arbitrary points $y_j,z_j\in I_j$, $j=1,2,\dots,d$ we have
\begin{equation}\label{eq:finite}
\prod_{j=1}^d(z_j-y_j)\cdot\left[\ \LARGE{\substack{y_1,\ z_1\\\\y_2,\ z_2\\\\\dots\\\\y_d,\ z_d}};f\right]
=\sum_{B\subset\{1,\dots,d\}}(-1)^{|B|}f(\hat y_{B,1},\dots,\hat y_{B,d})<M,
\end{equation}
where $\hat y_{B,j}=y_j$ for $j\in B$, and $\hat y_{B,j}=z_j$ for $j\notin B$. In that case formula~\eqref{eq:boxmon} can be written using probabilistic tools.

Let $\mathbf b\in\{L,r\}^d$ and let $c_{\mathbf b}=\mu_{\mathbf b}(\mathbf I)<\infty$. There exists a~random vector $\mathbf X_{\mathbf b}=(X_{\mathbf b,1},\dots,X_{\mathbf b,d})$ such that $\mu_{\mathbf b}/c_{\mathbf b}$ is the probability distribution of the vector $\mathbf X_{\mathbf b}$, whenever $c_{\mathbf b}>0$. We have
\begin{equation}\label{eq:random}
\begin{split}
\idotsint\limits_{\mathbf I}&\prod_{j=1}^d\chi_{\alpha_j,x_j}^{b_j}(u_j)\ d\mu_{\mathbf b}(u_1,\dots,u_d)
=c_{\mathbf b}\idotsint\limits_{\mathbf I}\prod_{j=1}^d(\mathbf1_{(-\infty,x_j\rangle}(u_j)-\mathbf1_{(-\infty,\alpha_j\rangle}(u_j))\ dP_{\mathbf X_{\mathbf b}}(u_1,\dots,u_d)\\
=&c_{\mathbf b}
\sum_{B\subset\{1,\dots,d\}}(-1)^{|B|}
P(X_{\mathbf b,1}<_{b_1}\hat x_{B,1},\dots,X_{\mathbf b,d}<_{b_d}\hat x_{B,d}),
\end{split}
\end{equation}
where the bracket $\rangle$ in the $j$th factor is either $)$ if $b_j=L$ or $]$ if $b_j=r$, the symbol $<_L$ denotes $<$, the symbol $<_r$ denotes $\leq$; $\hat x_{B,j}=\alpha_j$ for $j\in B$, and $\hat x_{B,j}=x_j$ for $j\notin B$.

If $B\neq\emptyset$, then there exists $i\in B$, and the corresponding summand in \eqref{eq:random} does not depend on $x_i$. Consequently, this summand is a~box-$(1,\dots,1)$-affine function.
It follows, that the expression \eqref{eq:random} is a sum of the~pseudo-polynomial of degree $(0,\dots,0)$ and the summand corresponding to $B=\emptyset$, namely ${c_{\mathbf b}\cdot P}(X_{\mathbf b,1}<_{b_1} x_1,\dots,X_{\mathbf b,d}<_{b_d}x_d)$.

Finally we obtain that every box-monotone function $f$ satisfying condition \eqref{eq:finite} (e.g. every bounded box-monotone function $f$) has the following
proba\-bi\-li\-stic representation
\begin{equation}\label{eq:random2}
f(x_1,\dots,x_d)=V(x_1,\dots,x_d)+\sum_{\mathbf b\in\{L,r\}^d}c_{\mathbf b}\cdot P(X_{\mathbf b,1}<_{b_1} x_1,\dots,X_{\mathbf b,d}<_{b_d}x_d),
\end{equation}
where $V$ is a~pseudo-polynomial of degree $(0,\dots,0)$, and for every $\mathbf b\in\{L,r\}^d$ $\mathbf X_{\mathbf b}=(X_{\mathbf b,1},\dots,X_{\mathbf b,d})$ is a~random vector and $c_{\mathbf b}\geq0$. On the other hand, it can be shown that every function $f$ of the form \eqref{eq:random2} is box-monotone and it satisfies condition \eqref{eq:finite}.
\end{Rem}

\section{Integral representation of box-$\textbf{n}$-convex functions}

\begin{Thm}\label{th:bismnrepres}
Let $I_1, I_2,\dots,I_d$ be open intervals (bounded or unbounded) and $f\colon\mathbf I\to\mathbb R$ be a function. Let $\boldsymbol\alpha=(\alpha_1,\alpha_2,\dots,\alpha_d)\in\mathbf I$ and $\mathbf n=(n_1,n_2,\dots,n_d)\in\mathbb N^d$ be such that $n_i\geq1$ for $i=1,2,\dots,d$.

Then $f$ is box-$\mathbf n$-convex, if and only if it is of the form 
\begin{equation}\label{eq:represtresbis}
f(x_1,\dots,x_d)=W(x_1,\dots,x_d)
+\sum_{\mathbf b\in\{L,r\}^d}\idotsint\limits_{\mathbf I}\prod_{j=1}^d\frac{(x_j-u_j)^{n_j-1}}{(n_j-1)!}\chi_{\alpha_j,x_j}^{b_j}(u_j)\ d\mu_{\mathbf b}(u_1,\dots,u_d),
\end{equation}
where $W\colon\mathbf I\to\mathbb R$ is a pseudo-polynomial of degree $(n_1-1,\dots,n_d-1)$ and measures $\mu_{\mathbf b}$ $(\mathbf b\in\{L,r\}^d)$ satisfy conditions (ii) and (iii) of Theorem~\ref{th:boxmonint} $($here we use the standard convention that $0^0=1)$.
\end{Thm}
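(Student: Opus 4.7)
The strategy is to reduce to the box-monotone case treated in Theorem~\ref{th:boxmonint} by iterated right-differentiation, and to recover the $\mathbf n$-convex case by iterated integration, using Proposition~\ref{prop:regularize} to pass to $\mathbf n$-regular representatives at the outset.

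For sufficiency ($\Leftarrow$), the pseudo-polynomial $W$ of degree $(n_1-1,\ldots,n_d-1)$ is box-$\mathbf n$-affine by Theorem~\ref{thm:pseudo}. For each $\mathbf b\in\{L,r\}^d$, I would apply Fubini to commute the multiple divided difference of order $\mathbf n$ with integration against $\mu_{\mathbf b}$; by Remark~\ref{rem:rem4}(a), this reduces the question to showing that each one-variable factor
$$x_j \mapsto \frac{(x_j-u_j)^{n_j-1}}{(n_j-1)!}\chi_{\alpha_j,x_j}^{b_j}(u_j)$$
is box-$(n_j)$-convex for fixed $u_j$. A direct case analysis on the sign of $u_j-\alpha_j$ shows that this factor is (up to sign) a truncated power $(x_j-u_j)_+^{n_j-1}/(n_j-1)!$ shifted appropriately, whose $(n_j-1)$-convexity is classical. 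Equivalently, one obtains it by iterating the one-variable case of Lemma~\ref{lem:calka} applied to the non-decreasing function $\chi_{\alpha_j,\cdot}^{b_j}(u_j)$, via the identity $\int_{\alpha_j}^{x_j}\frac{(t-u_j)^{k-1}}{(k-1)!}\chi_{\alpha_j,t}^{b_j}(u_j)\,dt = \frac{(x_j-u_j)^{k}}{k!}\chi_{\alpha_j,x_j}^{b_j}(u_j)$, verified by inspection.

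For necessity ($\Rightarrow$), I first apply Proposition~\ref{prop:regularize} to write $f = W_0 + \tilde f$, where $W_0$ is a pseudo-polynomial of degree $(n_1-1,\ldots,n_d-1)$ and $\tilde f$ is $\mathbf n$-regular and box-$\mathbf n$-convex. Iterating Lemma~\ref{lem:lem15v2} exactly $n_i-1$ times in each variable $x_i$ produces a $(1,\ldots,1)$-regular, box-monotone function $g$. Applying Theorem~\ref{th:boxmonint} to $g$ yields
$$g(\mathbf x) = V(\mathbf x) + \sum_{\mathbf b\in\{L,r\}^d}\idotsint_{\mathbf I}\prod_{j=1}^d \chi_{\alpha_j,x_j}^{b_j}(u_j)\,d\mu_{\mathbf b}(u_1,\ldots,u_d),$$
where $V$ is a pseudo-polynomial of degree $(0,\ldots,0)$ and the measures $\mu_{\mathbf b}$ satisfy (ii), (iii). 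I would then iterate the integration $\int_{\alpha_i}^{x_i}\cdots\,dt_i$ exactly $n_i-1$ times in each variable $x_i$, applied to $g$. By Lemma~\ref{lem:calka} each single integration preserves $\mathbf n$-regularity and box-$\mathbf n$-convexity with $n_i$ advanced by one, and by Fubini together with the one-variable kernel identity above, the factor $\chi_{\alpha_j,x_j}^{b_j}(u_j)$ iterates to $\frac{(x_j-u_j)^{n_j-1}}{(n_j-1)!}\chi_{\alpha_j,x_j}^{b_j}(u_j)$, while $V$ integrates to another pseudo-polynomial.

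The final step, and the main source of bookkeeping, is to recover $\tilde f$ from the iterated integral $F$ of $g$ after this differentiate-then-integrate round trip. By a multi-dimensional Taylor-type identity, $\tilde f - F$ is annihilated by the iterated right-derivative $\prod_i(\partial/\partial x_i)^{n_i-1}$, and a short induction (paralleling the Fubini-style arguments of Sections~4 and 6) shows that the kernel of this operator on $\mathbf n$-regular functions consists exactly of pseudo-polynomials of degree $(n_1-2,\ldots,n_d-2)$. Absorbing $W_0$, the pseudo-polynomial image of $V$, and $\tilde f - F$ into one pseudo-polynomial $W$ of degree $(n_1-1,\ldots,n_d-1)$ yields representation~\eqref{eq:represtresbis}. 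The hardest part is carrying out this multi-dimensional integration-by-parts accounting cleanly in the presence of possibly non-finite $\mu_{\mathbf b}$, which is precisely where the apparatus from Sections~5 and 6 does the heavy lifting.
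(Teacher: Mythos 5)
Your proposal is correct in substance and draws on the same toolkit as the paper (Proposition~\ref{prop:regularize}, Lemma~\ref{lem:lem15v2}, Theorem~\ref{th:boxmonint}, Lemma~\ref{lem:calka} and the kernel identity $\int_{\alpha_i}^{x_i}\frac{(t-u_i)^{k-1}}{(k-1)!}\chi_{\alpha_i,t}^{b_i}(u_i)\,dt=\frac{(x_i-u_i)^{k}}{k!}\chi_{\alpha_i,x_i}^{b_i}(u_i)$), but the organization of the necessity direction is genuinely different. The paper runs a single induction on $n_1+\dots+n_d$: at each step it peels off one right-derivative via Lemma~\ref{lem:lem15v2}, applies the induction hypothesis to $\psi$, and re-integrates immediately, so the ``constant of integration'' $f(x_1,\dots,\alpha_i,\dots,x_d)+\int_{\alpha_i}^{x_i}V(x_1,\dots,t,\dots,x_d)\,dt$ is visibly a pseudo-polynomial of degree $(n_1-1,\dots,n_d-1)$ and no global round-trip accounting is ever needed. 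Your two-pass scheme (differentiate all the way down to the box-monotone case, then integrate all the way back up) works, but it forces you to characterize the kernel of the iterated right-derivative on $\mathbf n$-regular functions; that characterization is true and provable by iterating the identity $h=h|_{x_i=\alpha_i}+\int_{\alpha_i}^{x_i}\psi$ from Lemma~\ref{lem:lem15v2}, but it is an extra lemma that the interleaved induction avoids, and it should be proved rather than asserted. On sufficiency, your Fubini-plus-Remark~\ref{rem:rem2} reduction to one-variable factors is a legitimate and arguably more elementary alternative to the paper's induction via Lemma~\ref{lem:calka}; just note that for $u_j$ on the $\alpha_j$-side of $x_j$ the factor is not ``up to sign'' a truncated power (that would be box-$(n_j)$-concave) but equals $(-1)^{n_j}(x_j-u_j)_-^{n_j-1}/(n_j-1)!$, which is box-$(n_j)$-convex because it differs from $(x_j-u_j)_+^{n_j-1}/(n_j-1)!$ by the box-$(n_j)$-affine polynomial $(x_j-u_j)^{n_j-1}/(n_j-1)!$ (cf.\ Lemma~\ref{lem:lem102}). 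Your Fubini interchanges are justified since each kernel is supported in the compact box between $\boldsymbol\alpha$ and $\mathbf x$, where the measures $\mu_{\mathbf b}$ are finite by condition (ii).
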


\begin{proof}
We show the implication ($\Rightarrow$). Taking into account Proposition~\ref{prop:regularize}, it is enough to prove the implication in the case, when the function $f$ is $\mathbf n$-regular. We use an induction on $n_1+\dots+n_d$. If $n_1+\dots+n_d=d$, then \eqref{eq:represtresbis} is a consequence of Theorem~\ref{th:boxmonint}.
If $n_1+\dots+n_d>d$, then $n_i\geq2$ for some $i$. By Lemma~\ref{lem:lem15v2}, we have $f(x_1,\dots,x_d)=f(x_1,\dots,\alpha_i,\dots,x_d)+\int_{\alpha_i}^{x_i} \psi(x_1,\dots,t,\dots,x_d)dt$, where $t$ and $\alpha_i$ stand at the $i$th position, and $\psi\colon\mathbf I\to\mathbb R$ is a~$(n_1,\dots,n_i-1,\dots,n_d)$-regular and box-$(n_1,\dots,n_i-1,\dots,n_d)$-convex function.
By the induction hypothesis, we have
\begin{multline*}
\psi(x_1,\dots,x_d)=V(x_1,\dots,x_d)\\
+\sum_{\mathbf b\in\{L,r\}^d}\idotsint\limits_{\mathbf I}
\frac{(x_i-u_i)^{n_i-2}}{(n_i-2)!}\chi_{\alpha_i,x_i}^{b_i}(u_i)\prod_{j\neq i}\frac{(x_j-u_j)^{n_j-1}}{(n_j-1)!}\chi_{\alpha_j,x_j}^{b_j}(u_j)\ d\mu_{\mathbf b}(u_1,\dots,u_d).
\end{multline*}
Taking $W(x_1,\dots,x_d)=f(x_1,\dots,\alpha_i,\dots,x_d)+\int_{\alpha_i}^{x_i}V(x_1,\dots,t,\dots,x_d)dt$ and observing that
$$\int_{\alpha_i}^{x_i}\frac{(t-u_i)^{n_i-2}}{(n_i-2)!}\chi_{\alpha_i,t}^{b_i}(u_i)dt=\frac{(x_i-u_i)^{n_i-1}}{(n_i-1)!}\chi_{\alpha_i,x_i}^{b_i}(u_i),$$ we obtain \eqref{eq:represtresbis}.

We pass to the proof of the implication ($\Leftarrow$). We assume that the function $f$ is of the form \eqref{eq:represtresbis}. It is enough to prove that each of the summands is box-$\mathbf n$-convex. We will proceed by induction on $n_1+\dots+n_d$. If $n_1+\dots+n_d=d$, then the box-$\mathbf n$-convexity of each summand is a consequence of Theorem~\ref{th:boxmonint}. If $n_1+\dots+n_d>d$, then $n_i\geq2$ for some $i$. The box-$\mathbf n$-convexity of

\begin{multline*}
\idotsint\limits_{\mathbf I}\prod_{j=1}^d\frac{(x_j-u_j)^{n_j-1}}{(n_j-1)!}\chi_{\alpha_j,x_j}^{b_j}(u_j)\ d\mu_{\mathbf b}(u_1,\dots,u_d)\\\\
=\int_{\alpha_i}^{x_i}\idotsint\limits_{\mathbf I}
\frac{(t-u_i)^{n_i-2}}{(n_i-2)!}\chi_{\alpha_i,t}^{b_i}(u_i)\prod_{j\neq i}\frac{(x_j-u_j)^{n_j-1}}{(n_j-1)!}\chi_{\alpha_j,x_j}^{b_j}(u_j)\ d\mu_{\mathbf b}(u_1,\dots,u_d)dt
\end{multline*}
is a consequence of the induction hypothesis and Lemma~\ref{lem:calka}.
The theorem is proved.
\end{proof}

The above integral representation becomes much simpler when all $n_i$'s are greater than $1$.

\begin{Thm}\label{th:mnrepres}
Let $I_1, I_2,\dots,I_d$ be open intervals (bounded or unbounded) and $f\colon\mathbf I\to\mathbb R$ be a function. Let $\boldsymbol\alpha=(\alpha_1,\alpha_2,\dots,\alpha_d)\in\mathbf I$ and $\mathbf n=(n_1,n_2,\dots,n_d)\in\mathbb N^d$ be such that $n_i\geq2$ for $i=1,2,\dots,d$.

Then $f$ is box-$\mathbf n$-convex, if and only if it is of the form 
\begin{equation}\label{eq:repres2}
f(x_1,\dots,x_d)=W(x_1,\dots,x_d)
+\idotsint\limits_{\mathbf I}\prod_{j=1}^d\frac{(x_j-u_j)^{n_j-1}}{(n_j-1)!}\chi_{\alpha_j,x_j}^r(u_j)\ d\mu(u_1,\dots,u_d),
\end{equation}
where $W\colon\mathbf I\to\mathbb R$ is a~pseudo-polynomial of degree $(n_1-1,\dots,n_d-1)$ and the~measure $\mu$ is finite on compact sets.
\end{Thm}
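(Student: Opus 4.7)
The implication $(\Leftarrow)$ is an immediate consequence of Theorem~\ref{th:bismnrepres}: given representation~\eqref{eq:repres2}, one sets $\mu_{(r,\dots,r)}=\mu$ and $\mu_{\mathbf b}=0$ for every other $\mathbf b\in\{L,r\}^d$. Condition~(iii) of Theorem~\ref{th:bismnrepres} is vacuously satisfied (no $b_i$ equals $L$) and condition~(ii) holds by assumption, so Theorem~\ref{th:bismnrepres} yields box-$\mathbf n$-convexity of~$f$.

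For $(\Rightarrow)$, the plan is to apply Theorem~\ref{th:bismnrepres} to $f$, obtaining a pseudo-polynomial $W_0$ of degree $(n_1-1,\dots,n_d-1)$ together with Borel measures $\mu_{\mathbf b}$ ($\mathbf b\in\{L,r\}^d$) satisfying (ii) and (iii), and then show that the $2^d$ summands collapse into a single integral with the fully right-continuous kernel, the discrepancy being absorbed into a larger pseudo-polynomial. The crucial pointwise identity, valid for every $n\geq 2$ and $\alpha,x,u\in\mathbb R$, is
$$(x-u)^{n-1}\chi^L_{\alpha,x}(u)=(x-u)^{n-1}\chi^r_{\alpha,x}(u)+(x-\alpha)^{n-1}\mathbf 1_{\{u=\alpha\}}(u),$$
which follows from $\chi^L_{\alpha,x}-\chi^r_{\alpha,x}=\mathbf 1_{\{u=\alpha\}}-\mathbf 1_{\{u=x\}}$ combined with the observation that the factor $(x-u)^{n-1}$ kills the contribution at $u=x$ whenever $n\geq 2$.

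Substituting this identity into every factor of the product inside each $\mathbf b$-summand for which $b_j=L$ and expanding produces a sum indexed by subsets $S\subset\{j:b_j=L\}$. The term $S=\emptyset$ is the fully $r$-kernel integrand, and summing these contributions over all $\mathbf b$ gives a single integral against $\mu:=\sum_{\mathbf b}\mu_{\mathbf b}$, which is finite on compacts as a finite sum of such measures. For each $S\neq\emptyset$, one picks any $j_0\in S$; the corresponding summand factors as $(x_{j_0}-\alpha_{j_0})^{n_{j_0}-1}$ times an integral depending only on $\mathbf x_{\{j_0\}'}$, and expanding the polynomial factor in powers of $x_{j_0}$ rewrites this contribution in the form $\sum_{k=0}^{n_{j_0}-1}A_{j_0 k}(\mathbf x_{\{j_0\}'})x_{j_0}^k$, i.e.\ a pseudo-polynomial piece of degree at most $(n_1-1,\dots,n_d-1)$. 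Collecting all such pieces together with $W_0$ produces the desired $W$ and yields \eqref{eq:repres2}.

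The main obstacle is justifying the slice integrals that arise for $S\neq\emptyset$: one needs the restriction of $\mu_{\mathbf b}$ to $\{u_j=\alpha_j:j\in S\}$, projected onto $\mathbf I_{S'}$, to be a well-defined Borel measure that is finite on compacts of $\mathbf I_{S'}$. This follows from condition~(ii) of Theorem~\ref{th:bismnrepres}, since every compact $K\subset\mathbf I_{S'}$ lifts to the compact set $K\times\prod_{j\in S}\{\alpha_j\}\subset\mathbf I$, on which $\mu_{\mathbf b}$ is finite. Once this technicality is in place, the rest of the argument is a purely algebraic expansion together with the elementary collection of like powers of the pivoted variables $x_{j_0}$.
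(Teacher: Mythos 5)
Your proposal is correct and follows essentially the same route as the paper: both directions rest on Theorem~\ref{th:bismnrepres}, and the key step for $(\Rightarrow)$ is the same pointwise identity $\frac{(x_j-u_j)^{n_j-1}}{(n_j-1)!}\chi^{L}_{\alpha_j,x_j}(u_j)=\frac{(x_j-u_j)^{n_j-1}}{(n_j-1)!}\chi^{r}_{\alpha_j,x_j}(u_j)+\frac{(x_j-u_j)^{n_j-1}}{(n_j-1)!}\mathbf 1_{\{\alpha_j\}}(u_j)$ (valid since $n_j\geq2$), followed by the same subset expansion in which every nonempty-index term is absorbed into a pseudo-polynomial of degree $(n_1-1,\dots,n_d-1)$ and the empty-index terms combine into a single integral against $\mu=\sum_{\mathbf b}\mu_{\mathbf b}$. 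Your extra remark on the well-definedness of the slice integrals is a harmless refinement of the same argument.
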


\begin{proof}
The implication ($\Leftarrow$) follows from Theorem~\ref{th:bismnrepres}.

We prove the implication $(\Rightarrow$).
By Theorem~\ref{th:bismnrepres}, the function $f$ is of the form \eqref{eq:represtresbis}.
Let $\mathbf b\in\{L,r\}^d$.
Let $\mathbf1_{b=L}=1$ if $b=L$, and $\mathbf1_{b=L}=0$ if $b=r$.
Since $n_1,\dots,n_d\geq2$, we have
\begin{equation*}
\frac{(x_j-u_j)^{n_j-1}}{(n_j-1)!}\chi_{\alpha_j,x_j}^{b_j}(u_j)
=\frac{(x_j-u_j)^{n_j-1}}{(n_j-1)!}\chi_{\alpha_j,x_j}^r(u_j)+\frac{(x_j-u_j)^{n_j-1}}{(n_j-1)!}\mathbf1_{\{\alpha_j\}}(u_j)\cdot\mathbf1_{b_j=L}.
\end{equation*}
Using the above identity we obtain
\begin{equation}\label{eq:boxrep}
\begin{split}
\idotsint\limits_{\mathbf I}&\prod_{j=1}^d\frac{(x_j-u_j)^{n_j-1}}{(n_j-1)!}\chi_{\alpha_j,x_j}^{b_j}(u_j)\ d\mu_{\mathbf b}(u_1,\dots,u_d)\\
=&\sum_{B\subset\{1,\dots,d\}}\idotsint\limits_{\mathbf I}\prod_{j=1}^d\frac{(x_j-u_j)^{n_j-1}}{(n_j-1)!}\hat\chi_{B,j}(u_j)\ d\mu_{\mathbf b}(u_1,\dots,u_d),
\end{split}
\end{equation}
where $\hat\chi_{B,j}(u_j)=\mathbf1_{\{\alpha_j\}}(u_j)\cdot\mathbf1_{b_j=L}$ for $j\in B$, and $\hat\chi_{B,j}(u_j)=\chi_{\alpha_j,x_j}^r(u_j)$ for $j\notin B$.

If $B\neq\emptyset$, then there exists $i\in B$, and the corresponding summand in \eqref{eq:boxrep} can be written as
\begin{equation*}
\begin{split}
&\idotsint\limits_{\mathbf I}\prod_{j=1}^d\frac{(x_j-u_j)^{n_j-1}}{(n_j-1)!}\hat\chi_{B,j}(u_j)\ d\mu_{\mathbf b}(u_1,\dots,u_d)\\
=\frac{(x_i-\alpha_i)^{n_i-1}}{(n_i-1)!}
&\cdot\idotsint\limits_{\mathbf I}\mathbf1_{\{\alpha_i\}}(u_i)\cdot\mathbf1_{b_i=L}\prod_{j\neq i}\frac{(x_j-u_j)^{n_j-1}}{(n_j-1)!}\hat\chi_{B,j}(u_j)\ d\mu_{\mathbf b}(u_1,\dots,u_d).
\end{split}
\end{equation*}
Since the last integral does not depend on $x_i$, we obtain that the above formula represents a~pseudo-polynomial of degree $(n_1-1,\dots,n_d-1)$.

It follows that the expression \eqref{eq:random} is a sum of a~pseudo-polynomial of degree $(n_1-1,\dots,n_d-1)$ and the summand corresponding to $B=\emptyset$, namely $\idotsint\limits_{\mathbf I}\prod_{j=1}^d\frac{(x_j-u_j)^{n_j-1}}{(n_j-1)!}\chi_{\alpha_j,x_j}^r(u_j)\ d\mu_{\mathbf b}(u_1,\dots,u_d)$.
Consequently, the function $f$ is of the form \eqref{eq:repres2}, where $\mu=\sum_{\mathbf b\in\{L,r\}^d}\mu_{\mathbf b}$. The theorem is proved.
\end{proof}


Using the same method as in the proof of Theorem~\ref{th:mnrepres}, we can prove the following corollary.

\begin{Cor}\label{cor:mnrepres}
Let $I_1, I_2,\dots,I_d$ be open intervals (bounded or unbounded) and $f\colon\mathbf I\to\mathbb R$ be a function. Let $\boldsymbol\alpha=(\alpha_1,\alpha_2,\dots,\alpha_d)\in\mathbf I$ and $\mathbf n=(n_1,n_2,\dots,n_d)\in\mathbb N^d$ be such that $n_i\geq1$ for $i=1,2,\dots,d$. We put $\mathbf B=\{(b_1,\dots,b_d)\in\{L,r\}^d:b_i=r$ if $n_i\geq2, i=1,\dots,d\}$.

Then $f$ is box-$\mathbf n$-convex, if and only if it is of the form 
\begin{equation}\label{eq:represbis100}
f(x_1,\dots,x_d)=W(x_1,\dots,x_d)
+\sum_{\mathbf b\in\mathbf B}\idotsint\limits_{\mathbf I}\prod_{j=1}^d\frac{(x_j-u_j)^{n_j-1}}{(n_j-1)!}\chi_{\alpha_j,x_j}^{b_j}(u_j)\ d\mu_{\mathbf b}(u_1,\dots,u_d),
\end{equation}
where $W\colon\mathbf I\to\mathbb R$ is a pseudo-polynomial of degree $(n_1-1,\dots,n_d-1)$ and measures $\mu_{\mathbf b}$ $(\mathbf b\in\mathbf B)$ satisfy conditions (ii) and (iii) of Theorem~\ref{th:boxmonint} $($here we use the standard convention that $0^0=1)$.
\end{Cor}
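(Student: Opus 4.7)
The plan is to adapt the proof of Theorem~\ref{th:mnrepres} by restricting the key decomposition to those indices $j$ for which $n_j\geq2$. Begin with the integral representation \eqref{eq:represtresbis} for $f$ provided by Theorem~\ref{th:bismnrepres}, which runs over all $\mathbf b\in\{L,r\}^d$. For every index $j$ with $n_j\geq 2$ the identity
\begin{equation*}
\frac{(x_j-u_j)^{n_j-1}}{(n_j-1)!}\chi^{b_j}_{\alpha_j,x_j}(u_j)=\frac{(x_j-u_j)^{n_j-1}}{(n_j-1)!}\chi^{r}_{\alpha_j,x_j}(u_j)+\frac{(x_j-u_j)^{n_j-1}}{(n_j-1)!}\mathbf 1_{\{\alpha_j\}}(u_j)\cdot\mathbf 1_{b_j=L}
\end{equation*}
holds, just as in the proof of Theorem~\ref{th:mnrepres}, because $(x_j-x_j)^{n_j-1}=0$ whenever $n_j\geq2$. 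At indices $j$ with $n_j=1$ the factor $\chi^{b_j}_{\alpha_j,x_j}(u_j)$ is left untouched.

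Substituting these identities into \eqref{eq:represtresbis} and expanding the product, each term indexed by $\mathbf b\in\{L,r\}^d$ becomes a sum of $2^{|\{j:\,n_j\geq2\}|}$ summands. The summand that selects $\chi^r$ at every $j$ with $n_j\geq2$ matches the form appearing in \eqref{eq:represbis100}, with the coordinates $(b_j)_{j:\,n_j=1}$ unchanged. Collecting such summands across all $\mathbf b\in\{L,r\}^d$ produces the required sum over $\mathbf b\in\mathbf B$, with new measures $\mu_{\mathbf b}$ ($\mathbf b\in\mathbf B$) obtained as finite sums of the original ones; conditions (ii) and (iii) of Theorem~\ref{th:boxmonint} are then inherited, since only indices $j$ with $n_j=1$ can have $b_j=L$ in $\mathbf B$, and for such $j$ the relevant marginal is already discrete in each original summand.

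Every remaining summand contains at least one factor of the form $\frac{(x_j-\alpha_j)^{n_j-1}}{(n_j-1)!}\mathbf 1_{\{\alpha_j\}}(u_j)$ with $n_j\geq2$. Inside the integral the dependence on $x_j$ collapses to the polynomial $(x_j-\alpha_j)^{n_j-1}$, which is of degree $n_j-1$, multiplied by an integral that does not depend on $x_j$; expanding this polynomial in powers of $x_j$ exhibits the whole summand as a pseudo-polynomial of degree $(n_1-1,\dots,n_d-1)$. Absorbing all such pseudo-polynomial contributions into $W$ yields the desired representation \eqref{eq:represbis100}, completing the implication ($\Rightarrow$). The converse ($\Leftarrow$) is immediate: any representation \eqref{eq:represbis100} is a special case of \eqref{eq:represtresbis} (setting $\mu_{\mathbf b}=0$ for $\mathbf b\notin\mathbf B$), so Theorem~\ref{th:bismnrepres} gives box-$\mathbf n$-convexity of $f$.

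The main technical point, rather than a conceptual obstacle, is the careful bookkeeping needed to verify that each leftover summand is indeed a pseudo-polynomial of the prescribed degree and that conditions (ii)--(iii) of Theorem~\ref{th:boxmonint} are preserved under the summation of measures; both reduce to the linearity of the operations involved and to the fact that discreteness of a marginal is preserved by finite sums.
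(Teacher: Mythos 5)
Your proposal is correct and follows exactly the route the paper intends: the paper gives no separate proof of this corollary, stating only that it follows ``using the same method as in the proof of Theorem~\ref{th:mnrepres},'' and your argument is precisely that method, with the splitting identity applied only at coordinates $j$ with $n_j\geq2$ and the leftover terms absorbed into the pseudo-polynomial $W$. Your explicit verification that conditions (ii)--(iii) of Theorem~\ref{th:boxmonint} survive the summation of measures is a detail the paper leaves implicit, and it is handled correctly.
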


\begin{Rem}
The representation \eqref{eq:represbis100} is a counterpart of the representation \eqref{eq:repres2} in the case when some (not necessarily all) of $n_i$'s are greater than $1$. In particular, Theorem~\ref{th:mnrepres} is a~special case of Corollary~\ref{cor:mnrepres}.
It can be shown that for given $f$ and $\boldsymbol\alpha$ the pseudo-polynomial $W$ and the measures
$\mu_{\mathbf b}$ ($\mathbf b\in\mathbf B$) are uniquely determined.
\end{Rem}

\begin{Rem}
In the above results, we focused on the box-$\mathbf n$-convex functions, in the case when $n_i\geq1$ for all $i=1,2,\dots,d$.
We can also consider the case when $n_i=0$ for some $i$.

Let $f\colon\mathbf I\to\mathbb R$ and $\mathbf n=(n_1,n_2,\dots,n_d)\in\mathbb N^d$ be such that $n_i\geq0$ for $i=1,2,\dots,d$.
Let $A=\{i=1,2,\dots,d:n_i\geq1\}$.
Then the function $f$ is box-$\mathbf n$-convex if and only if for every $\mathbf z\in\mathbf I_{A'}$ the function $f_A^{\mathbf z}$ is box-$\mathbf n_A$-convex.
Since all the coordinates of $\mathbf n_A$ are positive, the box-$\mathbf n_A$-convexity is characterized in Corollary~\ref{cor:mnrepres} and the preceding results.
\end{Rem}

Let $x_+=\max\{0,x  \}$ and $x_-=\max\{0,-x  \}$, then $x=x_+ - x_-$ $(x\in \R)$. It is not difficult to prove the following version of Theorem \ref{th:mnrepres}, which is a~$d$-dimensional counterpart of the well-known integral spline representation of $n$-convex functions in one dimension.

\begin{Thm}\label{th:repres_int_v3}
Let $I_1, I_2,\dots,I_d$ be open intervals (bounded or unbounded). Let $\mathbf n=(n_1,n_2,\dots,n_d)\in\mathbb N^d$ be such that $n_i\geq2$ for $i=1,2,\dots,d$.

Let $f\colon\mathbf I\to\mathbb R$ be a~function. Then $f$ is box-$\mathbf n$-convex, if and only if for every $\boldsymbol\alpha=(\alpha_1,\alpha_2,\dots,\alpha_d)\in\mathbf I$, there exist 
a~pseudo-polynomial $W_{\boldsymbol\alpha}$ of degree $(n_1-1,\dots,n_d-1)$ and 
a~Borel measure $\mu$ on $\mathbf I$, 
 which is finite on convex sets, 
such that $f$ is of the form
\begin{multline}\label{eq:repres_v10}
f(x_1,\dots,x_d)=W_{\boldsymbol\alpha}(x_1,\dots,x_d)\\
+\sum_{A\subset\{1,2,\ldots,d
  \}}\idotsint\limits_{\mathbf I_{A,\boldsymbol\alpha}}
  \prod_{j\in A'}\frac{(x_j-u_j)_+^{n_j-1}}{(n_j-1)!}\prod_{j\in A}(-1)^{n_j}\frac{(x_j-u_j)_-^{n_j-1}}{(n_j-1)!}\ d\mu(u_1,\dots,u_d),
\end{multline}
where $\mathbf I_{A,\boldsymbol\alpha}=\{(u_1,\ldots,u_d )\in \textbf{I}\colon u_j\leq\alpha _j \:if\: j\in A\: and\: u_j>\alpha _j\: if\: j \in A'  \}$.
\end{Thm}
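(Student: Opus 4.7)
The plan is to derive Theorem~\ref{th:repres_int_v3} from Theorem~\ref{th:mnrepres} by splitting the integration domain $\mathbf I$ into the $2^d$ pieces $\mathbf I_{A,\boldsymbol\alpha}$ and applying, on each piece, a pointwise identity that converts the ``$\chi^r$-kernel'' into a truncated-power function. The crucial identity, for $n\geq 2$ and $\alpha,u,x\in\mathbb R$, reads
\begin{equation*}
(x-u)^{n-1}\chi^r_{\alpha,x}(u)=\begin{cases}(x-u)_+^{n-1}&\text{if }u>\alpha,\\ (-1)^n(x-u)_-^{n-1}&\text{if }u\leq\alpha.\end{cases}
\end{equation*}
I would verify this by case analysis, using $(x-u)^{n-1}=(-1)^{n-1}(x-u)_-^{n-1}$ when $x<u$ and paying brief attention to the boundary $u=\alpha$: both sides vanish if $x\geq\alpha$, and both equal $(-1)^n(\alpha-x)^{n-1}$ if $x<\alpha$ (the latter because $\chi^r_{\alpha,x}(\alpha)=-1$ in this case).

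For the implication $(\Rightarrow)$ I would fix $\boldsymbol\alpha\in\mathbf I$ and invoke Theorem~\ref{th:mnrepres} to obtain a pseudo-polynomial $W$ of degree $(n_1-1,\dots,n_d-1)$ and a Borel measure $\mu$, finite on compact sets, for which \eqref{eq:repres2} holds. Because $\{\mathbf I_{A,\boldsymbol\alpha}\}_{A\subset\{1,\dots,d\}}$ is a Borel partition of $\mathbf I$, the integral \eqref{eq:repres2} decomposes as $\sum_A\idotsint_{\mathbf I_{A,\boldsymbol\alpha}}$; applying the pointwise identity coordinate-by-coordinate on each piece rewrites each factor $\frac{(x_j-u_j)^{n_j-1}}{(n_j-1)!}\chi^r_{\alpha_j,x_j}(u_j)$ as $\frac{(x_j-u_j)_+^{n_j-1}}{(n_j-1)!}$ for $j\in A'$ and as $(-1)^{n_j}\frac{(x_j-u_j)_-^{n_j-1}}{(n_j-1)!}$ for $j\in A$, yielding exactly \eqref{eq:repres_v10} with $W_{\boldsymbol\alpha}=W$ and the same measure $\mu$. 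Since each summand's integrand is supported inside the compact rectangle $\prod_j[\min(\alpha_j,x_j),\max(\alpha_j,x_j)]\subset\mathbf I$, finiteness of $\mu$ on bounded convex subsets of $\mathbf I$ follows at once from its finiteness on compact sets.

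The implication $(\Leftarrow)$ is obtained by running the same procedure in reverse: combining the summands in \eqref{eq:repres_v10} via the inverse identity and the same partition of $\mathbf I$ produces an expression of the form \eqref{eq:repres2}, so the $(\Leftarrow)$ direction of Theorem~\ref{th:mnrepres} yields box-$\mathbf n$-convexity of $f$. The only subtle point is the verification of the pointwise identity, specifically the sign bookkeeping at the boundary $u_j=\alpha_j$; the partitioning of $\mathbf I$ and the additivity of the integral are entirely routine.
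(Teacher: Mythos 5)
Your argument is correct and is exactly the route the paper intends: the paper offers no proof of Theorem~\ref{th:repres_int_v3}, merely asserting it is "not difficult" to obtain as a version of Theorem~\ref{th:mnrepres}, and your partition of $\mathbf I$ into the sets $\mathbf I_{A,\boldsymbol\alpha}$ together with the pointwise identity $(x-u)^{n-1}\chi^r_{\alpha,x}(u)=(x-u)_+^{n-1}$ for $u>\alpha$ and $=(-1)^n(x-u)_-^{n-1}$ for $u\leq\alpha$ (which you verify correctly, including the boundary $u=\alpha$) does precisely that, with the same $\mu$ and $W_{\boldsymbol\alpha}=W$. The one inaccuracy is your closing claim that finiteness of $\mu$ on bounded convex subsets of $\mathbf I$ "follows at once" from finiteness on compact sets: for an open box this is false (e.g.\ $d=1$, $I=(0,1)$, $f(x)=1/x$, $d\mu=2x^{-3}dx$ gives $\mu((0,1/2))=\infty$), so the theorem's phrase "finite on convex sets" can only be read as "finite on compact (convex) sets" --- a defect of the paper's statement rather than of your method, but one you should not paper over with an incorrect deduction.
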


\begin{Thm}
Let $I_1, I_2,\dots,I_d$ be open intervals (bounded or unbounded). Let $\mathbf n=(n_1,n_2,\dots,n_d)\in\mathbb N^d$ be such that $n_i\geq2$ for $i=1,2,\dots,d$.

Let $f\colon\mathbf I\to\mathbb R$ be a~function. Then $f$ is box-$\mathbf n$-convex, if and only if for all $\boldsymbol\alpha=(\alpha_1,\alpha_2,\dots,\alpha_d)\in\mathbf I$, and $\mathbf J^{\boldsymbol\alpha}=\mathbf I\cap\prod_{i=1}^d(\alpha_i,\infty)$, we have 
\begin{equation*}
f(x_1,\dots,x_d)=W_{\boldsymbol\alpha}(x_1,\dots,x_d)
+\idotsint\limits_{\mathbf J^{\boldsymbol\alpha}}\prod_{j=1}^d\frac{(x_j-u_j)_+^{n_j-1}}{(n_j-1)!}\ d\mu_{\boldsymbol\alpha}(u_1,\dots,u_d),
\end{equation*}
for $(x_1,\dots,x_d)\in\mathbf J^{\boldsymbol\alpha}$, where $W_{\boldsymbol\alpha}\colon\mathbf J^{\boldsymbol\alpha}\to\mathbb R$ is a~pseudo-polynomial of degree $(n_1-1,\dots,n_d-1)$, and $\mu_{\boldsymbol\alpha}$ is a~Borel measure on $\mathbf I$, which is finite on convex sets.
\end{Thm}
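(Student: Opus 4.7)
The plan is to reduce the theorem to Theorem~\ref{th:mnrepres} via the elementary identity that, for $\mathbf x\in\mathbf J^{\boldsymbol\alpha}$ (so $x_j>\alpha_j$ for each $j$), one has $\chi_{\alpha_j,x_j}^r(u_j)=\mathbf 1_{(\alpha_j,x_j]}(u_j)$, and hence
$$\prod_{j=1}^d\frac{(x_j-u_j)^{n_j-1}}{(n_j-1)!}\chi_{\alpha_j,x_j}^r(u_j)=\mathbf 1_{\mathbf J^{\boldsymbol\alpha}}(\mathbf u)\prod_{j=1}^d\frac{(x_j-u_j)_+^{n_j-1}}{(n_j-1)!}.$$
Thus, on the ``upper quadrant'' $\mathbf J^{\boldsymbol\alpha}$, the kernel of Theorem~\ref{th:mnrepres} coincides with the kernel of the present theorem once the measure is restricted to $\mathbf J^{\boldsymbol\alpha}$.

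For the implication ($\Rightarrow$), I would fix $\boldsymbol\alpha\in\mathbf I$ and apply Theorem~\ref{th:mnrepres} with this base point, obtaining a pseudo-polynomial $W$ of degree $(n_1-1,\dots,n_d-1)$ on $\mathbf I$ and a Borel measure $\mu$ on $\mathbf I$ that is finite on compact sets. Restricting $\mathbf x$ to $\mathbf J^{\boldsymbol\alpha}$ and using the identity above, the integrand vanishes outside $\mathbf J^{\boldsymbol\alpha}$, so I may replace $\mu$ by its restriction to $\mathbf J^{\boldsymbol\alpha}$ and rewrite the kernel as the truncated-power product. Setting $W_{\boldsymbol\alpha}:=W|_{\mathbf J^{\boldsymbol\alpha}}$ (still a pseudo-polynomial of the required degree on $\mathbf J^{\boldsymbol\alpha}$) and $\mu_{\boldsymbol\alpha}(A):=\mu(A\cap\mathbf J^{\boldsymbol\alpha})$ gives the required representation; finiteness of $\mu_{\boldsymbol\alpha}$ on bounded convex subsets of $\mathbf I$ is inherited from the local finiteness of $\mu$, since any such set has compact closure inside $\mathbf I$.

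For the implication ($\Leftarrow$), I first argue that $f|_{\mathbf J^{\boldsymbol\alpha}}$ is box-$\mathbf n$-convex for every $\boldsymbol\alpha$. The pseudo-polynomial $W_{\boldsymbol\alpha}$ contributes nothing, as it is box-$\mathbf n$-affine by Lemma~\ref{lem:pseudo3}. For each fixed $\mathbf u\in\mathbf J^{\boldsymbol\alpha}$, the kernel $\mathbf x\mapsto\prod_j(x_j-u_j)_+^{n_j-1}/(n_j-1)!$ is a tensor product of truncated-power functions; each factor is $(n_j-1)$-convex in the classical sense and hence box-$(n_j)$-convex (by Remark~\ref{rem:onedim}), and iterating Remark~\ref{rem:rem4}(a) shows that the product is box-$\mathbf n$-convex on $\mathbf I$. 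Since a multiple divided difference is a fixed finite linear combination of point evaluations, it commutes with the integral against the non-negative measure $\mu_{\boldsymbol\alpha}$, so every multiple divided difference of $f$ on $\mathbf J^{\boldsymbol\alpha}$ is non-negative. To promote this to all of $\mathbf I$, given pairwise distinct points $x_{i0},\dots,x_{in_i}\in I_i$ for $i=1,\dots,d$, I pick $\alpha_i\in I_i$ with $\alpha_i<\min_j x_{ij}$ (possible by openness of $I_i$), which places the entire grid $(x_{1j_1},\dots,x_{dj_d})$ inside $\mathbf J^{\boldsymbol\alpha}$. The main obstacle is only bookkeeping: keeping track of the domains on which each object lives, and justifying that ``finite on convex sets'' is compatible with the local finiteness coming out of Theorem~\ref{th:mnrepres}, which it is because the relevant integration region $\prod_j(\alpha_j,x_j]$ is always a bounded rectangle with compact closure in $\mathbf I$.
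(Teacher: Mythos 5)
Your proposal is correct and follows essentially the same route as the paper: both directions reduce to Theorem~\ref{th:mnrepres} via the observation that on $\mathbf J^{\boldsymbol\alpha}$ the kernel $\prod_j\frac{(x_j-u_j)^{n_j-1}}{(n_j-1)!}\chi_{\alpha_j,x_j}^r(u_j)$ coincides with the truncated-power kernel supported in $\mathbf J^{\boldsymbol\alpha}$, and box-$\mathbf n$-convexity on all of $\mathbf I$ is recovered by choosing $\alpha_i$ below all grid points (your ($\Leftarrow$) verifies the convexity of the kernel directly instead of citing the converse of Theorem~\ref{th:mnrepres}, but this is the same content). One small inaccuracy: a bounded convex subset of $\mathbf I$ need not have compact closure inside $\mathbf I$, though the specific rectangles $\prod_j(\alpha_j,x_j]$ you actually use do, so the argument stands.
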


\begin{proof}
The implication ($\Rightarrow$) is an immediate consequence of Theorem~\ref{th:mnrepres} (we take $W_{\boldsymbol\alpha}=W|_{\mathbf J^{\boldsymbol\alpha}}$ and $\mu_{\boldsymbol\alpha}=\mu$).

We show the implication ($\Leftarrow$).
By Theorem~\ref{th:mnrepres},
the function
\begin{equation*}
g(x_1,x_2,\dots,x_d)=\idotsint\limits_{\mathbf I}\prod_{j=1}^d\frac{(x_j-u_j)^{n_j-1}}{(n_j-1)!}\chi_{\alpha_j,x_j}^r(u_j)\ d\mu_{\boldsymbol\alpha}(u_1,\dots,u_d)
\end{equation*}
is box-$\mathbf n$-convex on $\mathbf I$. As a consequence, it is also box-$\mathbf n$-convex on $\mathbf J^{\boldsymbol\alpha}$. Conse\-quen\-tly, the function $f|_{\mathbf J^{\boldsymbol\alpha}}=W_{\boldsymbol\alpha}+g$ is box-$\mathbf n$-convex.

Since for every $\boldsymbol\alpha\in\mathbf I$, $f$ is box-$\mathbf n$-convex on $\mathbf J^{\boldsymbol\alpha}$, we obtain that $f$ is box-$\mathbf n$-convex on $\mathbf I$.
\end{proof}


\section {Box-$\mathbf n$-convex orders.}
By the analogy to the $n$-convex orders and  the box-$(m,n)$-convex orders \cite{DLS 1998,KomRaj2023,Shaked2007}, we will define the box-$\mathbf n$-convex orders.

First, let us recall the definition of the $n$-convex order.
\begin{defin} 
Let $X$ and $Y$ be two random variables that take on values in the interval $I\subset \R$. Then $X$ is said to be smaller than $Y$ in the $n$-convex (respectively, $n$-concave) order, denoted by $X\leq _{n\mhyphen  cx}Y $ ($X\leq _{n\mhyphen  cv}Y $), if
$$
\E f(X) \leq \E f(Y) 
$$
for all $n$-convex ($n$-concave) functions $f:I \to\mathbb R$, for which the expectations exist.
\end{defin}
Many properties of the $n$-convex order can be found in \cite{DLS 1998}. 
\begin{Thm}\label{th:theorem100}
Let $X$ and $Y$ be two $I$-valued  random variables such that 
$\E |X|^n<\infty $ and $\E |Y|^n<\infty $. Then $X\leq _{n\mhyphen  cx}Y $ if, and only if, 
\begin{itemize}
\item[{\rm\textbf{a)}}]
$\E X^k= \E Y^k$, $k=1, \ldots, n$,
\item[{\rm\textbf{b)}}]
$\E (X-t)^n_+\leq \E (Y-t)^n_+$ for all $t\in I$.
\end {itemize}
\end{Thm}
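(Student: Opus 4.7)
The plan is to prove both directions using the one-dimensional spline integral representation of $n$-convex functions together with the pointwise identity $(x-u)^n=(x-u)_+^n+(-1)^n(x-u)_-^n$ that links truncated powers to the ordinary monomial.

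For the necessity ($\Rightarrow$), I would begin by noting that for every $k=0,1,\dots,n$ the monomial $x^k$ has vanishing divided differences of order $n+1$, hence is simultaneously $n$-convex and $n$-concave. Applying the defining inequality of $\leq_{n\mhyphen cx}$ in both directions yields at once $\E X^k=\E Y^k$, establishing~\textbf{a)}. For~\textbf{b)} I would fix $t\in I$ and verify (by a direct computation of divided differences of order $n+1$, or equivalently from the fact that the $(n+1)$st distributional derivative of $x\mapsto(x-t)_+^n$ equals $n!\,\delta_t$) that this function is $n$-convex. The hypothesis $\E|X|^n,\E|Y|^n<\infty$ guarantees integrability, and the order then produces~\textbf{b)}.

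For the sufficiency ($\Leftarrow$), I would fix $\alpha\in I$ and invoke the one-dimensional specialization of Theorem~\ref{th:repres_int_v3} (with $d=1$ and $n_1=n+1\geq 2$). It furnishes a polynomial $P$ of degree at most $n$ and a non-negative Borel measure $\mu$ on $I$, finite on compact sets, such that
$$f(x)=P(x)+\int_{I\cap(\alpha,\infty)}\frac{(x-u)_+^n}{n!}\,d\mu(u)+\int_{I\cap(-\infty,\alpha]}(-1)^{n+1}\frac{(x-u)_-^n}{n!}\,d\mu(u).$$
Taking $\E f(Y)-\E f(X)$: by~\textbf{a)} the polynomial contribution cancels; the first integral is non-negative by~\textbf{b)} applied term-by-term; and for the second integral I would apply~\textbf{a)} to the polynomial $(x-u)^n$ (of degree $n$ in $x$, for each fixed $u$) together with the identity above to deduce $\E(X-u)^n=\E(Y-u)^n$, which yields
$$(-1)^{n+1}\bigl[\E(Y-u)_-^n-\E(X-u)_-^n\bigr]=\E(Y-u)_+^n-\E(X-u)_+^n\geq 0$$
by~\textbf{b)}. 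Integrating this against $d\mu$ completes the argument.

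The main obstacle is securing the representation with a genuinely non-negative measure $\mu$ rather than a signed one; this is precisely the content of the ``only if'' part of Theorem~\ref{th:repres_int_v3} in dimension one. Everything else reduces to standard bookkeeping with expectations, using the polynomial/truncated-power identity to trade the ``left-sided'' contribution against the ``right-sided'' one modulo the moment conditions of~\textbf{a)}.
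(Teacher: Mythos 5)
The paper does not prove this theorem at all: it is stated as a known characterization of the $n$-convex order with a pointer to Denuit, Lef\`evre and Shaked \cite{DLS 1998}, so there is no in-paper argument to compare yours against. Judged on its own, your proof is sound and follows the standard route. Necessity is exactly right: $x^k$ for $k\le n$ is $n$-affine, and $(\cdot-t)_+^n$ is $n$-convex (its $(n+1)$st divided differences are values of a nonnegative B-spline), with integrability supplied by the moment hypotheses. For sufficiency, the $d=1$, $n_1=n+1$ case of Theorem~\ref{th:repres_int_v3} (equivalently the classical spline representation, recalling from Remark~\ref{rem:onedim} that classical $n$-convexity is box-$(n+1)$-convexity) does furnish a genuinely nonnegative $\mu$, and your use of $(x-u)^n=(x-u)_+^n+(-1)^n(x-u)_-^n$ together with a) to trade the left-tail integrand for the right-tail one is precisely the mechanism the paper itself encodes in Lemma~\ref{lem:lem102} and Theorem~\ref{th:theorem101}, run in the opposite direction. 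The one step deserving an explicit sentence is the interchange of $\E$ with $\int d\mu$: Tonelli applies to each of the two integrals because each integrand has constant sign, but for even $n$ the two pieces have opposite signs, so finiteness of $\E f(X)$ does not by itself guarantee that each piece is finite; one should either truncate $f$ (work on compact subintervals and pass to the limit) or observe that b) gives the desired inequality for each piece even when both sides are $+\infty$, and that the order is only asserted ``provided the expectations exist.'' This is routine bookkeeping rather than a gap in the idea, but it is the only place where your sketch is thinner than a complete proof would need to be.
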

\begin{Thm}\label{th:theorem101}
Let $X$ and $Y$ be two $I$-valued  random variables such that 
$\E |X|^n<\infty $ and $\E |Y|^n<\infty $. Then $X\leq _{n\mhyphen  cx}Y $ if, and only if, 
\begin{itemize}
\item[{\rm\textbf{a)}}]
$\E X^k= \E Y^k$, $k=1, \ldots, n$,
\item[{\rm\textbf{b')}}]
$\E (-1)^{n+1}(X-t)^n_-\leq \E (-1)^{n+1}(Y-t)^n_-$ for all $t\in I$.
\end {itemize}
\end{Thm}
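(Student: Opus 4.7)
The plan is to derive Theorem~\ref{th:theorem101} directly from Theorem~\ref{th:theorem100} by showing that, under the moment-matching condition (a), the inequality (b) is equivalent to (b'). The key algebraic identity is
$$ (x-t)^n = (x-t)_+^n + (-1)^n (x-t)_-^n \qquad (x,t\in\mathbb R), $$
which holds because exactly one of the summands on the right is nonzero for any given $x\neq t$, and both vanish at $x=t$. The finiteness of $\E|X|^n$ and $\E|Y|^n$ together with $|x-t|^n\leq 2^{n-1}(|x|^n+|t|^n)$ guarantees that all the expectations $\E(X-t)_\pm^n$, $\E(Y-t)_\pm^n$ and $\E(X-t)^n$, $\E(Y-t)^n$ are finite for every $t\in I$.

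First, I would assume (a) and expand
$$ (X-t)^n = \sum_{k=0}^{n}\binom{n}{k}(-t)^{n-k}X^k, $$
so that
$$ \E(X-t)^n = \sum_{k=0}^{n}\binom{n}{k}(-t)^{n-k}\E X^k. $$
The same formula holds for $Y$, and (a) (together with the trivial $\E X^0=\E Y^0=1$) yields $\E(X-t)^n = \E(Y-t)^n$ for every $t\in I$. Taking expectations in the pointwise identity above and subtracting, I would then obtain
$$ \E(X-t)_+^n - \E(Y-t)_+^n = -(-1)^n\bigl[\E(X-t)_-^n - \E(Y-t)_-^n\bigr] = (-1)^{n+1}\bigl[\E(X-t)_-^n - \E(Y-t)_-^n\bigr]. $$
Hence, under (a), condition (b) of Theorem~\ref{th:theorem100} and condition (b') of Theorem~\ref{th:theorem101} are logically equivalent for every $t\in I$.

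For the implication $(\Rightarrow)$ in Theorem~\ref{th:theorem101}, I would invoke Theorem~\ref{th:theorem100}: $X\leq_{n\mhyphen cx}Y$ gives (a) and (b), and the equivalence above turns (b) into (b'). For the converse $(\Leftarrow)$, I would start from (a) and (b'), use the equivalence to deduce (b), and apply Theorem~\ref{th:theorem100} again to conclude $X\leq_{n\mhyphen cx}Y$.

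The proof is essentially a one-line algebraic reduction to Theorem~\ref{th:theorem100}; there is no real obstacle. The only thing worth being careful about is to keep the integrability condition $\E|X|^n,\E|Y|^n<\infty$ in use when rewriting $\E(X-t)^n$ via moments, so that all rearrangements of the integrals are justified without any further regularity assumption on $X,Y$.
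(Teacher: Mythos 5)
Your proposal is correct and follows exactly the route the paper intends: it notes that Theorem~\ref{th:theorem101} follows from Theorem~\ref{th:theorem100} and the identity of Lemma~\ref{lem:lem102} (which is precisely your decomposition $(x-t)^n=(x-t)_+^n+(-1)^n(x-t)_-^n$), and your binomial-expansion step showing $\E(X-t)^n=\E(Y-t)^n$ under (a) is the implicit detail the paper leaves to the reader. Nothing is missing.
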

It is not difficult to prove the following lemma.
\begin{Lem}\label{lem:lem102}
Let $x,u\in\R$, $n\in\N$, then $(x-u)^n_+ = (x-u)^n + (-1)^{n+1}(x-u)^n_-$.
\end{Lem}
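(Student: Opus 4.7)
The plan is a direct verification by splitting into two cases according to the sign of $x-u$. This is essentially a one line check, but the point is to phrase it so the two cases collapse into a single identity.

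First I would fix $x,u\in\mathbb R$ and consider the case $x\geq u$. Then $(x-u)_+=x-u$ and $(x-u)_-=0$, so the left-hand side equals $(x-u)^n$ and the right-hand side equals $(x-u)^n+(-1)^{n+1}\cdot 0=(x-u)^n$. Hence the identity holds trivially in this case.

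Next I would treat the case $x<u$. Then $(x-u)_+=0$ and $(x-u)_-=u-x>0$, so the left-hand side vanishes. For the right-hand side, I would use the elementary identity $(u-x)^n=(-1)^n(x-u)^n$ to rewrite
\begin{equation*}
(x-u)^n+(-1)^{n+1}(x-u)^n_-=(x-u)^n+(-1)^{n+1}(u-x)^n=(x-u)^n+(-1)^{n+1}(-1)^n(x-u)^n=(x-u)^n-(x-u)^n=0,
\end{equation*}
which matches the left-hand side.

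There is no real obstacle here; the only subtlety is to keep track of the signs in the second case, where the alternating factor $(-1)^{n+1}$ is exactly what is needed to compensate for the parity of $(u-x)^n$ versus $(x-u)^n$. Since the identity $x=x_+-x_-$ has already been recalled just before the lemma, no further preparation is required, and the two cases together exhaust all possibilities for $x,u\in\mathbb R$.
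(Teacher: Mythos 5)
Your case analysis is correct and is exactly the intended argument; the paper itself omits the proof (remarking only that it is "not difficult"), so there is nothing more to compare against. The only caveat worth recording is that since the paper takes $\N=\{0,1,2,\dots\}$ and elsewhere uses the convention $0^0=1$, your claim that the left-hand side "vanishes" when $x<u$ (and that $(x-u)^n_-=0$ when $x\geq u$) implicitly assumes $n\geq1$, which is the only case in which the identity is actually used.
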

Note, that Theorem \ref{th:theorem101} easily follows from Theorem \ref{th:theorem100} and Lemma \ref{lem:lem102}.

By analogy to the $n$-convex order for random variables, one can define the $n$-convex order for the signed measures $\gamma_1$, $\gamma_2$ on $I$, which have a finite variation: $\gamma_1 \leq _{n\mhyphen  cx} \gamma_2$ if $\int _I f(x)d\gamma_1(x)\leq\int _I f(x)d\gamma_2(x)$ for all for all  $n$-convex functions $f\colon I \to\mathbb R$, provided the integrals exist. Similarly one can define the $n$-concave order $(\leq _{n\mhyphen  cv})$ for signed measures. 

By Theorem \ref{th:theorem100} (cf \cite{Rajba2017}), it follows the following characterization of $n$-convex orders for the signed measures.
\begin{Thm}\label{th:theorem103}
Let $\gamma$ be a signed measure on $I$, which has a finite variation and such that $\int _I|x|^n d |\gamma|(x) <\infty$.
Then $\gamma\geq _{n\mhyphen  cx}0 $ if, and only if, 
\begin{itemize}
\item[{\rm\textbf{a)}}]
$\int_I x^k d \gamma(x)=0$, $k=1, \ldots, n$, 
\vspace{1ex}
\item[{\rm\textbf{b)}}]
$\int_I (x-u)^n_+d\gamma(x)\geq 0$ for all $u\in I$.
\end {itemize}
\end{Thm}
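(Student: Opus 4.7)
The plan is to mimic the proof of Theorem~\ref{th:theorem100} at the level of signed measures, leaning on the one-dimensional instance of the integral representation from Theorem~\ref{th:repres_int_v3}.

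First I will dispatch the easy implication ($\Rightarrow$). For each $k\in\{0,1,\dots,n\}$, the monomial $x^k$ is a polynomial of degree at most $n$, so its $(n+1)$-th derivative vanishes and $\pm x^k$ are both $n$-convex. Applying the hypothesis $\int_I f\,d\gamma\geq 0$ to these choices yields (a); the $k=0$ case additionally forces $\gamma(I)=0$, so the stated range $k=1,\dots,n$ must tacitly include $k=0$ for the theorem to make sense. For fixed $u\in I$, the function $x\mapsto(x-u)^n_+$ is $n$-convex because its $n$-th derivative is the non-decreasing step function $n!\cdot\mathbf{1}_{[u,\infty)}$; testing against it delivers (b).

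For the reverse implication ($\Leftarrow$), let $f\colon I\to\mathbb R$ be $n$-convex. By Remark~\ref{rem:onedim}, $f$ is box-$(n+1)$-convex in the sense of this paper, so the $d=1$ case of Theorem~\ref{th:repres_int_v3}, applied for some fixed $\alpha\in I$, yields a polynomial $W_\alpha$ of degree $\leq n$ and a non-negative Borel measure $\mu$ on $I$, finite on compact sets, such that $f-W_\alpha$ decomposes as an integral of $(x-u)^n_+/n!$ over $\{u>\alpha\}\cap I$ plus an integral of $(-1)^{n+1}(x-u)^n_-/n!$ over $\{u\leq\alpha\}\cap I$, both against $\mu$. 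Applying Lemma~\ref{lem:lem102} in the form $(-1)^{n+1}(x-u)^n_-=(x-u)^n_+-(x-u)^n$ to rewrite the second integral, and absorbing the resulting $\int(x-u)^n\,d\mu$ (a polynomial in $x$ of degree $\leq n$) into the $W_\alpha$ part, I reach the clean representation
\begin{equation*}
f(x)=P(x)+\int_I\frac{(x-u)^n_+}{n!}\,d\mu(u),
\end{equation*}
with $P$ a polynomial of degree $\leq n$. Integrating against $\gamma$ and swapping the order gives
\begin{equation*}
\int_I f\,d\gamma=\int_I P\,d\gamma+\frac{1}{n!}\int_I\left(\int_I(x-u)^n_+\,d\gamma(x)\right)d\mu(u);
\end{equation*}
the first summand vanishes by (a) applied monomial by monomial, while the second is non-negative by (b).

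The main obstacle will be the Fubini exchange above, which does not come for free since $\mu$ is only locally finite and $I$ may be unbounded. The argument will hinge on the moment hypothesis $\int_I|x|^n\,d|\gamma|(x)<\infty$, which yields a polynomial-in-$u$ bound for $u\mapsto\int_I(x-u)^n_+\,d|\gamma|(x)$, combined with the growth of $\mu$ already constrained by the $|\gamma|$-integrability of $f$ (implicit in the definition of $\gamma\geq_{n\mhyphen cx}0$). A truncation to compact sub-intervals of $I$ followed by monotone/dominated convergence should then complete the justification of the interchange.
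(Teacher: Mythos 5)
The paper offers no proof of this statement: it is presented as an immediate consequence of Theorem~\ref{th:theorem100} with a pointer to \cite{Rajba2017}, so you are supplying an argument the authors omit. Your ($\Rightarrow$) direction is correct, including the observation that condition (a) must be read as containing $k=0$ (for a general signed measure $\gamma(I)=0$ is not automatic, unlike the difference of two probability laws in Theorem~\ref{th:theorem100}).

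The ($\Leftarrow$) direction has a genuine gap: the ``clean representation'' $f(x)=P(x)+\int_I\frac{(x-u)^n_+}{n!}\,d\mu(u)$ does not exist for a general $n$-convex $f$. In the two-sided representation of Theorem~\ref{th:repres_int_v3} the integrand $(x-u)^n_-$ vanishes unless $x<u\le\alpha$, so that integral only sees $\mu$ on the compact interval $[x,\alpha]\subset I$; after your rewriting via Lemma~\ref{lem:lem102}, the terms $\int_{\{u\le\alpha\}}(x-u)^n_+\,d\mu(u)$ and $\int_{\{u\le\alpha\}}(x-u)^n\,d\mu(u)$ integrate over all of $\{u\le\alpha\}\cap I$, where $\mu$ is only locally finite. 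Both can equal $+\infty$, so the split is an $\infty-\infty$ and the ``absorbed'' term need not be a polynomial, or even finite. Concretely, for $I=\mathbb R$, $n=1$, $f(x)=e^{-x}$ one has $d\mu(u)=e^{-u}\,du$ and $\int_{-\infty}^{x}(x-u)e^{-u}\,du=+\infty$, so no representation of the asserted form exists. The repair is to skip the absorption: first derive from (a) and (b), via Lemma~\ref{lem:lem102} exactly as in Remark~\ref{rem:rem104}, the companion inequality $\int_I(-1)^{n+1}(x-u)^n_-\,d\gamma(x)\ge0$ for all $u\in I$; then apply Tonelli separately to the two kernels $(x-u)^n_+\mathbf 1_{\{u>\alpha\}}$ and $(-1)^{n+1}(x-u)^n_-\mathbf 1_{\{u\le\alpha\}}$. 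Each has constant sign and, for fixed $x$, integrates in $u$ to a function supported in $\{x>\alpha\}$, respectively $\{x<\alpha\}$, where it equals $n!\,(f-W_\alpha)(x)$; hence each double integral against $|\gamma|\otimes\mu$ is absolutely convergent whenever $\int_I|f|\,d|\gamma|<\infty$ (the moment hypothesis makes $W_\alpha$ itself $|\gamma|$-integrable), the interchange is legitimate, and (b) together with its companion gives non-negativity of both terms. With that modification your argument closes.
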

\begin{Rem}\label{rem:rem104}
By Theorem \ref{th:theorem101}, it follows, that condition (b) in Theorem \ref{th:theorem103}, can be replaced by 
\begin{itemize}
\item[{\rm\textbf{b')}}]
$\int_I (-1)^{n+1}(x-u)^n_-d\gamma(x)\geq 0$ for all $u\in I$.
\end {itemize}
\end {Rem}
\begin{Rem}\label{rem:rem105}
Similarly, one can give characterizations of $n$-concave orders if conditions (b) and (b') (in Theorems \ref{th:theorem100}, \ref{th:theorem101}, \ref{th:theorem103} and Remark \ref{rem:rem104}) hold with the sign reversed.
\end {Rem}

By analogy to the $n$-convex orders and  the box-$(m,n)$-convex orders, we define the box-$\mathbf n$-convex orders.

\begin{defin}
Let $(X_1, \ldots, X_d)$ and $(Y_1, \ldots, Y_d)$ be two $\mathbf{I}$-valued random vectors. Then $(X_1, \ldots, X_d)$ is said to be smaller then  $(Y_1, \ldots, Y_d)$ in the box-$\mathbf{n}$-convex order, denoted by $(X_1, \ldots, X_d)\prec_{box\mhyphen \mathbf n\mhyphen cx}(Y_1, \ldots, Y_d)$, if 
\begin{equation}\label{eq:eq100}
 \E f(X_1, \ldots, X_d)\leq \E f(Y_1, \ldots, Y_d)  
\end {equation}
for all continuous box-$\mathbf n$-convex functions $f\colon \mathbf{I} \to\mathbb R$, provided the expectations exist. 
\end{defin}
\begin{Thm}\label{th:theorem106}
Let $(X_1, \ldots, X_d)$ and $(Y_1, \ldots, Y_d)$ be two $\mathbf{I}$-valued random vec\-tors such that $\E |X_1|^{n_1-1}\ldots |X_d|^{n_d-1}<\infty$ and $\E |Y_1|^{n_1-1}\ldots |Y_d|^{n_d-1}<\infty$. Then
\begin{equation}\label{eq:eq100a}
(X_1, \ldots, X_d)\prec_{box\mhyphen \mathbf n\mhyphen cx}(Y_1, \ldots, Y_d)
\end{equation}
if, and only if, the following conditions are satisfied
\begin{itemize}
\item[{\rm\textbf{a)}}]
\begin{equation}\label{eq:eq101}
\E W(X_1, \ldots, X_d)=\E W(Y_1, \ldots, Y_d)
\end{equation}
for all continuous pseudo-polynomials $W$ of degree $(n_1-1, \ldots n_d-1)$, provided the expectations exist,
\item[{\rm\textbf{b)}}]
\begin{equation}\label{eq:eq102}
\E \prod_{j\in A'}\frac{(X_j-u_j)_+^{n_j-1}}{(n_j-1)!}\prod_{j\in A}(-1)^{n_j}\frac{(X_j-u_j)_-^{n_j-1}}{(n_j-1)!}
\leq \E \prod_{j\in A'}\frac{(Y_j-u_j)_+^{n_j-1}}{(n_j-1)!}\prod_{j\in A}(-1)^{n_j}\frac{(Y_j-u_j)_-^{n_j-1}}{(n_j-1)!}
\end{equation}
for all $A\subset\{1,2,\ldots,d\}$ and $(u_1, \ldots, u_d)\in \mathbf I$.

\end {itemize}
\end{Thm}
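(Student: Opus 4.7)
The plan is to exploit the integral representation from Theorem~\ref{th:repres_int_v3} (supplemented by Corollary~\ref{cor:mnrepres} when some $n_j=1$) to reduce the order relation \eqref{eq:eq100a} to the two explicit conditions \eqref{eq:eq101} and \eqref{eq:eq102}, which involve only the ``building blocks'' of box-$\mathbf n$-convex functions.

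For the implication $(\Leftarrow)$, I fix a continuous box-$\mathbf n$-convex $f\colon\mathbf I\to\mathbb R$ with both $\E f(X_1,\ldots,X_d)$ and $\E f(Y_1,\ldots,Y_d)$ finite, choose any $\boldsymbol\alpha\in\mathbf I$, and write $f$ via \eqref{eq:repres_v10} as the sum of a~continuous pseudo-polynomial $W_{\boldsymbol\alpha}$ of degree $(n_1-1,\ldots,n_d-1)$ and the non-negative superposition
$$\sum_{A\subset\{1,\ldots,d\}}\idotsint\limits_{\mathbf I_{A,\boldsymbol\alpha}} g_{A,\mathbf u}(\mathbf x)\, d\mu(\mathbf u),$$
where $g_{A,\mathbf u}(\mathbf x)$ is precisely the product on the left-hand side of \eqref{eq:eq102}. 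Condition (a) kills the contribution of $W_{\boldsymbol\alpha}$ to $\E f(Y)-\E f(X)$. The moment hypothesis bounds $|g_{A,\mathbf u}(\mathbf x)|$ by a~polynomial in $|x_1|,\ldots,|x_d|$ of coordinatewise degree at most $n_j-1$, which is uniformly $L^1$ with respect to both $P_X$ and $P_Y$; this justifies Tonelli's theorem to interchange $\mu$-integration with expectation. Condition (b) then furnishes the $\mu$-a.e.\ pointwise inequality, and integrating against the non-negative measure $\mu$ delivers $\E f(X)\leq\E f(Y)$.

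For the implication $(\Rightarrow)$, condition (a) follows from Theorem~\ref{thm:pseudo}: every continuous pseudo-polynomial $W$ of degree $(n_1-1,\ldots,n_d-1)$ is box-$\mathbf n$-affine, hence simultaneously box-$\mathbf n$-convex and box-$\mathbf n$-concave, so applying \eqref{eq:eq100} to $\pm W$ forces the equality \eqref{eq:eq101}. For condition (b) I test \eqref{eq:eq100} on the functions $g_{A,\mathbf u}$ themselves: each is a tensor product of the one-variable splines $(\cdot-u_j)_+^{n_j-1}$ and $(-1)^{n_j}(\cdot-u_j)_-^{n_j-1}$, which are $(n_j-1)$-convex on $I_j$, so $g_{A,\mathbf u}$ is box-$\mathbf n$-convex by an iterated application of Remark~\ref{rem:rem4}(a), and continuous whenever every $n_j\geq2$.

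The principal technical obstacle is the degenerate case in which some $n_j$ equals $1$. Then the corresponding factor in $g_{A,\mathbf u}$ collapses (via the convention $0^0=1$) to a discontinuous indicator, so $g_{A,\mathbf u}$ is no longer continuous and cannot be fed directly into \eqref{eq:eq100}. I plan to circumvent this by approximating each offending indicator coordinate-wise by a~monotone sequence of continuous, non-decreasing, piecewise-linear ramps, forming the associated tensor products (still box-$\mathbf n$-convex by Remark~\ref{rem:rem4}(a) and now continuous), applying \eqref{eq:eq100} to each approximant, and passing to the limit by dominated convergence with the moment hypothesis as envelope. A parallel refinement, replacing \eqref{eq:repres_v10} by the finer decomposition of Corollary~\ref{cor:mnrepres}, will be used on the $(\Leftarrow)$ side when some $n_j=1$, where the single-spline representation of Theorem~\ref{th:repres_int_v3} does not capture the left-continuous jump part of $f$.
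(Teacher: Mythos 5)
Your proposal is correct and follows essentially the same route as the paper: both directions rest on the spline representation of Theorem~\ref{th:repres_int_v3}, with part (a) obtained by testing \eqref{eq:eq100} on $\pm W$ (box-$\mathbf n$-affine by Lemma~\ref{lem:pseudo3}) and part (b) by testing on the functions $g_{A,\mathbf u}$, which the paper recognizes as box-$\mathbf n$-convex via \eqref{eq:repres_v10} with $\mu=\delta_{\mathbf u}$ and you via the tensor-product Remark~\ref{rem:rem4}(a). Your extra care about the case $n_j=1$ is a reasonable refinement, but note the paper implicitly works under the hypothesis $n_i\geq2$ inherited from Theorem~\ref{th:repres_int_v3} and does not treat that degenerate case at all.
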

\begin{proof}
We show the implication ($\Rightarrow$). Let $W$ be a continuous pseudo-polynomial of degree $(n_1-1, \ldots n_d-1)$. By Lemma \ref{lem:pseudo3}, $W$ is box-$\mathbf n$-affine. Then taking in \eqref{eq:eq100} $f=W$ and next $f=-W$, we obtain \eqref{eq:eq101}.

Let $A\subset\{1,2,\ldots,d\}$ and $\mathbf{u}=(u_1, \ldots, u_d)\in \mathbf{I}$ be fixed. Then there exists $(\alpha_1, \ldots, \alpha_d)$ $\in \mathbf{I}$ such that $\alpha_j<u_j$ if $j\in A'$ and $\alpha_j>u_j$ if $j\in A$, consequently $\mathbf{u}\in\mathbf I_{A,(\alpha_1, \ldots, \alpha_d)}$. Let $f_{A,\mathbf{u}}\colon\mathbf I\to\mathbb R$ be the function given by the formula
\begin{equation*}
f_{A,\mathbf{u}}(x_1, \ldots, x_d)=
\prod_{j\in A'}\frac{(x_j-u_j)_+^{n_j-1}}{(n_j-1)!}\prod_{j\in A}(-1)^{n_j}\frac{(x_j-u_j)_-^{n_j-1}}{(n_j-1)!}
\end{equation*}
Then, by Theorem \ref{th:repres_int_v3}, the function $f_{A,\mathbf{u}}$ is box-$\mathbf n$-convex and it is of the form \eqref{eq:repres_v10} with $W_{\boldsymbol\alpha}=0$ and $\mu=\delta_{(u_1, \ldots, u_d)}$. Then by  \eqref{eq:eq100}, taking $f=f_{A,\mathbf{u}}$, we obtain \eqref{eq:eq102}.

We pass to the proof of the implication ($\Leftarrow$). Let $f\colon\mathbf I\to\mathbb R$ be a continuous box-$\mathbf n$-convex function.
Let $\boldsymbol\alpha=(\alpha_1,\alpha_2,\dots,\alpha_d)\in\mathbf I$.
Then, by Theorem \ref{th:repres_int_v3}, the function $f$ is of the form \eqref{eq:repres_v10}. Then
\begin{multline}\label{eq:repres_103}
\E f(X_1,\dots,X_d)=\E W_{\boldsymbol\alpha}(X_1,\dots,X_d)\\
+\sum_{A\subset\{1,2,\ldots,d\}}
	\idotsint\limits_{\mathbf {I_{A,\alpha}}}
	\E \prod_{j\in A'}\frac{(X_j-u_j)_+^{n_j-1}}{(n_j-1)!}\prod_{j\in A}(-1)^{n_j}\frac{(X_j-u_j)_-^{n_j-1}}{(n_j-1)!}\ d\mu(u_1,\dots,u_d).
\end{multline}
Inequality \eqref{eq:eq100} now follows from \eqref{eq:eq101}, \eqref{eq:eq102}, using identity \eqref{eq:repres_103}, and a similar identity involving $(Y_1,\dots,Y_d)$. Thus inequality \eqref{eq:eq100a} is satisfied. 

\end{proof}
By analogy to box-$\mathbf{n}$-convex orders for random vectors, we define the box-$\mathbf{n}$-convex order for signed measures $\gamma_1$, $\gamma_2$ on $\mathbf I$, which have a finite variation:  $\gamma_1\prec_{box\mhyphen \mathbf n\mhyphen cx}\gamma_2$ if

$$
\idotsint\limits_\mathbf{I} f(x_1,\dots,x_d)d\gamma_1(x_1,\dots,x_d)\leq
\idotsint\limits_\mathbf{I}  f(x_1,\dots,x_d)d\gamma_2(x_1,\dots,x_d)
$$
for all continuous box-$\mathbf n$-convex functions $f\colon \mathbf{I }\to\mathbb R$, provided the integrals exist. 
From Theorem \ref{th:theorem106}, it follows the following characterization of box-$\mathbf{n}$-convex order for signed measures.
\begin{Thm}\label{th:theorem107}
Let $\gamma$ be a signed measure on $\mathbf{I}$ which has a finite variation and such that\\$\idotsint\limits_\mathbf{I} |x_1|^{n_1-1},\dots,|x_d|^{n_d-1}d|\gamma|(x_1,\dots,x_d)<\infty$. Then
$$
\idotsint\limits_\mathbf{I} f(x_1,\dots,x_d)\:d\gamma(x_1,\dots,x_d)\geq 0
$$
for all continuous box-$\mathbf n$-convex functions $f\colon \mathbf{I }\to\mathbb R$ if, and only if,
\begin{itemize}
\item[{\rm\textbf{a)}}]
\begin{equation}\label{eq:eqth107a}
\idotsint\limits_\mathbf{I} W(x_1,\dots,x_d)\:d\gamma(x_1,\dots,x_d)=0
\end{equation}
for all continuous pseudo-polynomials $W$ of order $(n_1-1, \ldots n_d-1)$, provided the integral exists,
\item[{\rm\textbf{b)}}]

\begin{equation}\label{eq:eq102a}
\idotsint\limits_\mathbf{I}
\prod_{j\in A'}\frac{(x_j-u_j)_+^{n_j-1}}{(n_j-1)!}\prod_{j\in A}(-1)^{n_j}\frac{(x_j-u_j)_-^{n_j-1}}{(n_j-1)!}
\:d\gamma(x_1,\dots,x_d)\geq 0
\end{equation}
for all $A\subset\{1,2,\ldots,d\}$ and $(u_1, \ldots, u_d)\in \mathbf{I}$.

\end {itemize}
\end{Thm}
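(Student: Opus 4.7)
The plan is to prove both implications by mirroring, almost verbatim, the proof of Theorem~\ref{th:theorem106}, replacing expectations under $(X_1,\dots,X_d)$ and $(Y_1,\dots,Y_d)$ by integrals against the signed measure $\gamma$. The moment assumption on $|\gamma|$ plays exactly the role of the finite-moment conditions on the random vectors there, and the integral representation of Theorem~\ref{th:repres_int_v3} is again the main ingredient.

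For the implication ($\Rightarrow$), I would first observe that every continuous pseudo-polynomial $W$ of degree $(n_1-1,\dots,n_d-1)$ is box-$\mathbf n$-affine by Lemma~\ref{lem:pseudo3}, so $\pm W$ are both continuous and box-$\mathbf n$-convex; applying the hypothesis to each gives \eqref{eq:eqth107a}. For \eqref{eq:eq102a}, fix $A\subset\{1,2,\dots,d\}$ and $\mathbf u=(u_1,\dots,u_d)\in\mathbf I$, pick any $\boldsymbol\alpha\in\mathbf I$ with $\alpha_j<u_j$ for $j\in A'$ and $\alpha_j>u_j$ for $j\in A$, and apply Theorem~\ref{th:repres_int_v3} with $W_{\boldsymbol\alpha}=0$ and $\mu=\delta_{\mathbf u}$ to conclude that the kernel
$$f_{A,\mathbf u}(\mathbf x)=\prod_{j\in A'}\frac{(x_j-u_j)_+^{n_j-1}}{(n_j-1)!}\prod_{j\in A}(-1)^{n_j}\frac{(x_j-u_j)_-^{n_j-1}}{(n_j-1)!}$$
is continuous and box-$\mathbf n$-convex on $\mathbf I$; applying the hypothesis to $f=f_{A,\mathbf u}$ yields \eqref{eq:eq102a}.

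For ($\Leftarrow$), let $f\colon\mathbf I\to\mathbb R$ be continuous and box-$\mathbf n$-convex, fix any $\boldsymbol\alpha\in\mathbf I$, and invoke Theorem~\ref{th:repres_int_v3} to produce a pseudo-polynomial $W_{\boldsymbol\alpha}$ of degree $(n_1-1,\dots,n_d-1)$ and a Borel measure $\mu$ finite on convex sets such that
$$f(\mathbf x)=W_{\boldsymbol\alpha}(\mathbf x)+\sum_{A\subset\{1,\dots,d\}}\idotsint\limits_{\mathbf I_{A,\boldsymbol\alpha}}f_{A,\mathbf u}(\mathbf x)\,d\mu(\mathbf u).$$
Integrating both sides against $\gamma$ and interchanging the order of integration by Fubini produces
$$\idotsint\limits_{\mathbf I}f\,d\gamma=\idotsint\limits_{\mathbf I}W_{\boldsymbol\alpha}\,d\gamma+\sum_{A\subset\{1,\dots,d\}}\idotsint\limits_{\mathbf I_{A,\boldsymbol\alpha}}\Bigl(\idotsint\limits_{\mathbf I}f_{A,\mathbf u}\,d\gamma\Bigr)d\mu(\mathbf u),$$
in which the first term vanishes by \eqref{eq:eqth107a} and each inner integral is nonnegative by \eqref{eq:eq102a}, giving $\idotsint\limits_{\mathbf I}f\,d\gamma\geq 0$.

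The main technical obstacle I anticipate is justifying the Fubini interchange in the backward direction, since $\gamma$ is signed rather than positive and $\mu$ is only locally finite. I would handle this by bounding
$$|f_{A,\mathbf u}(\mathbf x)|\leq\prod_{j=1}^d\frac{(|x_j|+|u_j|)^{n_j-1}}{(n_j-1)!},$$
splitting $\gamma=\gamma^+-\gamma^-$ via the Hahn decomposition, applying classical Fubini on an exhaustion of $\mathbf I_{A,\boldsymbol\alpha}$ by compact subsets on which $\mu$ is finite, and passing to the limit by dominated convergence using the moment hypothesis on $|\gamma|$. The same moment hypothesis ensures the integrability of $W_{\boldsymbol\alpha}$ against $|\gamma|$, so the displayed decomposition of $\idotsint\limits_{\mathbf I}f\,d\gamma$ is meaningful.
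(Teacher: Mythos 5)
Your proposal is correct and takes essentially the same route as the paper, which simply derives Theorem~\ref{th:theorem107} from Theorem~\ref{th:theorem106}; the proof of the latter is exactly your argument (testing against $\pm W$ for a), realizing the kernels $f_{A,\mathbf u}$ as box-$\mathbf n$-convex via Theorem~\ref{th:repres_int_v3} with $W_{\boldsymbol\alpha}=0$ and $\mu=\delta_{\mathbf u}$ for b), and the integral representation plus interchange of integrals for the converse). Your explicit attention to justifying the Fubini step is more careful than what the paper records but does not alter the approach.
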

By Theorem \ref{th:theorem107}, we obtain the following characterization of box-$\mathbf{n}$-convex order for the signed measure $\gamma=\gamma_1\otimes, \ldots,\otimes \gamma_d$, which is a product measure of $\gamma_1,\ldots ,\gamma_d$.

\begin{Thm}\label{thm:theorem108}
Let $\gamma_i$ be  non zero signed finite Borel measures on $I_i$, such that $\int_{I_i}|x_i|^{n_i-1}\:d|\gamma_i|(x_i)<\infty$, $i=1,\ldots,d$. Then
\begin{equation}\label{eq:boxint2bis}
\idotsint\limits_\mathbf{I} f(x_1,\dots,x_d)\:d\gamma_1(x_1)\ldots\:d\gamma_d(x_d)\geq 0
\end{equation}
for all continuous  box-$\mathbf n$-convex functions $f\colon \mathbf{I }\to\mathbb R$ 
 (provided the integral exists) if, and only if , 
\begin{itemize}
\item[{\rm\textbf{a)}}]
for all $j=1,\ldots,d$ either $\gamma_j\geq_{(n_j-1)\mhyphen cx}0$ or $\gamma_j\geq_{(n_j-1)\mhyphen cv}0$, 
\item[{\rm\textbf{b)}}]
the number of those $j$ for which $\gamma_j\geq_{(n_j-1)\mhyphen cv}0$ is even.
\end{itemize}
\end{Thm}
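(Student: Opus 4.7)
My plan is to reduce the statement to Theorem~\ref{th:theorem107} applied to the product signed measure $\gamma:=\gamma_1\otimes\cdots\otimes\gamma_d$ on $\mathbf{I}$. Our moment integrability assumption gives $\gamma$ finite variation with $\idotsint_{\mathbf I}|x_1|^{n_1-1}\cdots|x_d|^{n_d-1}\,d|\gamma|<\infty$, so Theorem~\ref{th:theorem107} reduces \eqref{eq:boxint2bis} to its two conditions \eqref{eq:eqth107a} and \eqref{eq:eq102a}. Both of these factorise across the $d$ coordinates by Fubini, and the resulting one-dimensional statements will then be matched to (a) and (b) using Theorem~\ref{th:theorem103}, Remark~\ref{rem:rem104}, and Remark~\ref{rem:rem105}.

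For the implication~$(\Leftarrow)$, I first observe that the hypothesis $\gamma_j\geq_{(n_j-1)\mhyphen cx}0$ or $\gamma_j\geq_{(n_j-1)\mhyphen cv}0$ forces $\int_{I_j}x_j^k\,d\gamma_j=0$ for every $k=0,1,\dots,n_j-1$ (for $k\geq 1$ by Theorem~\ref{th:theorem103}; for $k=0$ because any constant is both $(n_j-1)$-convex and $(n_j-1)$-concave, so $\pm\gamma_j(I_j)\geq 0$). Condition~\eqref{eq:eqth107a} then follows: by Theorem~\ref{thm:pseudo}, any pseudo-polynomial $W$ of degree $(n_1-1,\dots,n_d-1)$ is box-$\mathbf{n}$-affine, so with the other coordinates fixed $x_i\mapsto W(x_1,\dots,x_d)$ is a polynomial of degree at most $n_i-1$ whose integral against $d\gamma_i$ vanishes; Fubini closes the argument. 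To verify \eqref{eq:eq102a}, Fubini factorises its left-hand side as
\[
\prod_{j\in A'}\frac{1}{(n_j-1)!}\int_{I_j}(x_j-u_j)_+^{n_j-1}\,d\gamma_j(x_j)\cdot\prod_{j\in A}\frac{(-1)^{n_j}}{(n_j-1)!}\int_{I_j}(x_j-u_j)_-^{n_j-1}\,d\gamma_j(x_j).
\]
By Theorem~\ref{th:theorem103} together with Remark~\ref{rem:rem104}, when $\gamma_j\geq_{(n_j-1)\mhyphen cx}0$ both $\int(x_j-u_j)_+^{n_j-1}d\gamma_j$ and $(-1)^{n_j}\int(x_j-u_j)_-^{n_j-1}d\gamma_j$ are $\geq 0$; by Remark~\ref{rem:rem105} they become $\leq 0$ when $\gamma_j\geq_{(n_j-1)\mhyphen cv}0$. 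Hence each of the $d$ factors has a definite sign depending only on the order chosen for $\gamma_j$ (and not on whether $j\in A$), so the product has sign $(-1)^c$, where $c$ is the number of cv-ordered $\gamma_j$'s; hypothesis~(b) makes $c$ even.

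For the implication~$(\Rightarrow)$, Theorem~\ref{th:theorem107} supplies \eqref{eq:eqth107a} and \eqref{eq:eq102a} for $\gamma$. Applying~\eqref{eq:eqth107a} to the separable pseudo-polynomial $W(\mathbf{x})=x_{j_0}^k\cdot h(\mathbf{x}_{\{j_0\}'})$ (with $0\leq k\leq n_{j_0}-1$ and arbitrary continuous $h$) gives $\bigl(\int x_{j_0}^k\,d\gamma_{j_0}\bigr)\bigl(\int h\,d\gamma_{\{j_0\}'}\bigr)=0$; since each $\gamma_j$ is non-zero, the measure $\gamma_{\{j_0\}'}:=\bigotimes_{j\neq j_0}\gamma_j$ is non-zero as well, so some continuous $h$ makes the second factor non-vanishing and forces $\int x_{j_0}^k\,d\gamma_{j_0}=0$ for all such $k$. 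Setting $\rho_j(u):=\int(x-u)_+^{n_j-1}\,d\gamma_j(x)$, condition~\eqref{eq:eq102a} with $A=\emptyset$ yields $\prod_j\rho_j(u_j)\geq 0$ on $\mathbf{I}$. Each $\rho_j$ is not identically zero, for otherwise, combined with the moment vanishing just obtained, $\gamma_j$ would annihilate every continuous $(n_j-1)$-convex function, and the one-dimensional spline representation would force $\gamma_j=0$. Fixing $u_i^0$ with $\rho_i(u_i^0)\neq 0$ for $i\neq j$, the inequality pins $\rho_j$ to a single sign; Theorem~\ref{th:theorem103} then gives $\gamma_j\geq_{(n_j-1)\mhyphen cx}0$ when $\rho_j\geq 0$, and Remark~\ref{rem:rem105} gives $\gamma_j\geq_{(n_j-1)\mhyphen cv}0$ when $\rho_j\leq 0$, proving~(a). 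Evaluating $\prod_j\rho_j(u_j^0)\geq 0$ at a point where every factor is non-zero then forces the number of negative factors — equivalently, of cv-ordered $\gamma_j$'s — to be even, proving~(b).

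The main obstacle lies in the backward step: extracting single-coordinate moment vanishing and a constant-sign dichotomy for each $\rho_j$ from a multivariate product inequality. Each $\gamma_j\neq 0$ is the key non-degeneracy used to rule out $\rho_j\equiv 0$ and to find test functions $h$ with $\int h\,d\gamma_{\{j_0\}'}\neq 0$; once these two issues are handled, the sign-bookkeeping that turns the product non-negativity into the parity condition (b) is automatic.
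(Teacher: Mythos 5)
Your proposal is correct and follows essentially the same route as the paper: reduce to Theorem~\ref{th:theorem107} for the product measure $\gamma_1\otimes\cdots\otimes\gamma_d$, factorise conditions \eqref{eq:eqth107a} and \eqref{eq:eq102a} coordinate-wise by Fubini, and translate the resulting one-dimensional statements via Theorem~\ref{th:theorem103} and Remarks~\ref{rem:rem104} and~\ref{rem:rem105}; you even supply two justifications the paper leaves implicit (the vanishing of the zeroth moment via constants being both $(n_j-1)$-convex and $(n_j-1)$-concave, and the fact that $\rho_j\not\equiv0$ when $\gamma_j\neq0$). One sentence needs repair: for a general pseudo-polynomial $W$ of degree $(n_1-1,\dots,n_d-1)$ the section $x_i\mapsto W(x_1,\dots,x_d)$ is \emph{not} a polynomial of degree at most $n_i-1$, since the blocks $A_{jk}(\mathbf x_{\{j\}'})x_j^k$ with $j\neq i$ depend on $x_i$ arbitrarily, so integrating $W$ against $d\gamma_i$ alone does not yield zero. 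The fix is the paper's term-by-term argument: for each summand $A_{ik}(\mathbf x_{\{i\}'})x_i^k$, Fubini gives the factor $\int_{I_i}x_i^k\,d\gamma_i(x_i)=0$ times the integral of $A_{ik}$ against the remaining measures, so every term vanishes separately and \eqref{eq:eqth107a} follows.
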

\begin{proof}
We show the implication ($\Rightarrow$).
By Theorem \ref{th:theorem107} (a)
$$
\idotsint\limits_\mathbf{I} W(x_1,\dots,x_d)\:\:d\tau_1(x_1)\ldots\:d\gamma_d(x_d)=0
$$
for all continuous pseudo-polynomials of degree $(n_1-1, \ldots, n_d-1) $ of the form
\begin{equation}\label{eq:eq105c}
W(x_1,x_2,\dots,x_d)=\sum_{i=1}^d\sum_{k=0}^{n_i-1}A_{ik}(\mathbf x_{\{i\}'})x_i^k,
\end{equation}
provided the integral exists.
Then, for all $i=1,\ldots, d$ and $k=0,\ldots, n_i-1$
\begin{equation}\label{eq:eq105b}
\int_{I_i}x_i^k\:d \gamma_i(x_i)\idotsint\limits_{I_1\times\ldots\times I_{i-1}\times I_{i-1}\times\ldots\times I_d} A_{ik}(\mathbf x_{\{i\}'})\:d\gamma_1(x_1)\ldots\:d\gamma_{i-1}(x_{i-1})\ d\gamma_{i+1}(x_1)\ldots\:d\gamma_d (x_d)=0.
\end{equation}
Since $\gamma _i$, $i=1, \ldots, d$ are non-zero signed measures, it follows that there exists the functions $A_{ik}$, such that the second integral in the above expression is non-zero. Then by \eqref{eq:eq105b}, we obtain
\begin{equation}\label{eq:eq106}
\int_{I_i}x_i^k\:d\gamma_i(x_i)=0, \quad i=1,\ldots, d, \:k=0,1,\ldots, n_i-1.
\end{equation}
By \eqref{eq:eq102a}, for $A=\emptyset$, we have
\begin{equation}\label{eq:eq107}
\prod_{j=1}^{d}\int_{I_j}\frac{(x_j-u_j)_+^{n_j-1}}{(n_j-1)!}\:d\gamma_j(x_j)\geq 0
\end{equation}
for all $(u_1, \ldots, u_d)\in \mathbf{I}$. We denote
$$
H(\gamma_j,u_j) =\int_{I_j}\frac{(x_j-u_j)_+^{n_j-1}}{(n_j-1)!}\:d\gamma_j(x_j).
$$
Since $\gamma _i$, $i=1, \ldots, d$, are non-zero signed measures, it follows that there exist $(v_1, \ldots, v_d)\in \mathbf{I}$ such that 
$\label{eq:eq107a}
H(\gamma_i,v_i)\neq0$
for $i=1,\dots,d$.

Let $i=1,\ldots, d$. By \eqref{eq:eq107}, $H(\gamma_i,u_i)\prod_{j\neq i}H(\gamma_j,v_j)\geq0$ for each $u_i\in I_i$. Consequently, either $H(\gamma_i,u_i)\geq 0$ for all $u_i\in I_i$ or $H(\gamma_i,u_i)\leq 0$ for all $u_i\in I_i$. 
 
Moreover, by \eqref{eq:eq107}, we conclude, that the number of $j$'s such that $H(\gamma_j,u_j)\leq 0$ for all $u_j\in I_j$, is even. Taking into account \eqref{eq:eq106}, by Theorem \ref{th:theorem103} and Remark \ref{rem:rem105}, ($\Rightarrow$) is proved.

We pass to the proof of the implication ($\Leftarrow$).
Assume, that conditions a) and b) are satisfied. By Theorem \ref{th:theorem103},
we have that \eqref{eq:eq106} is satisfied, then taking into account \eqref{eq:eq105b}, \eqref{eq:eq105c}, we obtain \eqref{eq:eqth107a} with $\gamma=\gamma_1\otimes, \ldots,\otimes \gamma_d$.

By Theorem \ref{th:theorem103} and Remark  \ref{rem:rem105}
 we have that for all $j=1,\ldots, d$, either $H(\gamma_j,u_j)\geq 0$ for all $u_j\in I_j$ or $H(\gamma_j,u_j)\leq 0$ for all $u_j\in I_j$, and the number of those $j$ for which $H(\gamma_j,u_j)\leq 0$ for all $u_j\in I_j$, is even. 
 
By Remark \ref{rem:rem104}, we conclude that the function $H(\gamma_j,u_j)$ has the same sign as the function
$$S(\gamma_j,u_j)=\int_{I_j}(-1)^{n_j-1}\frac{(x_j-u_j)_+^{n_j-1}}{(n_j-1)!}\:d\gamma_j(x_j).$$
This implies that
for each $A\subset\{1,\dots,d\}$ and $(u_1,\dots,u_d)\in\mathbf I$, we have
$$\prod_{j\in A'}H(\gamma_j,u_j)\prod_{j\in A}S(\gamma_j,u_j)\geq0.$$
Consequently, \eqref{eq:eq102a} is satisfied for $\gamma=\gamma_1\otimes, \ldots,\otimes \gamma_d$. By Theorem \ref{th:theorem107}, this completes the proof of ($\Leftarrow$).
\end{proof}

By Theorem \ref{th:theorem103} and Remark \ref{rem:rem104}, we obtain that Theorem \ref{thm:theorem108} can be rewritten in the following form.

\begin{Thm}\label{thm:theorem37a}
Let $\gamma_i$ be  non zero signed finite Borel measures on $I_i$, such that $\int_{I_i}|x_i|^{n_i-1}\:d|\gamma_i|(x_i)<\infty$, $i=1,\ldots,d$. Then
\begin{equation*}
\idotsint\limits_\mathbf{I} f(x_1,\dots,x_d)\:d\gamma_1(x_1)\ldots\:d\gamma_d(x_d)\geq 0
\end{equation*}
for all continuous  box-$\mathbf n$-convex functions $f\colon \mathbf{I }\to\mathbb R$ (provided the integral exists) if, and only if , 
\begin{itemize}
\item[{\rm\textbf{a)}}]
\begin{equation}\label{eq:eq106bis}
\int_{I_i}x_i^k\:d\gamma_i(x_i)=0, \quad i=1,\ldots, d, \:k=0,1,\ldots, n_i-1.
\end{equation}
\item[{\rm\textbf{b)}}]
\begin{equation*}\label{eq:eq107bis}
\prod_{j=1}^{d}\int_{I_j}\frac{(x_j-u_j)_+^{n_j-1}}{(n_j-1)!}\:d\gamma_j(x_j)\geq 0
\end{equation*}
for all $(u_1, \ldots, u_d)\in \mathbf{I}$.
\end{itemize}
\end{Thm}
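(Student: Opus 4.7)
The plan is to deduce Theorem~\ref{thm:theorem37a} from Theorem~\ref{thm:theorem108} by verifying the equivalence of the two sets of conditions. The bridge between them is Theorem~\ref{th:theorem103} together with Remarks~\ref{rem:rem104} and~\ref{rem:rem105}: for each $j=1,\dots,d$, the one-dimensional order condition $\gamma_j\geq_{(n_j-1)\mhyphen cx}0$ (resp.\ $\gamma_j\geq_{(n_j-1)\mhyphen cv}0$) is equivalent to the vanishing of the moments $\int_{I_j}x_j^k\,d\gamma_j(x_j)=0$ for $k=0,1,\dots,n_j-1$, together with $H(\gamma_j,u_j)\geq0$ (resp.\ $\leq0$) for every $u_j\in I_j$, where
$$H(\gamma_j,u_j)=\int_{I_j}\frac{(x_j-u_j)_+^{n_j-1}}{(n_j-1)!}\,d\gamma_j(x_j).$$
(Although condition~(a) in Theorem~\ref{th:theorem103} is stated for $k=1,\dots,n$, the $k=0$ case is automatic, since the constant function~$1$ is both $(n_j-1)$-convex and $(n_j-1)$-concave.)

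For the implication $(\Leftarrow)$, assume conditions (a) and (b) of Theorem~\ref{thm:theorem108} hold. Set $\epsilon_j=+1$ if $\gamma_j\geq_{(n_j-1)\mhyphen cx}0$ and $\epsilon_j=-1$ if $\gamma_j\geq_{(n_j-1)\mhyphen cv}0$. The characterization above gives the vanishing of all required moments, which is~(a) of Theorem~\ref{thm:theorem37a}. Moreover $\epsilon_j H(\gamma_j,u_j)\geq0$ for every $u_j$, and since condition~(b) of Theorem~\ref{thm:theorem108} says an even number of $\epsilon_j$ equal $-1$, we have $\prod_{j=1}^{d}\epsilon_j=+1$, hence $\prod_{j=1}^{d}H(\gamma_j,u_j)\geq0$, which is~(b) of Theorem~\ref{thm:theorem37a}.

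For the implication $(\Rightarrow)$, assume conditions (a) and (b) of Theorem~\ref{thm:theorem37a}. Fix $i\in\{1,\dots,d\}$ and, for every $j\neq i$, pick $v_j\in I_j$ with $H(\gamma_j,v_j)\neq0$; such $v_j$ exists because $\gamma_j\neq0$ and its moments of orders $0,1,\dots,n_j-1$ all vanish (if in addition $H(\gamma_j,\cdot)\equiv0$, then $\gamma_j$ would annihilate a family of functions sufficient, via the standard spline representation of $C^{n_j-1}$ functions, to force $\gamma_j=0$). Applying condition~(b) of Theorem~\ref{thm:theorem37a} at those fixed $v_j$'s yields
$$H(\gamma_i,u_i)\cdot\prod_{j\neq i}H(\gamma_j,v_j)\geq 0\qquad\text{for all }u_i\in I_i,$$
so $H(\gamma_i,\cdot)$ has a constant sign $\epsilon_i\in\{-1,+1\}$ on $I_i$. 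Combined with the moment conditions, Theorem~\ref{th:theorem103} and Remark~\ref{rem:rem105} give $\gamma_i\geq_{(n_i-1)\mhyphen cx}0$ or $\gamma_i\geq_{(n_i-1)\mhyphen cv}0$, i.e.\ (a) of Theorem~\ref{thm:theorem108}. Finally, evaluating (b) of Theorem~\ref{thm:theorem37a} at $(v_1,\dots,v_d)$ forces $\prod_{j=1}^{d}\epsilon_j=+1$, so an even number of $\epsilon_j$ equal $-1$, which is~(b) of Theorem~\ref{thm:theorem108}. Theorem~\ref{thm:theorem108} then yields the desired equivalence.

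The main subtlety is the existence of the points $v_j$; it reduces to the one-dimensional fact that a finite signed measure whose power moments up to order $n_j-1$ all vanish and whose truncated-power transform $H(\gamma_j,\cdot)$ vanishes identically must itself be zero. Once this is granted, the rest of the argument is merely a clean bookkeeping of signs.
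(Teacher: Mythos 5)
Your proof is correct and takes essentially the same route as the paper, which obtains this theorem by ``rewriting'' Theorem~\ref{thm:theorem108} via the one-dimensional characterization of Theorem~\ref{th:theorem103} and Remarks~\ref{rem:rem104}--\ref{rem:rem105}, with the same sign bookkeeping through the quantities $H(\gamma_j,u_j)$. Your explicit justification of the existence of the points $v_j$ (and of the $k=0$ moment condition) only makes precise details that the paper leaves implicit.
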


From Theorem \ref{thm:theorem108}, it follows immediately the following theorem.
\begin{Thm}\label{thm:thm28bis}
Let $\mu_i$, $\nu_i$ be probability measures on $I_i$ such that $\int_{I_i}|x_i|^{n_i-1}$ $d\mu_i(x_i)<\infty$, $\int_{I_i}|x_i|^{n_i-1}\:d\nu_i(x_i)<\infty$ and $\nu_i\leq _{(n_j-1)\mhyphen cx}\mu_i$, $i=1,\ldots,d$.  Then
\begin{equation*}\label{eq:boxint2tres}
\idotsint\limits_\mathbf{I} f(x_1,\dots,x_d)\:d(\mu_1-\nu_1)(x_1)\ldots\:d(\mu_d-\nu_d)(x_d)\geq 0
\end{equation*}
for all continuous  box-$\mathbf n$-convex functions $f\colon \mathbf{I }\to\mathbb R$ (provided the integral exists).
\end{Thm}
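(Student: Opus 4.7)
The plan is to reduce this to Theorem~\ref{thm:theorem37a} by taking $\gamma_i=\mu_i-\nu_i$ for $i=1,\ldots,d$ and then appealing to the characterization of the one-dimensional $(n_i-1)$-convex order provided by Theorem~\ref{th:theorem100}.

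First I would handle the degenerate case: if $\mu_i=\nu_i$ for some $i$, then $\gamma_i=0$, the product measure $\gamma_1\otimes\cdots\otimes\gamma_d$ vanishes, and the claimed integral is $0\geq 0$. So we may assume every $\gamma_i$ is a nonzero finite signed Borel measure on $I_i$. Finiteness of variation together with the required integrability follows from
$$\int_{I_i}|x_i|^{n_i-1}\,d|\gamma_i|(x_i)\leq\int_{I_i}|x_i|^{n_i-1}\,d\mu_i(x_i)+\int_{I_i}|x_i|^{n_i-1}\,d\nu_i(x_i)<\infty.$$

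Next I would verify the two hypotheses of Theorem~\ref{thm:theorem37a} for these $\gamma_i$. For condition (a), note that $\int_{I_i}1\,d\gamma_i=\mu_i(I_i)-\nu_i(I_i)=1-1=0$, while $\nu_i\leq_{(n_i-1)\mhyphen cx}\mu_i$ combined with Theorem~\ref{th:theorem100}(a) (applied with $n$ replaced by $n_i-1$) gives $\int x_i^k\,d\mu_i=\int x_i^k\,d\nu_i$ for $k=1,\ldots,n_i-1$, hence $\int x_i^k\,d\gamma_i=0$ for $k=0,1,\ldots,n_i-1$. For condition (b), Theorem~\ref{th:theorem100}(b) gives
$$\int_{I_i}\frac{(x_i-u_i)_+^{n_i-1}}{(n_i-1)!}\,d\gamma_i(x_i)=\int_{I_i}\frac{(x_i-u_i)_+^{n_i-1}}{(n_i-1)!}\,d\mu_i(x_i)-\int_{I_i}\frac{(x_i-u_i)_+^{n_i-1}}{(n_i-1)!}\,d\nu_i(x_i)\geq 0$$
for every $u_i\in I_i$. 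Since every one of the $d$ factors is non-negative, their product is non-negative as well.

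Both hypotheses of Theorem~\ref{thm:theorem37a} being satisfied, that theorem yields
$$\idotsint\limits_{\mathbf I}f(x_1,\dots,x_d)\,d(\mu_1-\nu_1)(x_1)\cdots d(\mu_d-\nu_d)(x_d)\geq 0$$
for every continuous box-$\mathbf n$-convex function $f\colon\mathbf I\to\mathbb R$ (provided the integral exists), which is the conclusion. There is no real obstacle here; the proof is essentially a one-line application of Theorem~\ref{thm:theorem37a} once one observes that the $(n_i-1)$-convex order controls exactly the two quantities appearing in its hypotheses.
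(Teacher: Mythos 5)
Your proof is correct and follows essentially the same route as the paper, which deduces the statement immediately from Theorem~\ref{thm:theorem108} by noting that each $\gamma_i=\mu_i-\nu_i$ satisfies $\gamma_i\geq_{(n_i-1)\mhyphen cx}0$ and that zero (an even number) of the factors are of concave type. Your detour through Theorem~\ref{thm:theorem37a} and Theorem~\ref{th:theorem100} merely unfolds the equivalent moment-and-spline formulation of the same condition, and your separate treatment of the degenerate case $\mu_i=\nu_i$ is a harmless (and slightly more careful) addition.
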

\section{The Hermite-Hadamard, Jensen and Ra\c{s}a inequalities}

We recall the classical Hermite-Hadamard and Jensen inequalities.

\begin{prop}\label{prop:prop25}
Let $f \colon I\to\mathbb{R}$ be a convex function defined on a real interval $I$ and $a,b \in I$ with $a<b$. The following double inequality
\begin{equation*}
f\left(\frac{a+b}2\right)\leq\frac1{b-a}\cdot\int_a^b f(x)\,dx\leq\frac{f(a)+f(b)}2 \label{eq:0bis}
\end{equation*}
is known as the Hermite-Hadamard inequality for convex functions (see \cite{DraPe2000}), which is equivalent to the convex orde\-ring relations (see \cite{rajba2014})
\begin{equation*}\label{eq:eq25}
\delta_{(a+b)/2}\leq_{1\mhyphen cx}\:\frac1{b-a}\chi _{[a,b]}(x)\, dx\leq_{1\mhyphen cx}\: \frac{1}{2}(\delta_a+\delta_b).
\end{equation*}
\end{prop}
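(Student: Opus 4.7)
The plan is to handle the two inequalities of the Hermite--Hadamard chain separately by standard one-variable arguments, and then observe that the ordering reformulation is essentially a change of language.

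For the left inequality, I would exploit the existence of a supporting line at the midpoint. Since $f$ is convex on $I$ and $(a+b)/2$ is an interior point of $[a,b]\subset I$, there exists $\lambda\in\mathbb R$ such that
\begin{equation*}
f(x)\geq f\!\left(\tfrac{a+b}{2}\right)+\lambda\!\left(x-\tfrac{a+b}{2}\right)\quad\text{for all }x\in I.
\end{equation*}
Integrating this pointwise inequality over $[a,b]$ and dividing by $b-a$, the linear term vanishes (by symmetry of $x\mapsto x-(a+b)/2$ around the midpoint), which yields $f\!\left(\tfrac{a+b}{2}\right)\leq \tfrac{1}{b-a}\int_a^b f(x)\,dx$.

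For the right inequality, I would parametrize points of $[a,b]$ by $x=ta+(1-t)b$ with $t=\frac{b-x}{b-a}\in[0,1]$ and apply the defining inequality of convexity to obtain $f(x)\leq tf(a)+(1-t)f(b)$. The substitution $x=ta+(1-t)b$, $dx=-(b-a)\,dt$, then gives
\begin{equation*}
\frac{1}{b-a}\int_a^b f(x)\,dx\leq\int_0^1\bigl(tf(a)+(1-t)f(b)\bigr)\,dt=\frac{f(a)+f(b)}{2}.
\end{equation*}

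Finally, the equivalence with the claimed $\leq_{1\mhyphen cx}$ relations is a direct unwinding of the definition. By the definition of the $1$-convex (i.e.\ ordinary convex) order for signed measures recalled in Section 9, $\gamma_1\leq_{1\mhyphen cx}\gamma_2$ means $\int f\,d\gamma_1\leq\int f\,d\gamma_2$ for every convex $f\colon I\to\mathbb R$ for which both integrals exist. Evaluating against the three measures $\delta_{(a+b)/2}$, $\tfrac{1}{b-a}\chi_{[a,b]}(x)\,dx$ and $\tfrac{1}{2}(\delta_a+\delta_b)$ gives exactly the three quantities in the Hermite--Hadamard chain, so the double inequality is nothing but the two orderings applied to arbitrary convex $f$, and vice versa. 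There is essentially no obstacle here: the statement is classical and cited via \cite{DraPe2000,rajba2014}; the only non-automatic ingredient is the existence of the supporting line used in the midpoint inequality, which is standard for convex functions.
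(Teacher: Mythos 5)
Your proof is correct. Note that the paper does not actually prove Proposition \ref{prop:prop25}: it is recalled as a classical fact with references to the literature (\cite{DraPe2000}, \cite{rajba2014}), so there is no in-paper argument to compare against. Your two steps — the supporting line at the midpoint $(a+b)/2$ (which is an interior point of $I$ since $a<\frac{a+b}{2}<b$ with $a,b\in I$) for the left inequality, and the pointwise bound $f(ta+(1-t)b)\leq tf(a)+(1-t)f(b)$ integrated in $t$ for the right one — are the standard textbook proof, and your observation that the ordering formulation is just the definition of the convex order evaluated against the three measures $\delta_{(a+b)/2}$, $\frac{1}{b-a}\chi_{[a,b]}(x)\,dx$ and $\frac12(\delta_a+\delta_b)$ is exactly right (all three are finite measures supported on $[a,b]$, where any convex $f$ is bounded between the supporting line and $\max(f(a),f(b))$, so the relevant integrals exist).
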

\begin{prop}\label{prop:prop25a} 
One of the most familiar and elementary inequalities in the probability theory is the Jensen inequality:
\begin{equation}\label{eq:conv_prob}
 f\bigl(\E X \bigr) \leq \E f(X),
\end{equation}
where the function $f$ is convex over the convex hull of the range of the random variable~$X$ (see \cite{Bil95}). Inequality \eqref{eq:conv_prob} is equivalent to the convex orde\-ring relation (see \cite{rajba2014})
\begin{equation*}\label{eq:eq25a}
\delta_{\E X}\leq_{1\mhyphen cx}\:\mu _X.
\end{equation*}
\end{prop}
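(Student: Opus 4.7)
The plan is to unpack the definition of the convex order for signed measures, as introduced in Section~9 just before Theorem~\ref{th:theorem103}, specialized to the particular pair of measures $\delta_{\E X}$ and $\mu_X$. By that definition, $\gamma_1\leq_{1\mhyphen cx}\gamma_2$ means $\int_I f\,d\gamma_1\leq\int_I f\,d\gamma_2$ for every $1$-convex function $f\colon I\to\mathbb R$ (which, as recalled in Remark~\ref{rem:onedim}, coincides with classical convexity), provided the integrals exist. Here $I$ is any interval containing the convex hull of the range of $X$.

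First I would compute each side for the measures at hand. Since $\delta_{\E X}$ is the Dirac mass at the point $\E X$, we have $\int_I f\,d\delta_{\E X}=f(\E X)$; note that the point $\E X$ lies in the convex hull of the range of $X$ (the convex hull of a set is the smallest convex set containing it, and expectations of $I$-valued random variables lie in $I$), so $f(\E X)$ is well defined. By the change-of-variables formula (push-forward of the underlying probability measure by $X$), $\int_I f\,d\mu_X=\E f(X)$, under the usual existence caveat.

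Thus the defining inequality $\int_I f\,d\delta_{\E X}\leq \int_I f\,d\mu_X$ becomes exactly $f(\E X)\leq \E f(X)$, i.e.\ \eqref{eq:conv_prob}. The implication ($\delta_{\E X}\leq_{1\mhyphen cx}\mu_X\Rightarrow$ Jensen) is then obtained by reading the equivalence from left to right for every convex $f$ for which the expectations exist. Conversely, the implication (Jensen $\Rightarrow\delta_{\E X}\leq_{1\mhyphen cx}\mu_X$) is obtained by reading the same equivalence from right to left; once Jensen's inequality holds for each convex $f$ such that $\E|f(X)|<\infty$, the defining condition of $\leq_{1\mhyphen cx}$ is satisfied.

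No step here is truly a technical obstacle, since the proposition is a direct translation of Jensen's inequality into the language of stochastic convex orders; the only point requiring mild attention is to verify that $\E X$ belongs to the convex hull of the range of $X$ so that $f(\E X)$ is meaningful, and that the integrability hypothesis on the right-hand side of \eqref{eq:conv_prob} matches the ``provided the integrals exist'' clause in the definition of $\leq_{1\mhyphen cx}$. With those trivialities in place, the equivalence is a one-line unfolding of definitions and requires no additional machinery beyond what has already been set up in the paper.
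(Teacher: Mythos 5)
Your proposal is correct: the paper states this proposition as a recalled classical fact (citing Billingsley and Rajba) without giving a proof, and your argument — unpacking the definition of $\leq_{1\mhyphen cx}$, computing $\int f\,d\delta_{\E X}=f(\E X)$ and $\int f\,d\mu_X=\E f(X)$ via the push-forward, and checking that $\E X$ lies in the relevant interval — is exactly the definitional unfolding the paper implicitly relies on. Nothing is missing.
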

In this paper, we give some Hermite-Hadamard, Jensen and Ra\c{s}a inequalities for box-$\mathbf{n}$-convex functions.

Note that Theorem \ref{thm:thm28bis} is equivalent to the following theorem.
\begin{Thm}\label{thm:thm28bisa}
Let $X_i$, $Y_i$ be $I_i$ valued random variables such that $\E |X_i |^{n_{i-1}}< \infty$, $\E |Y_i |^{n_{i-1}}< \infty$ and $X_i\leq _{(n_j-1)\mhyphen cx}Y_i$, $i=1,\ldots,d$. For each $A\subset \{1,\ldots,d\}$, by $Z_{1,A},\ldots, Z_{d,A}$ we denote independent random variables such that
\begin{equation}\label{eq:eqmuz}
\mu_{Z_{i,A}}=
 \left\{
\begin{aligned}
 \mu_{Y_i}\quad&\ \text{if }i\notin A,\\
 \mu_{X_i}\quad&\ \text{if } i \in A.
\end{aligned}
\right.
 \end{equation}
Then
\begin{equation}\label{eq_eqhermite}
\sum_{ A\subset \{1,\ldots , d  \}} (-1)^{|A|} \E f(Z_{1,A}, \ldots, Z_{d,A} )\geq 0, 
\end{equation}
for all continuous box-$\mathbf{n} $-convex functions $f\colon \mathbf{I }\to\mathbb R$ (provided the expectations exist).
\end{Thm}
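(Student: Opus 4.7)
The plan is to derive Theorem~\ref{thm:thm28bisa} as a direct reformulation of Theorem~\ref{thm:thm28bis}, by setting $\mu_i=\mu_{Y_i}$ and $\nu_i=\mu_{X_i}$ (so that the hypothesis $X_i\leq_{(n_i-1)\mhyphen cx}Y_i$ translates exactly into $\nu_i\leq_{(n_i-1)\mhyphen cx}\mu_i$), and expanding the product signed measure $(\mu_1-\nu_1)\otimes\cdots\otimes(\mu_d-\nu_d)$ combinatorially.

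First, I would verify that the moment-integrability hypotheses match: since $\E|X_i|^{n_i-1}<\infty$ and $\E|Y_i|^{n_i-1}<\infty$, the signed measures $\mu_i-\nu_i$ are finite Borel measures on $I_i$ with $\int_{I_i}|x_i|^{n_i-1}d|\mu_i-\nu_i|(x_i)<\infty$, so Theorem~\ref{thm:thm28bis} applies and yields
\[
\idotsint\limits_{\mathbf I}f(x_1,\dots,x_d)\,d(\mu_{Y_1}-\mu_{X_1})(x_1)\cdots d(\mu_{Y_d}-\mu_{X_d})(x_d)\geq 0
\]
for every continuous box-$\mathbf n$-convex $f\colon\mathbf I\to\mathbb R$ for which the integral exists.

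Next, I would expand the product of signed measures by distributivity. For each $A\subset\{1,\dots,d\}$, the term where the factor $-\mu_{X_i}$ is chosen exactly for $i\in A$ contributes the sign $(-1)^{|A|}$ and the product measure $\bigotimes_{i\notin A}\mu_{Y_i}\otimes\bigotimes_{i\in A}\mu_{X_i}$, which by \eqref{eq:eqmuz} is precisely the joint distribution of the independent vector $(Z_{1,A},\dots,Z_{d,A})$. Integrating $f$ against this product measure then equals $\E f(Z_{1,A},\dots,Z_{d,A})$ by Fubini's theorem, and summing over $A$ gives exactly the left-hand side of \eqref{eq_eqhermite}. There is essentially no obstacle here; the only care needed is to justify the use of Fubini and the expansion of the product under the integrability hypotheses, but this follows routinely from $\int_{I_i}|x_i|^{n_i-1}d|\mu_i-\nu_i|(x_i)<\infty$ combined with the structural integral representation of continuous box-$\mathbf n$-convex functions (Theorem~\ref{th:repres_int_v3}), which controls the growth of $f$ by a sum of products of one-variable $(n_i-1)$-convex growth.

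The main (very mild) subtlety is thus bookkeeping: matching the sign $(-1)^{|A|}$ with the choice of $\mu_{X_i}$ versus $\mu_{Y_i}$, and making sure that the independence structure of $(Z_{1,A},\dots,Z_{d,A})$ is precisely what turns the iterated integral over a product measure into the expectation $\E f(Z_{1,A},\dots,Z_{d,A})$. Once this is set up correctly the inequality is immediate from Theorem~\ref{thm:thm28bis}.
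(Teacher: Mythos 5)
Your proposal is correct and follows exactly the route the paper intends: the paper simply asserts that Theorem~\ref{thm:thm28bisa} is equivalent to Theorem~\ref{thm:thm28bis}, and your argument (taking $\mu_i=\mu_{Y_i}$, $\nu_i=\mu_{X_i}$, expanding $\bigotimes_i(\mu_{Y_i}-\mu_{X_i})$ over subsets $A$, and identifying each term with $(-1)^{|A|}\E f(Z_{1,A},\dots,Z_{d,A})$ via independence and Fubini) is precisely the omitted verification of that equivalence. The sign and distributional bookkeeping in your write-up matches \eqref{eq:eqmuz} and \eqref{eq_eqhermite}, so nothing is missing.
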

From Proposition \ref{prop:prop25} and Theorem \ref{thm:thm28bisa}, we obtain the following Hermite-Hadamard type inequalities for box-$(2,\ldots,2)$-convex functions. 
\begin{Thm}
Let $a_i,b_i \in I_i$ with $a_i<b_i$, $i=1,\ldots,d$.  
Then, for all continuous box-$(2,\ldots,2)$-convex functions $f\colon \mathbf{I }\to\mathbb R$,
\begin{itemize}
\item[{\rm\textbf{(a)}}]
 \textbf{the first Hermite-Hadamard inequality} is given by \eqref{eq_eqhermite}, with 
 $$
\mu_{X_i}=\delta_{(a_i+b_i)/2}\quad \mu_{Y_i}=\frac1{b_i-a_i}\chi _{[a_i,b_i]}(x)\, dx,\quad (i=1, \ldots,d),
 $$
 \item[{\rm\textbf{(b)}}]
\textbf{the second Hermite-Hadamard inequality}is given by \eqref{eq_eqhermite}, with 
 $$
 \mu_{X_i}=\frac1{b_i-a_i}\chi _{[a_i,b_i]}(x)\, dx,\quad \mu_{Y_i}=\frac{1}{2}(\delta_{a_i}+\delta_{b_i})\quad(i=1, \ldots,d).
 $$

\end{itemize}
\end{Thm}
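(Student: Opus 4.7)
The plan is to apply Theorem~\ref{thm:thm28bisa} directly with $\mathbf n=(2,2,\dots,2)$, so that $n_i-1=1$ for every $i=1,\dots,d$, and the required ordering $\leq_{(n_i-1)\mhyphen cx}$ reduces to the usual convex order $\leq_{1\mhyphen cx}$. Proposition~\ref{prop:prop25} states the classical one-dimensional Hermite-Hadamard inequality precisely as the chain
$$\delta_{(a+b)/2}\leq_{1\mhyphen cx}\frac1{b-a}\chi_{[a,b]}(x)\,dx\leq_{1\mhyphen cx}\frac12(\delta_a+\delta_b),$$
which supplies the needed univariate orderings coordinate by coordinate.

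For part~(a), I would let $X_i,Y_i$ be $I_i$-valued random variables with distributions $\mu_{X_i}=\delta_{(a_i+b_i)/2}$ and $\mu_{Y_i}=\frac1{b_i-a_i}\chi_{[a_i,b_i]}(x)\,dx$, respectively. The left inequality of the chain above gives $X_i\leq_{1\mhyphen cx}Y_i$ for each $i$. Since both measures are supported on the compact interval $[a_i,b_i]\subset I_i$, the moment conditions $\E|X_i|^{n_i-1}<\infty$ and $\E|Y_i|^{n_i-1}<\infty$ are automatic. For each $A\subset\{1,\dots,d\}$ one constructs the independent random variables $Z_{1,A},\dots,Z_{d,A}$ with the marginal laws prescribed by \eqref{eq:eqmuz}; Theorem~\ref{thm:thm28bisa} then delivers \eqref{eq_eqhermite}, which is exactly the first Hermite-Hadamard inequality asserted in~(a). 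Part~(b) follows by the same template, now using the right inequality of the chain together with $\mu_{X_i}=\frac1{b_i-a_i}\chi_{[a_i,b_i]}(x)\,dx$ and $\mu_{Y_i}=\frac12(\delta_{a_i}+\delta_{b_i})$; boundedness of the supports again yields the moment hypotheses at no cost.

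I do not anticipate any real obstacle: the result is a short corollary. The only points meriting attention are notational, namely that box-$(2,\dots,2)$-convexity matches the index convention $n_i-1=1$ demanded by Theorem~\ref{thm:thm28bisa}, and that the three univariate probability measures appearing in Proposition~\ref{prop:prop25} are genuinely probability measures supported inside $I_i$, so that the standard construction of independent vectors $(Z_{1,A},\dots,Z_{d,A})$ via \eqref{eq:eqmuz} goes through without additional hypotheses.
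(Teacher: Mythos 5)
Your proposal is correct and is essentially the paper's own argument: the paper derives this theorem precisely by combining Proposition~\ref{prop:prop25} (the one-dimensional Hermite--Hadamard inequality expressed as the convex ordering chain $\delta_{(a+b)/2}\leq_{1\mhyphen cx}\frac1{b-a}\chi_{[a,b]}(x)\,dx\leq_{1\mhyphen cx}\frac12(\delta_a+\delta_b)$) with Theorem~\ref{thm:thm28bisa} applied to $\mathbf n=(2,\dots,2)$, exactly as you do. Your additional remarks on the moment hypotheses being automatic for compactly supported measures are accurate and complete the verification.
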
 

For $d=3$, we obtain the following first Hermite-Hadamard inequality.
\begin{Thm}
Let $a_i,b_i \in I_i$ with $a_i<b_i$, $i=1,2,3$.  
Then, for all continuous box-$(2,2,2)$-convex functions $f\colon \mathbf{I }\to\mathbb R$, we have 

 \textbf{the first Hermite-Hadamard inequality}
\begin{multline*}
\frac1{(b_1-a_1)(b_2-a_2)(b_3-a_3)}\int_{a_1}^{b_1}\int_{a_2}^{b_2}\int_{a_3}^{b_3}f(x,y,z)dz\,dy\,dx
-\frac1{(b_2-a_2)(b_3-a_3)}\int_{a_2}^{b_2}\int_{a_3}^{b_3}f\left(\frac{a_1+b_1}{2},y,z\right)dz\,dy\\\\
-\frac1{(b_1-a_1)(b_3-a_3)}\int_{a_1}^{b_1}\int_{a_3}^{b_3}f\left(x,\frac{a_2+b_2}{2},z\right)dz\,dx
-\frac1{(b_1-a_1)(b_2-a_2)}\int_{a_1}^{b_1}\int_{a_2}^{b_2}f\left(x,y,\frac{a_3+b_3}{2}\right)dy\,dx\\\\
+\frac1{(b_1-a_1)}\int_{a_1}^{b_1}f\left(x,\frac{a_2+b_2}{2},\frac{a_3+b_3}{2}\right) dx
+\frac1{(b_2-a_2)}\int_{a_2}^{b_2}f\left(\frac{a_1+b_1}{2},y,\frac{a_3+b_3}{2}\right)dy\\\\
+\frac1{(b_3-a_3)}\int_{a_3}^{b_3}f\left(\frac{a_1+b_1}{2},\frac{a_2+b_2}{2},z\right)dz-f\left(\frac{a_1+b_1}{2},\frac{a_2+b_2}{2},\frac{a_3+b_3}{2}\right)\geq 0.
\end{multline*}
\end{Thm}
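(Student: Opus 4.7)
The plan is to specialize the general first Hermite-Hadamard inequality from the preceding theorem to $d=3$ with $\mathbf n=(2,2,2)$, and then to expand the resulting alternating sum over all eight subsets of $\{1,2,3\}$.

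First I would observe that the classical one-dimensional Hermite-Hadamard inequality recalled in Proposition~\ref{prop:prop25} gives the convex ordering
\[\delta_{(a_i+b_i)/2}\leq_{1\mhyphen cx}\frac{1}{b_i-a_i}\chi_{[a_i,b_i]}(x)\,dx\]
for each $i=1,2,3$. Choosing $X_i$ to be the deterministic midpoint $(a_i+b_i)/2$ and $Y_i$ to be uniformly distributed on $[a_i,b_i]$, the hypothesis $X_i\leq_{(n_i-1)\mhyphen cx}Y_i$ of Theorem~\ref{thm:thm28bisa} is therefore satisfied, since $n_i-1=1$.

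Next I would invoke the alternating-sum inequality \eqref{eq_eqhermite}, namely
\[\sum_{A\subset\{1,2,3\}}(-1)^{|A|}\mathbb E f(Z_{1,A},Z_{2,A},Z_{3,A})\geq 0,\]
where by \eqref{eq:eqmuz} the coordinate $Z_{i,A}$ equals the midpoint $(a_i+b_i)/2$ when $i\in A$ and is uniform on $[a_i,b_i]$ otherwise. I would then enumerate the eight subsets of $\{1,2,3\}$: the empty set contributes, with sign $+$, the normalized triple integral of $f$; each singleton $A=\{k\}$ contributes, with sign $-$, the normalized double integral of $f$ with its $k$-th argument fixed at $(a_k+b_k)/2$; each pair $A=\{k,l\}$ contributes, with sign $+$, the normalized single integral in the remaining variable with the two selected arguments fixed at their respective midpoints; and $A=\{1,2,3\}$ contributes, with sign $-$, the value of $f$ at the central point $((a_1+b_1)/2,(a_2+b_2)/2,(a_3+b_3)/2)$. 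Collecting these eight summands reproduces the displayed inequality verbatim.

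The only real obstacle is the careful bookkeeping of the eight terms, their signs $(-1)^{|A|}$, and the corresponding normalization factors $1/\prod_{i\notin A}(b_i-a_i)$. All the substantive analytic content has already been established in Theorem~\ref{thm:thm28bisa} and in the classical one-dimensional Hermite-Hadamard inequality invoked above, so no additional analysis is required.
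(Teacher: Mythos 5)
Your proposal is correct and follows exactly the paper's route: the paper also obtains this theorem by combining Proposition~\ref{prop:prop25} (which gives $\delta_{(a_i+b_i)/2}\leq_{1\mhyphen cx}\frac{1}{b_i-a_i}\chi_{[a_i,b_i]}(x)\,dx$) with Theorem~\ref{thm:thm28bisa} for $\mathbf n=(2,2,2)$, and then writing out the alternating sum \eqref{eq_eqhermite} over the eight subsets of $\{1,2,3\}$ with the independent coordinates $Z_{i,A}$ distributed as in \eqref{eq:eqmuz}. Your bookkeeping of signs and normalization factors matches the displayed inequality, so nothing further is needed.
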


From Proposition \ref{prop:prop25} and Theorem \ref{thm:thm28bisa}, we obtain the following Jensen type inequalities for box-$\mathbf{n} $-convex functions.
\begin{Thm}\label{thm:thmJensen}
Let $X_i$ be $I_i$ valued random variables, $i=1,\ldots,d$.  
Then, for all continuous box-$(2,\ldots,2)$-convex functions $f\colon \mathbf{I }\to\mathbb R$, we have \textbf{the Jensen inequality}
\begin{equation*}
\sum_{ A\subset \{1,\ldots , d  \}} (-1)^{|A|} \E f(Z_{1,A}, \ldots, Z_{d,A} )\geq 0, 
\end{equation*}
where $Z_{1,A}, \ldots, Z_{d,A}  $ are independent random variables such that
\begin{equation*}
\mu_{Z_{i,A}}=
 \left\{
\begin{aligned}
 \mu _{X_i}\quad&\ \text{if }i\notin A,\\
 \delta_{\:\E X_i}\quad&\ \text{if } i \in A.
\end{aligned}
\right.
 \end{equation*}

\end{Thm}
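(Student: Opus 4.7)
The plan is to derive Theorem~\ref{thm:thmJensen} as a direct specialization of Theorem~\ref{thm:thm28bisa}, invoking the classical one-dimensional Jensen inequality (Proposition~\ref{prop:prop25a}) coordinate by coordinate. First I would substitute, in the hypotheses of Theorem~\ref{thm:thm28bisa}, the random variable $X_i$ there by the degenerate random variable concentrated at $\E X_i$ (call it $\widetilde X_i$), and set $Y_i := X_i$. Since $\mathbf n = (2,\ldots,2)$, the required one-dimensional ordering is $\widetilde X_i \leq_{1\mhyphen cx} X_i$, which is precisely the content of Proposition~\ref{prop:prop25a}: $\delta_{\E X_i} \leq_{1\mhyphen cx} \mu_{X_i}$. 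The integrability hypothesis $\E|\widetilde X_i|^{n_i-1} = |\E X_i| < \infty$ is automatic once $\E X_i$ is well defined, and $\E|X_i|^{n_i-1} = \E|X_i| < \infty$ follows from the existence of $\E X_i$ (or is implicit in the statement by the ``provided the expectations exist'' caveat).

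Next I would match the notation. Under the above substitution, the random variables $Z_{i,A}$ appearing in Theorem~\ref{thm:thm28bisa} have distributions
\[
\mu_{Z_{i,A}} = \begin{cases} \mu_{Y_i} = \mu_{X_i} & \text{if } i \notin A, \\ \mu_{\widetilde X_i} = \delta_{\E X_i} & \text{if } i \in A, \end{cases}
\]
which coincides with the definition \eqref{eq:eqmuz} as it appears in Theorem~\ref{thm:thmJensen}. Therefore the conclusion of Theorem~\ref{thm:thm28bisa},
\[
\sum_{A \subset \{1,\ldots,d\}} (-1)^{|A|} \E f(Z_{1,A},\ldots,Z_{d,A}) \geq 0,
\]
is exactly the asserted Jensen inequality for continuous box-$(2,\ldots,2)$-convex functions $f\colon \mathbf I \to \mathbb R$.

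I expect no serious obstacle: the entire content has been packaged into Theorem~\ref{thm:thm28bisa}, and the only ``new'' ingredient is recognizing the classical Jensen inequality as the relevant one-dimensional convex ordering $\delta_{\E X_i} \leq_{1\mhyphen cx} \mu_{X_i}$. The only minor point worth stating explicitly in the proof is the independence structure: in Theorem~\ref{thm:thm28bisa} the $Z_{i,A}$ are taken independent across $i$ for each fixed $A$, so the expectations on the right factor as products of one-dimensional integrals against $\mu_{X_i}$ or $\delta_{\E X_i}$, and this is what yields a clean probabilistic reformulation. No additional regularization of $f$ beyond continuity is needed, since Theorem~\ref{thm:thm28bisa} already assumes continuous box-$\mathbf n$-convexity.
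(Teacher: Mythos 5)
Your proposal is correct and follows exactly the route the paper intends: Theorem~\ref{thm:thmJensen} is obtained by specializing Theorem~\ref{thm:thm28bisa} with $Y_i:=X_i$ and the degenerate variable $\delta_{\E X_i}$ in place of $X_i$, the hypothesis $\delta_{\E X_i}\leq_{1\mhyphen cx}\mu_{X_i}$ being precisely the classical Jensen inequality of Proposition~\ref{prop:prop25a}. The identification of the $Z_{i,A}$ distributions and the integrability remarks are exactly as the paper leaves implicit, so there is nothing to add.
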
 
For $d=3$, we obtain the following Jensen inequality.
\begin{Thm}\label{thm:thmJensen3}
Let $X,Y,Z$ be $I_1,I_2,I_3$ valued random variables, respectively.  
Then, for all continuous box-$(2,2,2)$-convex functions $f\colon \mathbf{I }\to\mathbb R$, we have \textbf{the Jensen inequality}
\begin{multline*}
\E f(X,Y,Z )
-\E f(\E X,Y,Z )-\E f(X,\E Y,Z )-\E f(X,Y,\E Z )\\
+\E f(X,\E Y,\E Z )+\E f(\E X ,Y,\E Z )+\E f(\E X,\E Y,Z )-f(\E X,\E Y,\E Z )\geq 0.
\end{multline*}
\end{Thm}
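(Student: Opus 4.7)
The approach is to specialize Theorem~\ref{thm:thmJensen} to the case $d = 3$, setting $X_1 = X$, $X_2 = Y$, $X_3 = Z$, and then enumerate all eight subsets $A \subseteq \{1,2,3\}$. For each such $A$, Theorem~\ref{thm:thmJensen} prescribes an independent triple $(Z_{1,A}, Z_{2,A}, Z_{3,A})$ whose $i$th component has distribution $\mu_{X_i}$ when $i \notin A$ and is degenerate at $\E X_i$ when $i \in A$. Each contribution $(-1)^{|A|}\E f(Z_{1,A}, Z_{2,A}, Z_{3,A})$ is then exactly one of the summands appearing on the left-hand side of the claimed inequality.

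The verification is a matter of bookkeeping. I would list the eight cases as follows: $A = \emptyset$ gives $+\E f(X,Y,Z)$; the three singletons $\{1\}, \{2\}, \{3\}$ give $-\E f(\E X, Y, Z)$, $-\E f(X, \E Y, Z)$, and $-\E f(X, Y, \E Z)$ respectively; the three pairs $\{2,3\}, \{1,3\}, \{1,2\}$ give $+\E f(X, \E Y, \E Z)$, $+\E f(\E X, Y, \E Z)$, and $+\E f(\E X, \E Y, Z)$; finally $A = \{1,2,3\}$ gives $-f(\E X, \E Y, \E Z)$. Adding these with the signs $(-1)^{|A|}$ reproduces precisely the expression on the left-hand side of the displayed inequality, so Theorem~\ref{thm:thmJensen} yields the conclusion.

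The main point to flag, rather than an obstacle, is interpretation: Theorem~\ref{thm:thmJensen} produces the variables $Z_{i,A}$ as independent, so the term $\E f(X,Y,Z)$ in the statement of Theorem~\ref{thm:thmJensen3} is to be read as the expectation with respect to the product measure $\mu_X \otimes \mu_Y \otimes \mu_Z$ (this is the $A = \emptyset$ contribution by construction). The integrability condition $\E|X|, \E|Y|, \E|Z| < \infty$ required by the specialization $\mathbf{n} = (2,2,2)$ of Theorem~\ref{thm:thmJensen} is implicit in the appearance of $\E X$, $\E Y$, $\E Z$ in the statement, and the existence of the various expectations of $f$ is covered by the standard ``provided the expectations exist'' proviso inherited from the general result.
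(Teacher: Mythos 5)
Your proposal is correct and matches the paper's own route exactly: the paper presents Theorem~\ref{thm:thmJensen3} as the immediate specialization of Theorem~\ref{thm:thmJensen} to $d=3$, with the eight subsets $A\subset\{1,2,3\}$ producing precisely the eight signed terms you list. Your remark that the expectations must be read with respect to the product measure (i.e., $X,Y,Z$ treated as independent) is the right interpretive caveat and is consistent with how the $Z_{i,A}$ are defined in the general theorem.
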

If the random variables have discrete distributions, then the above Jensen inequality has the following form.
\begin{Thm}\label{thm:thmJensen2}
Let $I_1,I_2,I_3$ be real intervals. Let $M,N,L>1$. $x_1,\ldots, x_M\in I_1$, $y_1,\ldots, y_N\in I_2$, $z_1,\ldots, z_L\in I_3$, 
$\alpha_1,\ldots, \alpha_M,\beta_1,\ldots, \beta_N,\gamma_1,\ldots, \gamma_L\geq 0$, 
$\alpha_1+\ldots+\alpha_M=\beta_1+\ldots+\beta_N=\gamma_1+\ldots + \gamma_L=1$, 
$\bar{x}=\alpha_1 x_1+\ldots+\alpha_M x_M$, $\bar{y}=\beta_1 y_1+\ldots+\beta_N y_N$, $\bar{z}=\gamma_1 z_1+\ldots + \gamma_L z_L$.  Then for all continuous box-$(2,2,2)$-convex functions
\begin{multline*}
\sum_{i=1}^{M}\sum_{j=1}^{N}\sum_{k=1}^{L}\alpha_i \beta_j \gamma_k f(x_i ,y_j,z_k)
-\sum_{j=1}^{N}\sum_{k=1}^{L}\beta_j \gamma_k f(\bar{x} ,y_j,z_k)-\sum_{i=1}^{M}\sum_{k=1}^{L}\alpha_i  \gamma_k f(x_i ,\bar{y},z_k)
-\sum_{i=1}^{M}\sum_{j=1}^{N}\alpha_i \beta_j  f(x_i ,y_j,\bar{z})
\\
+\sum_{i=1}^{M}\alpha_i   f(x_i ,\bar{y},\bar{z})
+\sum_{j=1}^{N}\beta_j  f(\bar{x} ,y_j,\bar{z})+\sum_{k=1}^{L} \gamma_k f(\bar{x} ,\bar{y},z_k)-f(\bar{x} ,\bar{y},\bar{z})\geq 0.
\end{multline*}
\end{Thm}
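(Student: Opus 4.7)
The plan is to obtain Theorem~\ref{thm:thmJensen2} as a direct discrete specialization of Theorem~\ref{thm:thmJensen3}. First I would introduce independent random variables $X, Y, Z$ with discrete distributions supported on $\{x_1,\ldots,x_M\}$, $\{y_1,\ldots,y_N\}$, $\{z_1,\ldots,z_L\}$, defined by $P(X=x_i)=\alpha_i$, $P(Y=y_j)=\beta_j$, $P(Z=z_k)=\gamma_k$. The assumptions that $\alpha_i,\beta_j,\gamma_k\geq 0$ and that each family sums to $1$ guarantee these are genuine probability measures on $I_1,I_2,I_3$ respectively, so $X,Y,Z$ take values in the prescribed intervals.

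Next I would compute the elementary expectations: by definition $\E X=\sum_{i=1}^M\alpha_i x_i=\bar x$, and similarly $\E Y=\bar y$, $\E Z=\bar z$. Independence of $X,Y,Z$ together with the product structure of their joint law yields, for any bounded Borel function $g\colon\mathbf I\to\mathbb R$,
\begin{equation*}
\E g(X,Y,Z)=\sum_{i=1}^M\sum_{j=1}^N\sum_{k=1}^L\alpha_i\beta_j\gamma_k\, g(x_i,y_j,z_k),
\end{equation*}
and analogous formulas when any subset of $\{X,Y,Z\}$ is replaced by the constants $\bar x,\bar y,\bar z$ (the corresponding indices simply drop out of the sum).

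Having fixed $f$ a continuous box-$(2,2,2)$-convex function, Theorem~\ref{thm:thmJensen3} applies to $X,Y,Z$ as above (all relevant expectations exist because the distributions are finitely supported and $f$ is continuous, hence bounded, on any bounded set containing the support). Substituting the explicit formulas from the previous paragraph into each of the eight terms of the inequality from Theorem~\ref{thm:thmJensen3} immediately produces the eight terms on the left-hand side of the claimed inequality, with the correct alternating signs.

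There is no real obstacle here, since the statement is a mechanical translation of the probabilistic form into sums; the only thing to be careful about is keeping track of which variables get replaced by their means (i.e.\ indexed by the subset $A\subset\{1,2,3\}$) so that each term in Theorem~\ref{thm:thmJensen3} lines up with the corresponding summand of Theorem~\ref{thm:thmJensen2}. Once that bookkeeping is done, the conclusion follows at once.
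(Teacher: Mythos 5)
Your proposal is correct and matches the paper's intent exactly: the paper states Theorem~\ref{thm:thmJensen2} as the discrete specialization of Theorem~\ref{thm:thmJensen3} (introduced with ``If the random variables have discrete distributions, then the above Jensen inequality has the following form'') and gives no further argument, so your construction of independent finitely supported $X,Y,Z$ with $\E X=\bar x$, $\E Y=\bar y$, $\E Z=\bar z$ and the term-by-term translation of the eight expectations into the eight sums is precisely the intended proof.
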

Now we recall the Ra\c sa inequality. Its probabilistic version has the following form (see \cite{Rasa2014b}, \cite{MRW2017}) 
\begin{equation*}\label{eq:box17}
(\nu-\mu)^{*2}\geq_{1\mhyphen cx} 0 , 
\end{equation*}
where $\mu=B(n,x)$ and $\nu=B(n,y)$ are the binomial distributions with parameters $n\in\mathbb N$ and $x,y\in[0,1]$. By $(\nu-\mu)^{*2}$ we mean $(\nu-\mu)*(\nu-\mu)$, where $*$ is the convolution of signed measures. In \cite{KomRaj2018,KomRaj2021,KomRaj2022}, we obtained some useful necessary and sufficient conditions for Borel measures $\mu$ and $\nu$ to satisfy
the following generali\-zed Ra\c{s}a inequality, involving $q$-th convolutional power of the signed measure $\nu-\mu$:
$$(\nu-\mu)^{*q}\geq_{(q-1)\mhyphen cx} 0 , \quad q\geq 2.$$
In \cite{KomRaj2023}, we investigated the generali\-zed Ra\c{s}a inequality for box-$(m,n) $-convex fun\-ctions. In this paper, we study  the generali\-zed Ra\c{s}a inequality for box-$\mathbf{n}$-convex functions.
We will need two lemmas. 
\begin{Lem}[\cite{KomRaj2022}, Lemma 5, p.5]\label{lem:box7}
Let $\tau_1, \ldots ,\tau_n$ be signed measures on $\R$ with finite variation, such that $\tau_i(\R)=0$ and $\int_{-\infty}^{\infty} |x|^{n-1} |\tau_i|(dx)<\infty$, $i=1,\ldots, n$, Then
\begin{itemize}
\item[{\rm\textbf{(a)}}] 
$\tau_1* \ldots *\tau_n (\R)=0 $,
\medskip
\item[{\rm\textbf{(b)}}]
$\int_{-\infty}^{\infty} x^k \tau_1* \ldots *\tau_{n} (dx)=0$ for all integers $0<k<n$.\end{itemize}
\end{Lem}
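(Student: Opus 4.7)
The plan is to compute the integrals $\int x^k\,d(\tau_1*\cdots*\tau_n)(x)$ directly via the definition of convolution and the multinomial theorem, and to exploit the hypothesis $\tau_i(\mathbb R)=0$ through a pigeonhole argument on the exponents. Part (a) is almost free: the convolution of signed measures with finite variation satisfies $(\tau_1*\cdots*\tau_n)(\mathbb R)=\prod_{i=1}^n\tau_i(\mathbb R)=0$, so the entire work lies in part (b), of which (a) is also the $k=0$ instance.

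First I would verify that every moment that appears is finite. Since $0<k<n$ and $0\le k_i\le k\le n-1$, one has the pointwise estimate $|x|^{k_i}\le 1+|x|^{n-1}$, and hence $\int_{\mathbb R}|x|^{k_i}\,d|\tau_i|(x)\le |\tau_i|(\mathbb R)+\int_{\mathbb R}|x|^{n-1}\,d|\tau_i|(x)<\infty$ by the hypotheses. This also gives $\int_{\mathbb R^n}|x_1+\dots+x_n|^k\,d|\tau_1|(x_1)\cdots d|\tau_n|(x_n)<\infty$ after expanding $|x_1+\dots+x_n|^k\le (|x_1|+\dots+|x_n|)^k$ by the multinomial formula, which justifies the use of Fubini's theorem for the signed measures $\tau_i$ (applied separately to their Jordan components).

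Next I would carry out the expansion. By the definition of convolution and Fubini,
\begin{equation*}
\int_{\mathbb R} x^k\,d(\tau_1*\cdots*\tau_n)(x)
=\int_{\mathbb R^n}(x_1+\dots+x_n)^k\,d\tau_1(x_1)\cdots d\tau_n(x_n)
=\sum_{\substack{k_1+\dots+k_n=k\\ k_i\ge 0}}\binom{k}{k_1,\dots,k_n}\prod_{i=1}^n\int_{\mathbb R}x_i^{k_i}\,d\tau_i(x_i).
\end{equation*}
Now the pigeonhole step: since $k<n$ and the nonnegative integers $k_1,\dots,k_n$ sum to $k$, at least one index $i_0$ must satisfy $k_{i_0}=0$. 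For that index, $\int_{\mathbb R} x_{i_0}^{0}\,d\tau_{i_0}(x_{i_0})=\tau_{i_0}(\mathbb R)=0$, and consequently the whole summand vanishes. Every multinomial term is zero, so the integral is zero, proving (b). Taking $k=0$ (or invoking the multiplicativity of total mass directly) yields (a).

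There is really no serious obstacle beyond the bookkeeping for integrability; the only point that requires a moment's care is confirming Fubini's theorem applies to the $n$-fold signed-measure integral, which is why I would state the absolute bound $\int |x_1+\dots+x_n|^k\,d|\tau_1|\otimes\cdots\otimes d|\tau_n|<\infty$ explicitly before swapping the integrations.
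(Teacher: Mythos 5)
Your proof is correct: the multinomial expansion after Fubini, combined with the pigeonhole observation that $k_1+\dots+k_n=k<n$ forces some $k_{i_0}=0$ so that the factor $\tau_{i_0}(\mathbb R)=0$ annihilates every term, is exactly the standard argument, and your integrability bounds ($|x|^{k_i}\le 1+|x|^{n-1}$ together with the finiteness of $|\tau_i|$) justify the interchange of integrals. The paper itself gives no proof, citing Lemma~5 of \cite{KomRaj2022} instead, but your argument is the natural one and matches what that reference does.
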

\begin{Lem}[\cite{KomRaj2022}, p.7]\label{lem:box8}
Let $\tau_1, \ldots ,\tau_q$ be signed measures on $\R$ with finite variation, such that $\tau_i(\R)$   $=0$, $i=1,\ldots, q$.
 Then for all $A\in \R$
\begin{equation*}\label{eq:lembox8}
\int_{-\infty}^{\infty} \frac{\bigl(x-A\bigr)^{q-1}_+ }{(q-1)!}\: \tau_1*\ldots*\tau_q(dx)=
\overline F_{\tau_1} * \overline F_{\tau_2}*\ldots * \overline F_{\tau_q}(A),
\end{equation*}
where $\overline F_{\tau_i}(x)=\tau_i([x,\infty))$, $i=1,\ldots, q$, $x\in \R$.
On the right side of the above equality, the symbol $*$ denotes the convolution of functions on $\mathbb R$.
\end{Lem}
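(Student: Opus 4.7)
The plan is to argue by induction on $q$. For the base case $q=1$, the convention $0^{0}=1$ gives $(x-A)_{+}^{0}=\mathbf 1_{[A,\infty)}(x)$, so the left-hand side reduces to $\tau_{1}([A,\infty))=\overline F_{\tau_{1}}(A)$ by the very definition of $\overline F_{\tau_1}$.

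For the inductive step, write $\tau_{1}*\cdots*\tau_{q}=\sigma*\tau_{q}$ with $\sigma=\tau_{1}*\cdots*\tau_{q-1}$, and unfold the definition of the convolution of signed measures:
\begin{equation*}
\int_{\mathbb R}\frac{(x-A)_{+}^{q-1}}{(q-1)!}\,(\sigma*\tau_{q})(dx)
=\int_{\mathbb R}\int_{\mathbb R}\frac{(y+z-A)_{+}^{q-1}}{(q-1)!}\,d\sigma(y)\,d\tau_{q}(z).
\end{equation*}
The key elementary identity
\begin{equation*}
\frac{(y+z-A)_{+}^{q-1}}{(q-1)!}=\int_{A-z}^{\infty}\frac{(y-s)_{+}^{q-2}}{(q-2)!}\,ds
\end{equation*}
(which is verified by direct integration in the cases $y\geq A-z$ and $y<A-z$) lets us move the inner integration in $y$ past the $s$-integral. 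Applying the inductive hypothesis to $\sigma=\tau_{1}*\cdots*\tau_{q-1}$ yields
\begin{equation*}
\int_{\mathbb R}\frac{(y-s)_{+}^{q-2}}{(q-2)!}\,d\sigma(y)=\overline F_{\tau_{1}}*\cdots*\overline F_{\tau_{q-1}}(s).
\end{equation*}
Setting $g=\overline F_{\tau_{1}}*\cdots*\overline F_{\tau_{q-1}}$ and interchanging the $s$- and $z$-integrations via Fubini, we obtain
\begin{equation*}
\int_{\mathbb R}\int_{A-z}^{\infty}g(s)\,ds\,d\tau_{q}(z)
=\int_{\mathbb R}g(s)\int_{\mathbb R}\mathbf 1_{\{z\geq A-s\}}\,d\tau_{q}(z)\,ds
=\int_{\mathbb R}g(s)\,\overline F_{\tau_{q}}(A-s)\,ds,
\end{equation*}
which is exactly $\overline F_{\tau_{1}}*\cdots*\overline F_{\tau_{q}}(A)$, completing the induction.

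The main obstacle is the justification of Fubini, since from $\tau_i(\mathbb R)=0$ and finite variation alone, the functions $\overline F_{\tau_i}$ need not be integrable and the convolved measure $\sigma*\tau_q$ need not have compact support. I would handle this by first proving the identity for compactly supported $\tau_{i}$, where all the iterated integrals are finite and Fubini applies trivially, and then extending to the general case by a truncation argument: write $\tau_{i}=\tau_{i}^{+}-\tau_{i}^{-}$ via the Jordan decomposition, approximate each $\tau_{i}^{\pm}$ in total variation by measures supported on $[-N,N]$, and use the fact (guaranteed by the moment assumption $\int |x|^{q-1}\,d|\tau_{i}|<\infty$ provided implicitly by the context in which the lemma is applied, namely Lemma~\ref{lem:box7}) that the monomial $(x-A)_{+}^{q-1}$ is integrable against the convolved signed measure, passing to the limit by dominated convergence on both sides.
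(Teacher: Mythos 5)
The paper does not prove this lemma: it is imported verbatim from \cite{KomRaj2022}, so there is no in-paper argument to compare against. Your induction is the natural proof (and is of the same Fubini-plus-induction type as the cited source): the base case, the identity $\frac{(y+z-A)_+^{q-1}}{(q-1)!}=\int_{A-z}^{\infty}\frac{(y-s)_+^{q-2}}{(q-2)!}\,ds$, the application of the inductive hypothesis to $\tau_1*\cdots*\tau_{q-1}$, and the final interchange producing $\int g(s)\,\overline F_{\tau_q}(A-s)\,ds$ are all correct. Two small points deserve care. First, the base case needs the truncated-power convention $u_+^0=\mathbf 1_{[0,\infty)}(u)$ (so that $\int(x-A)_+^0\,d\tau_1=\tau_1([A,\infty))$); saying only ``$0^0=1$'' would literally give $(x-A)_+^0\equiv 1$, including for $x<A$.

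Second, and more substantively, your proposed rigor patch has a flaw: if you truncate $\tau_i$ to $[-N,N]$ you destroy the hypothesis $\tau_i(\mathbb R)=0$, and then $\overline F_{\tau_i^{(N)}}(x)=\tau_i([-N,N])\neq0$ for all $x<-N$, so $\overline F_{\tau_i^{(N)}}\notin L^1$ and the convergence of the right-hand sides as $N\to\infty$ is not a routine dominated-convergence statement. Either compensate the truncation (e.g.\ subtract a point mass at $0$ of size $\tau_i([-N,N])$ so that total mass zero is preserved) or, more simply, drop the truncation altogether: under the standing moment hypothesis $\int|x|^{q-1}\,d|\tau_i|(x)<\infty$ one checks directly that $|\tau_1|*\cdots*|\tau_q|$ has finite $(q-1)$st moment, that each $\overline F_{\tau_i}$ is bounded and (using $\tau_i(\mathbb R)=0$ together with the first moment bound) integrable, and that every iterated integral in your chain is absolutely convergent; Fubini--Tonelli then applies at each step with no limiting argument. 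With that replacement the proof is complete.
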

\begin{Thm}\label{th:mn_int_19tres}
Let $\mathbf{n} =n_1, \ldots, n_d$, $n_i\geq 2$, $ i=1,\ldots, d$. Let $\mu_i,\nu_i, $ be  probability measures on $\R$, such that $\int_{-\infty}^{\infty} |x|^{n_i-1} \mu_i(dx)<\infty$,
$\int_{-\infty}^{\infty} |x|^{n_i-1} \nu_i(dx)<\infty$ for $i=1,\dots,d$,
Then the following conditions are equivalent:
\begin{itemize}
\item[{\rm\textbf{a)}}] \textbf{the Ra\c{s}a inequality} for continuous box-$\mathbf{n}$-convex functions
$$(\nu_1-\mu_1)^{*n_1}\otimes\ldots\otimes(\nu_d-\mu_d)
^{*n_d}
\succ_{box\mhyphen \mathbf{n}\mhyphen cx}   0,$$

\item[{\rm\textbf{b)}}]

$$
(\overline{F}_{\nu_1}- \overline{F}_{\mu_1} )^{*n_1}(A_1)
\times\ldots\times
(\overline{F}_{\nu_m}- \overline{F}_{\mu_m} )^{*n_d}(A_d)
\geq  0
$$
for all $A_i\in \R$, $i=1,\dots,d$.
\end{itemize} 
\end{Thm}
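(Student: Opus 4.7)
The plan is to reduce the claim to an application of Theorem~\ref{thm:theorem37a} to the product signed measure $\gamma=\gamma_1\otimes\cdots\otimes\gamma_d$, where $\gamma_j=(\nu_j-\mu_j)^{*n_j}$. By definition, condition (a) of the present theorem says precisely that $\gamma\succ_{box\mhyphen\mathbf n\mhyphen cx}0$. A preliminary reduction disposes of the degenerate case: if $\nu_j=\mu_j$ for some index $j$, then $\gamma_j=0$, so both $\gamma$ and the product on the left-hand side of (b) vanish, and the equivalence is trivial. Hence I may assume each $\nu_j-\mu_j$ is non-zero, so each $\gamma_j$ is a non-zero signed finite Borel measure on $\mathbb R$; the finiteness of $\int|x|^{n_j-1}\,d|\gamma_j|(x)$ required by Theorem~\ref{thm:theorem37a} follows from the moment assumptions on $\mu_j,\nu_j$ together with standard convolution estimates.

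Next, I would verify that hypothesis (a) of Theorem~\ref{thm:theorem37a}, namely $\int x^k\,d\gamma_j(x)=0$ for $k=0,1,\dots,n_j-1$, holds automatically. Indeed, since $\mu_j$ and $\nu_j$ are probability measures, the signed measure $\tau_j=\nu_j-\mu_j$ satisfies $\tau_j(\mathbb R)=0$. Taking all $n_j$ factors in Lemma~\ref{lem:box7} to be $\tau_j$, part (a) of that lemma gives $\gamma_j(\mathbb R)=0$ (the $k=0$ moment), and part (b) yields $\int x^k\,d\gamma_j(x)=0$ for $0<k<n_j$, covering the remaining moments.

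It then suffices to translate hypothesis (b) of Theorem~\ref{thm:theorem37a} into statement (b) of the present theorem. That hypothesis reads
$$\prod_{j=1}^d\int_{\mathbb R}\frac{(x_j-u_j)_+^{n_j-1}}{(n_j-1)!}\,d\gamma_j(x_j)\geq0\qquad\text{for all }(u_1,\dots,u_d)\in\mathbf I.$$
By Lemma~\ref{lem:box8} applied with $q=n_j$ and all $q$ factors equal to $\nu_j-\mu_j$, each factor above equals $(\overline F_{\nu_j-\mu_j})^{*n_j}(u_j)$, where $*$ now denotes convolution of functions. Since $\overline F_{\nu_j-\mu_j}=\overline F_{\nu_j}-\overline F_{\mu_j}$, the product becomes exactly the expression in (b) of the present theorem, completing the chain of equivalences.

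I do not expect a serious obstacle: the heavy lifting has already been carried out in Theorem~\ref{thm:theorem37a} and in Lemmas~\ref{lem:box7} and~\ref{lem:box8}. The only points requiring modest care are the degenerate case $\nu_j=\mu_j$ for some $j$, and checking that the iterated convolutions $\gamma_j$ inherit enough integrability from $\mu_j,\nu_j$ so that Theorem~\ref{thm:theorem37a} is applicable; both are routine.
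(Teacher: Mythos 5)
Your proposal is correct and follows essentially the same route as the paper: reduce to Theorem~\ref{thm:theorem37a} applied to the product measure $\gamma_1\otimes\cdots\otimes\gamma_d$ with $\gamma_j=(\nu_j-\mu_j)^{*n_j}$, verify the vanishing moments via Lemma~\ref{lem:box7}, and convert condition (b) via Lemma~\ref{lem:box8}. Your explicit treatment of the degenerate case $\nu_j=\mu_j$ (needed because Theorem~\ref{thm:theorem37a} assumes the factors are non-zero) is a small point of rigor that the paper's proof passes over silently, but it does not change the substance of the argument.
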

\begin{proof}
Let $\tau_i=\nu_i-\mu_i$, $i=1,\ldots,d$. Then $\tau_i$  are signed measure such that $\tau_i(\R)=0$, $i=1,\ldots, d$,  thus by Lemma \ref{lem:box7}
, it follows that for $\gamma_i=\tau_i^{*n_i} $
$$\int_{-\infty}^{\infty} x_i^k \gamma_{i} (dx_i)=0, \quad  k=0,1,\ldots, n_i-1,\quad i=1,\ldots, d.$$

Then, by Theorem \ref{thm:theorem37a}, we conclude that condition a)
is equivalent to 
\begin{equation*}\label{eq:eq107bisbis}
\prod_{j=1}^{d}\int_{I_j}\frac{(x_j-A_j)_+^{n_j-1}}{(n_j-1)!}\:d\gamma_j(x_j)\geq 0
\end{equation*}
for all $(A_1, \ldots, A_d)\in \mathbf{I}$. Taking into account, that by Lemma \ref{lem:box8},
$$  
\int_{I_j}\frac{(x_j-A_j)_+^{n_j-1}}{(n_j-1)!}\:d\gamma_j(x_j)=(\overline F_{\tau_j})^{*n_j}(A_j),\quad j=1,\ldots, d,
$$
the theorem is proved.
\end{proof}

Similarly to \cite{KomRaj2023}, we define strongly box-$\mathbf{n} $-convex functions.
\begin{defin}
We say that a function $f\colon\mathbf I\to\mathbb R$ is strongly box-$\mathbf{n} $-convex functions with modulus $C\geq 0 $, if the function $g\colon\mathbf I\to\mathbb R$ given by the formula $g(x_1, \ldots, x_d) =f(x_1, \ldots, x_d)-C x_1^{n_1} \ldots x_d^{n_d}$ $((x_1, \ldots, x_d)\in  \mathbf I)$ is box-$\mathbf{n} $-convex.
\end{defin}

Then, using the methods presented above and in \cite{KomRaj2023}, one can prove other the Ra\c{s}a, Jensen and Hermite-Hadamard inequalities for box-$\mathbf{n} $-convex functions as well as for strongly box-$\mathbf{n} $-convex functions.

\end{document}